\numberwithin{equation}{section}
\newtheorem{Theo}{Theorem}[section]
\newtheorem{lem}[Theo]{Lemma}
\newtheorem{prop}[Theo]{Proposition}
\theoremstyle{plain}
\theoremstyle{definition}
\newtheorem{defi}[Theo]{Definition}
\theoremstyle{remark}
\newtheorem*{rema*}{Remark}
\newcommand{\NN}{\mathbb{N}}
\newcommand{\RR}{\mathbb{R}}
\newcommand{\Div}{\textnormal{div}}
\newcommand{\DD}{\textnormal{D}}
\newcommand{\supp}{\textnormal{supp }}
\author[A.Rahmani]{Anis Rahmani}
\address{Former-Student of Department of Mathematics, University of Batna 2, Mostefa Ben
Boula\"id, Fesdis, Batna 05078, Algeria.}
\email{rahmanianis58@gmail.com}
\author[A. Mennouni]{Abdelaziz Mennouni}
\address{Department of Mathematics, LTM, University of Batna 2, Mostefa Ben Boula\"id, Fesdis, Batna 05078, Algeria.}
\email{a.mennouni@univ-batna2.dz}
\keywords{Fractional Magnetic B\'enard,  MHD system,  Boussinesq system, Global well-posedness, Besov space}
\subjclass[2000]{35K15, 35K55, 35K65,35B40.}
\begin{document}

\title{New approach in the Besov space to the existence and uniqueness of solutions of the D-dimensional fractional magnetic Bénard system without thermal diffusion}

\date{}
\maketitle
\begin{abstract}
This work investigates the existence and uniqueness of local weak solutions for the d-dimensional $(d \geq 2)$ fractional magnetic B\'enard system without thermal diffusion, integrating the Bénard equation and MHD system. For $\kappa = 0$ and $1 \leq \alpha=\beta < 1 + \dfrac{d}{4}$, we establish that any starting conditions $(u_{0},B_{0})\in  B_{2,1}^{1+\frac{d}{2}-2\alpha}(\mathbb{R}^{d})$ and $\theta_{0}\in  B_{2,1}^{1+\frac{d}{2}-\alpha}(\mathbb{R}^{d})$.
\end{abstract}
\tableofcontents
\section{Introduction}
\par Nonlinear partial differential equations are used to explain scientific contexts in several domains (e.g., \cite{BTM,BTM23,BTM24,OM24}).
\par Driven by the discussions and outcomes mentioned above, we aim to showcase the reality and distinctiveness of the subsequent d-dimensional fractional magnetic Bénard system characterized by the absence of thermal diffusion, i.e., $\kappa =0$:
The Bénard system with magnetic properties The article cited, Mte, expands upon the traditional Bénard convection phenomenon by introducing the influence of a magnetic field. Classic Bénard convection occurs when a fluid layer is heated from below and chilled from above, resulting in a temperature gradient. This gradient induces convection, which is determined by the Rayleigh and Prandtl numbers. In addition, the magnetic field adds intricacy by affecting the convective patterns through the Lorentz force exerted on the electrically conductive fluid. There are many other natural and engineering environments where this paradigm can be used. It is useful in understanding Earth's mantle convection and other processes related to geophysical fluid dynamics, see \cite{Ped}. The study of astrophysics sheds light on internal processes in stars. The model can also be applied in several industrial contexts where magnetohydrodynamic (MHD) flows are crucial, like liquid metal cooling systems and fusion reactors for maintaining the magnets inside.
\par The focus of this study is on determining whether local weak solutions to a d-dimensional magnetic B\'enard model, which may be expressed as\begin{equation}\label{sys1}
\left\{
\begin{aligned}
    & \partial_{t} u + u\cdot\nabla u  =- \mu (-\Delta)^{\alpha}u-\nabla p + B\cdot \nabla B +\theta e_{d},\quad (x,t)\in \mathbb{R}^{d}\times (0,\infty),
     \\
    & \partial_{t} B + u\cdot\nabla B =- \eta (-\Delta)^{\beta}B + B\cdot \nabla u,\quad (x,t)\in \mathbb{R}^{d}\times (0,\infty),
     \\
   & \partial_{t}\theta + u\cdot\nabla\theta =-\kappa(-\Delta)^{\gamma}\theta + u\cdot e_{d},\quad (x,t)\in \mathbb{R}^{d}\times (0,\infty),
  \\
&\Div{u}=0 ~~~~~\Div{B}=0,\quad (x,t)\in \mathbb{R}^{d}\times (0,\infty),
\\
& (u,B,\theta )\mid_{t=0} = (u_{0}, B_{0},\theta _{0}),\quad (x,t)\in \mathbb{R}^{d}\times (0,\infty).
\end{aligned}
\right.
\end{equation}
where $\eta \geq 0 $ represents magnetic diffusion and $\mu \geq 0$ represents kinematic viscosity. The variables $u(x,t),\;B(x,t),
\\\theta(x,t)$ and $p(x,t)$ signify the velocity field, the magnetic field, the temperature, and the pressure, respectively. Additionally, the unit vector in the vertical direction is $e_{d}=(0,0,...,1)$; the acting buoyancy force on the fluid motion is reported by $\theta e_{d}$; and the Rayleigh–Bénard convection in a heated inviscid fluid is simulated by $u\cdot e_{d} $. The Fourier transform defines the fractional Laplacian operator $(-\Delta)^{\alpha} $ as follows:
$$
\widehat{(-\Delta)^{\alpha}} f(\xi)=|\xi|^{2\alpha}\hat{f}(\xi)
$$
and 
$$
\hat{f}(\xi)=\frac{1}{(2\pi)^{\frac{d}{2}}}\int_{\mathbb{R}^{d}}e^{-ix\cdot \xi} f(x)dx.
$$
The conventional magnetic B\'enard system reduces to the B\'enard equation if $B=0$. The Boussinesq equation, which models geophysical phenomena including air fronts and oceanic currents, is created when the Rayleigh-B\'enard convection term $u\cdot e_{d}=0$. This equation is crucial for understanding Rayleigh-B\'enard convection. The Boussinesq equations and the B\'enard equation have been the subject of much research and study due to their significance in mathematics and physics, (see, e.g., \cite{b,CDib,DP,dp,Guo,HKR2,HZ,Tit}).
Once $\theta = 0$, the temperature impact is ignored, transforming the conventional Magnetic B\'enard system into the MHD system. For readers interested in a wealth of fascinating results on the MHD system, please see \cite{YJ,hk,HR}.
The global regularity of solutions is a fundamental issue with the Magnetic Bénard system. When $\alpha=\beta=\gamma=1$, the conventional Magnetic B\'enard system is represented as (\ref{sys1}).
\par There have been advancements in $2D$ and $3D$ magnetic B\'enard systems in recent years. According to \cite{Hp}, solutions retain regularity if the magnetic field has a full Laplacian with appropriate fractional Laplacians for temperature and velocity, if there is no diffusion in the magnetic and temperature fields with logarithmically supercritical dissipation for velocity, or if there is zero dissipation for velocity with a full Laplacian for temperature and fractional Laplacian for the magnetic field.
The paper \cite{BC} looks into the regularity of weak solutions to the 3D magnetic Bénard system. It does this by turning the system into a symmetric form and getting better results with enough integrable regularity conditions for pressure and gradient pressure in Besov spaces that are all the same.
In \cite{Dr}, the author investigated the global regularity problem for the $2D$ magnetic B\'enard problem with fractional dissipation, in which the dissipation terms are $(-\Delta)^{\alpha}u$, $(-\Delta)^{\beta}u$ and $(-\Delta)^{\gamma}\theta$. The author proved that smooth solutions are global in time for the following three important cases, namely $0<\alpha <\frac{1}{2},\beta =1$ and $\gamma =\frac{1}{2}$ or $\alpha =0 ,\beta >1 $ and $\gamma >\frac{1}{2}$ or $\alpha =\frac{1}{2},\beta =1$
and $\gamma =0$. The known worldwide regularity results are much enhanced by these studies. The following articles \cite{che1,che2,che3,HL,ES,MAL} are also relevant to the regularity topic.
\par Encouraged by the previous findings and discussions, we will present the existence and uniqueness for the d-dimensional fractional magnetic B\'enard system (\ref{sys1}) with zero thermal diffusion ie. $\kappa =0$.
\begin{Theo}\label{Th1}
    Let $d\geq 2 $. Consider the system  (\ref{sys1}) with $\kappa =0,\; \mu > 0,\; \eta >0$ and $ 1 \leq \alpha=\beta < 1 +\dfrac{d}{4}$. Assume the initial data $(u_{0},B_{0},\theta_{0})$ satisfy
\begin{equation*}
    \begin{aligned}
        (u_{0},B_{0})&\in  B_{2,1}^{1+\frac{d}{2}-2\alpha}(\mathbb{R}^{d}),\;\;\;\; \theta_{0}\in  B_{2,1}^{1+\frac{d}{2}-\alpha}(\mathbb{R}^{d})\quad\mbox{and} \quad\Div{(u_{0})}=\Div{(B_{0})}=0.
    \end{aligned}
\end{equation*}
Then there exists a small enough time $T $ such that the system (\ref{sys1}) has a unique weak solution $(u,B,\theta)$ on $[0,T ]$ satisfying
\begin{equation*}
    \begin{aligned}
  u&\in L^{\infty}(0,T; B_{2,1}^{1+\frac{d}{2}-2\alpha}(\mathbb{R}^{d}))\cap L^{1}(0,T; B_{2,1}^{1+\frac{d}{2}}(\mathbb{R}^{d})),\\
 B&\in L^{\infty}(0,T; B_{2,1}^{1+\frac{d}{2}-2\alpha}(\mathbb{R}^{d}))\cap L^{1}(0,T; B_{2,1}^{1+\frac{d}{2}}(\mathbb{R}^{d})),\\
\theta &\in L^{\infty}(0,T; B_{2,1}^{1+\frac{d}{2}-\alpha}(\mathbb{R}^{d})).
    \end{aligned}
\end{equation*}
\end{Theo}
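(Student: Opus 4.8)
The plan is to remove the pressure with the Leray--Helmholtz projection $\mathbb{P}=\mathrm{Id}-\nabla(-\Delta)^{-1}\mathrm{div}$ and rewrite \eqref{sys1} (with $\kappa=0$ and $\beta=\alpha$) as
\[
\begin{aligned}
&\partial_t u + \mu(-\Delta)^{\alpha} u = \mathbb{P}\bigl(-\,u\cdot\nabla u + B\cdot\nabla B + \theta e_d\bigr),\\
&\partial_t B + \eta(-\Delta)^{\alpha} B = B\cdot\nabla u - u\cdot\nabla B,\\
&\partial_t \theta + u\cdot\nabla\theta = u\cdot e_d,
\end{aligned}
\]
that is, two transport--diffusion equations for $(u,B)$ coupled to a pure transport equation for $\theta$. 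I would seek the solution in the Chemin--Lerner space
\[
E_T=\bigl\{(u,B,\theta):\ u,B\in\widetilde L^\infty_T B^{1+\frac d2-2\alpha}_{2,1}\cap\widetilde L^1_T B^{1+\frac d2}_{2,1},\ \ \theta\in\widetilde L^\infty_T B^{1+\frac d2-\alpha}_{2,1}\bigr\},
\]
which embeds into the space appearing in the statement ($\widetilde L^\infty_T B^s_{2,1}\hookrightarrow L^\infty_T B^s_{2,1}$ and $\widetilde L^1_T B^s_{2,1}=L^1_T B^s_{2,1}$), and construct the solution by a linear iteration: set $u^0=e^{-\mu t(-\Delta)^\alpha}u_0$, $B^0=e^{-\eta t(-\Delta)^\alpha}B_0$, $\theta^0=\theta_0$, let $(u^{n+1},B^{n+1})$ solve the two parabolic equations with the bilinear right-hand sides evaluated at step $n$, and let $\theta^{n+1}$ solve $\partial_t\theta^{n+1}+u^n\cdot\nabla\theta^{n+1}=u^n\cdot e_d$.

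The core is a uniform bound on a small interval. After a Littlewood--Paley localization, the heat-flow estimate $\|\Delta_j e^{-\mu t(-\Delta)^\alpha}f\|_{L^2}\lesssim e^{-c\mu t2^{2\alpha j}}\|\Delta_j f\|_{L^2}$ together with Duhamel's formula shows that the parabolic solution operator maps a source lying in $\widetilde L^1_T B^{1+\frac d2-2\alpha}_{2,1}$ into $\widetilde L^\infty_T B^{1+\frac d2-2\alpha}_{2,1}\cap\widetilde L^1_T B^{1+\frac d2}_{2,1}$, with norm $\lesssim\mu^{-1}$ (resp.\ $\eta^{-1}$) times the source norm, and that the free evolutions of $u_0,B_0$ have small $\widetilde L^1_T B^{1+\frac d2}_{2,1}$-norm for $T$ small. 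The quadratic terms $u\cdot\nabla u$, $B\cdot\nabla B$, $u\cdot\nabla B$, $B\cdot\nabla u$ are controlled by Bony's paraproduct decomposition together with the time interpolation $\|f\|_{L^2_T B^{1+\frac d2-\alpha}_{2,1}}\lesssim\|f\|_{L^\infty_T B^{1+\frac d2-2\alpha}_{2,1}}^{1/2}\|f\|_{L^1_T B^{1+\frac d2}_{2,1}}^{1/2}$, which gives $\|u\cdot\nabla u\|_{L^1_T B^{1+\frac d2-2\alpha}_{2,1}}\lesssim\|u\|_{L^2_T B^{1+\frac d2-\alpha}_{2,1}}^2$ and likewise for the others; here $\alpha\ge1$ is exactly what places the relevant Besov exponents in the admissible range for the product and commutator laws, while $\alpha<1+\tfrac d4$ is precisely the condition $1+\tfrac d2-2\alpha>-1$ needed for the transport-type estimates to apply at the solution regularity. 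The buoyancy term is harmless: since $1+\tfrac d2-\alpha\ge1+\tfrac d2-2\alpha$, one has $\|\mathbb{P}(\theta e_d)\|_{L^1_T B^{1+\frac d2-2\alpha}_{2,1}}\lesssim T\,\|\theta\|_{\widetilde L^\infty_T B^{1+\frac d2-\alpha}_{2,1}}$.

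For the temperature I would use the $B^{1+\frac d2-\alpha}_{2,1}$ transport estimate — admissible since $-\tfrac d2<1+\tfrac d2-\alpha\le1+\tfrac d2$ — in Gronwall form,
\[
\|\theta(t)\|_{B^{1+\frac d2-\alpha}_{2,1}}\le\Bigl(\|\theta_0\|_{B^{1+\frac d2-\alpha}_{2,1}}+\int_0^t\|u(\tau)\|_{B^{1+\frac d2-\alpha}_{2,1}}\,d\tau\Bigr)\exp\Bigl(C\!\int_0^t\|u(\tau)\|_{B^{1+\frac d2}_{2,1}}\,d\tau\Bigr),
\]
and observe that $\int_0^T\|u\|_{B^{1+\frac d2}_{2,1}}\,d\tau=\|u\|_{L^1_T B^{1+\frac d2}_{2,1}}$ and $\int_0^T\|u\|_{B^{1+\frac d2-\alpha}_{2,1}}\,d\tau\lesssim T^{1/2}\|u\|_{L^2_T B^{1+\frac d2-\alpha}_{2,1}}$ are both small once $T$ is small, uniformly over a ball of $E_T$. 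Combining the estimates, for $T$ small the iteration maps a suitable ball of $E_T$ into itself. To pass to the limit I would show that $(u^{n+1}-u^n,B^{n+1}-B^n,\theta^{n+1}-\theta^n)$ is contractive in the weaker space $\widetilde L^\infty_T B^{\sigma}_{2,1}\cap\widetilde L^1_T B^{\sigma+2\alpha}_{2,1}$ for $(u,B)$ and $\widetilde L^\infty_T B^{\sigma}_{2,1}$ for $\theta$ with $\sigma=1+\tfrac d2-2\alpha$ — the differences solve \emph{linear} parabolic/transport equations, so they are estimated by the same product and transport inequalities with a factor that is small with $T$ — extract the limit, verify it solves \eqref{sys1}, and recover the $E_T$-bounds by lower semicontinuity. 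Uniqueness follows from the same difference estimate closed by Gronwall (or an Osgood argument); alternatively one may replace the iteration by a Friedrichs regularization and conclude by compactness.

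The step I expect to be the main obstacle is the uniform-in-$n$ closure of the a priori estimate: the quantities $\|u^n\|_{L^1_T B^{1+\frac d2}_{2,1}}$ and $\|u^n\|_{L^2_T B^{1+\frac d2-\alpha}_{2,1}}$ must simultaneously absorb the growth of $\|\theta^n\|_{B^{1+\frac d2-\alpha}_{2,1}}$ coming from the transport estimate and be themselves controlled by the parabolic estimate for $u^n$ — in other words the feedback between the non-smoothing temperature equation and the smoothing velocity/magnetic equations has to be made to close on an interval independent of $n$. Because $u$ and $B$ sit at the scaling-critical regularity $1+\tfrac d2-2\alpha$ and the summability index of the space is $1$ (the endpoint space $B_{2,1}$), there is essentially no slack in any of these estimates, so keeping precise track of which norms of $u$ are consumed by the $\theta$-transport bound and which remain available for the bilinear estimates is the delicate point.
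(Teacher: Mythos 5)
Your proposal is correct and is essentially the paper's own approach: a successive-approximation scheme estimated in the same Chemin--Lerner spaces $\widetilde L^{\infty}_{T}B_{2,1}^{1+\frac d2-2\alpha}\cap L^{1}_{T}B_{2,1}^{1+\frac d2}$ for $(u,B)$ and $\widetilde L^{\infty}_{T}B_{2,1}^{1+\frac d2-\alpha}$ for $\theta$, closed for small $T$ through the smallness of the $L^{1}_{T}B_{2,1}^{1+\frac d2}$ norms of $u$ and $B$, with $\alpha\ge 1$ and $\alpha<1+\frac d4$ entering exactly where you indicate, and with uniqueness obtained from difference estimates in lower-regularity Besov norms closed by a Gr\"onwall/Osgood argument. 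The only differences are ones of implementation: the paper runs frequency-localized energy estimates via its trilinear dyadic lemma (Lemma \ref{lem2}) instead of Duhamel plus paraproduct product laws, and it passes to the limit by bounding $\partial_{t}(u^{(n)},B^{(n)},\theta^{(n)})$ and invoking the Aubin--Lions--Simon compactness lemma rather than your contraction argument in a weaker norm.
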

\section{Dimensionless Parameters}
\textbf{Rayleigh Number (Ra):}
A dimensionless number that quantifies the buoyancy-driven flow about viscous damping: it controls the start of convection. The existence of a magnetic field can affect the critical Rayleigh number of the magnetic Bénard system.
\begin{eqnarray*}
    \textbf{Ra}=\dfrac{g \beta \Delta T L^{3}}{\nu \kappa}.
\end{eqnarray*}
where $\Delta T $ is the temperature difference across the fluid layer, $L$ is the characteristic length (e.g., the height of the fluid layer), 
$\nu$ is the kinematic viscosity, and $\kappa $ is the thermal diffusivity. 
\\
\textbf{Prandtl Number (Pr):}
\begin{eqnarray*}
    Pr=\dfrac{\nu}{\kappa}.
\end{eqnarray*}
It describes the thickness of the velocity boundary layer in relation to the thermal boundary layer.
\\
\textbf{Hartmann Number (Ha):}
\begin{eqnarray*}
    Ha=\dfrac{B_{0}L \sqrt{\sigma}}{\sqrt{\nu \mu}}
\end{eqnarray*}
where $B_{0}$ is the magnetic field strength, and 
$\sigma $ is the electrical conductivity of the fluid. The Hartmann number measures the relative importance of magnetic forces compared to viscous forces.
\\
\textbf{Magnetic Reynolds Number (Rm):}
\begin{eqnarray*}
    R_{m}=\dfrac{UL}{\eta}.
\end{eqnarray*}
where $U$ is a characteristic velocity, and $\eta $ is the magnetic diffusivity. It measures the relative importance of advection of the magnetic field by the fluid flow to diffusion of the magnetic field.
\begin{rema*}\textbf{Patterns of Stability and Convection} \\
Both the stability and the onset of convection are influenced by the interaction of these dimensionless factors. The critical Rayleigh number $R_{{a}_{c}}$ establishes the beginning of convection. A magnetic field raises $R_{{a}_{c}}$ and stabilizes the flow, as indicated by the Hartmann number. Convection patterns are also influenced by the magnetic field; their strength and orientation determine whether they become more organized or suppressed.

\end{rema*}
\section{Background and tool lemmas}
This part will review the definition of Besov space, some important lemma, and some basic information on the Littlewood-Paley theory.
\begin{defi}[See \cite{GL}]
    Let $\mathcal{S}(R^{d})$ be the space of Schwartz class of rapidly decreasing
functions such that for any $k\in \mathbb{N}$
\begin{eqnarray*}
    \lVert u \rVert_{k,\mathcal{S}}=\sup_{\lvert \alpha \rvert\leq k , x\in \mathbb{R}^{d} } (1+\lvert x \rvert)^{k} \lvert \partial^{\alpha} u(x)\rvert \leq \infty.
\end{eqnarray*}
\end{defi}
\begin{rema*} It is crucial to remember that:
\begin{enumerate}
      \item  The set $\mathcal{S'}$ of temperate distributions is the dual set of $\mathcal{S}$ for the usual pairing.
    \item  the Zygmund operator $\Lambda$, also known as the square root of the Laplacian i.e. related to the fractional Laplacian $(-\Delta)^{\frac{\alpha}{2}}$. Specifically, for $\alpha=1$, we have 
$(-\Delta)^{\frac{1}{2}}=\Lambda$. 
\end{enumerate}
\end{rema*}
Let  $1 \leq p \leq \infty$, denote by $q$ the conjugate of $p$, that is to say
     $$
		\dfrac{1}{p}+\dfrac{1}{q}=1.
		$$
\begin{lem}[See \cite{BM}]
 Assume that $f\in L^{p}(\mathbb{R}^{n})$ and $g\in L^{q}(\mathbb{R}^{n})$ with $ 1 \leq p \leq \infty $. Then 
    \begin{equation*}
        \begin{aligned}
  fg\in L^{1} (\mathbb{R}^{n})\;\; \mbox{and}\;\; \int\limits_{\mathbb{R}^{n}}\lvert f g \rvert dx \leq \lVert f\rVert_{L^{p}(\mathbb{R}^{n})} \lVert g \rVert_{L^{q}(\mathbb{R}^{n})}.
  \end{aligned}
    \end{equation*}
\end{lem}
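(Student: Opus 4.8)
The final statement is the classical Hölder inequality, so the plan is to reduce it to the pointwise Young inequality for products after disposing of the degenerate and endpoint cases. First I would dispatch the trivial situations: if either $\|f\|_{L^p(\mathbb{R}^n)}$ or $\|g\|_{L^q(\mathbb{R}^n)}$ is infinite the claimed bound holds vacuously, while if either norm vanishes then the corresponding factor is zero almost everywhere, forcing $fg = 0$ a.e. and both sides of the inequality to equal zero. This leaves only the case in which both norms are finite and strictly positive.

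Next I would treat the two endpoints $(p,q) = (1,\infty)$ and $(p,q)=(\infty,1)$ directly, since Young's inequality does not apply there. In the first, $\lvert g(x)\rvert \le \|g\|_{L^\infty(\mathbb{R}^n)}$ for a.e.\ $x$, so $\lvert f(x)g(x)\rvert \le \|g\|_{L^\infty(\mathbb{R}^n)}\,\lvert f(x)\rvert$ pointwise a.e.; integrating over $\mathbb{R}^n$ gives $\int \lvert fg\rvert \le \|g\|_{L^\infty(\mathbb{R}^n)}\,\|f\|_{L^1(\mathbb{R}^n)}$, which is the assertion, and the second endpoint is symmetric. In particular $fg$ is integrable.

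For the interior range $1 < p < \infty$, whence $1<q<\infty$, the central tool is Young's inequality: for nonnegative reals $a,b$ one has $ab \le a^p/p + b^q/q$. I would prove this from the concavity of the logarithm, writing $\log(ab) = \tfrac{1}{p}\log a^p + \tfrac{1}{q}\log b^q \le \log\!\left(\tfrac{a^p}{p} + \tfrac{b^q}{q}\right)$ and exponentiating, the cases $a=0$ or $b=0$ being immediate. With this in hand I would normalize by setting $F = f/\|f\|_{L^p(\mathbb{R}^n)}$ and $G = g/\|g\|_{L^q(\mathbb{R}^n)}$, so that $\|F\|_{L^p(\mathbb{R}^n)} = \|G\|_{L^q(\mathbb{R}^n)} = 1$, and apply Young's inequality pointwise with $a = \lvert F(x)\rvert$ and $b=\lvert G(x)\rvert$. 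Integrating the resulting pointwise bound yields $\int \lvert FG\rvert \le \tfrac{1}{p}\int\lvert F\rvert^p + \tfrac{1}{q}\int\lvert G\rvert^q = \tfrac1p + \tfrac1q = 1$. Undoing the normalization by multiplying through by $\|f\|_{L^p(\mathbb{R}^n)}\,\|g\|_{L^q(\mathbb{R}^n)}$ gives $\int\lvert fg\rvert \le \|f\|_{L^p(\mathbb{R}^n)}\,\|g\|_{L^q(\mathbb{R}^n)}$, and finiteness of this integral shows $fg \in L^1(\mathbb{R}^n)$.

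The result is genuinely elementary, and its only real content is Young's inequality; thus the main thing to organize carefully is the case analysis, so that the endpoints $p \in \{1,\infty\}$ are handled apart from the interior range, together with the trivial remark that $fg$ is measurable as a product of measurable functions.
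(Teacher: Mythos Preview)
Your proof is correct and is the standard textbook argument for H\"older's inequality via Young's inequality. The paper itself does not supply a proof of this lemma; it merely states the result with a citation to Brezis's book, where essentially the same normalization-and-Young-inequality argument appears. So your approach matches the reference the paper defers to.
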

 \begin{defi}[See \cite{BM}]
Assume that $f \in L^{p}(\mathbb{R}^{n})$ and $g \in L^{q}(\mathbb{R}^{n})$ with
$$
1 \leq p \leq \infty,\;\;1\leq q \leq \infty
    \;\; \mbox{and}\;\; \dfrac{1}{r}=\dfrac{1}{p}+\frac{1}{q}-1 \geq 0.
		$$
  Then
	$$
 f\ast g\in L^{r}(\mathbb{R}^{n}) \;\;\mbox{and}\;\;\lVert f\ast g \rVert_{L^{r}} \leq \lVert f\rVert_{L^{p}(\mathbb{R}^{n})} \lVert g \rVert_{L^{q}(\mathbb{R}^{n})}.
 $$ 
	\end{defi}
\subsection{Littlewood-Paley theory}
The Littlewood-Paley theorem was proposed by John Edensor Littlewood and Raymmond Paley in the 1930s. John-Michel Bony's 1981 paper on the para-differential calculus, which connects the nonlinear function and the Littlewood-Paley composition, marked a significant breakthrough in the systematic use of this theorem in the analysis of partial differential equations.
\par Let $\chi\in\mathscr{D}(\RR^d)$ be a reference cut-off function, monotonically decaying along rays and so that
\begin{eqnarray*}
\begin{cases}
  \chi\equiv1       & \text{if } \lVert\xi\rVert\le\frac12, \\
	0\le \chi\le1  & \text{if } \frac12\le\lVert\xi\rVert\le1, \\
  \chi\equiv0        & \lVert\xi\rVert\ge1.
  \end{cases}
\end{eqnarray*}
Define $\varphi(\xi)\triangleq\chi(\frac{\xi}{d})-\chi(\xi)$. Note that $\varphi\ge0$ and $$\supp\varphi\subset\mathcal{C}\triangleq\{\xi\in\RR^d:\frac12\le\|\xi\|\le1\}.$$  
Next, we have the following fundamental characteristics: For instance, refer to \cite{HCD,HKR1}.
\begin{prop} Let $\chi$ and $\varphi$ be as above. Then the following assertions are hold.
\begin{enumerate}
\item[(1)] Decomposition of the unity: 
$$
\chi(\xi)+\sum_{q\ge0}\varphi(2^{-k}\xi)=1,\;\;\mbox{for all}\;\;\xi\in\RR^d;
$$
\item[(2)] Almost orthogonality in the sense of $L^2$:
$$
\frac{1}{2}\le\chi^2(\xi)+\sum_{k\ge0}\varphi^2(2^{-k}\xi)\le1,\;\;\mbox{for all}\;\;\xi\in\RR^d.
$$
\end{enumerate}
\end{prop}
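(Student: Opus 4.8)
The plan is to extract both assertions from two elementary features of the cut-offs introduced above: the telescoping structure of the dilated family $\{\varphi(2^{-k}\cdot)\}_{k\ge0}$, which produces the partition of unity in (1), and the fact that at any frequency at most two of the functions $\chi(\xi),\varphi(2^{-k}\xi)$ are nonzero, which combined with (1) pins down the quadratic sum in (2). Nothing beyond the defining properties of $\chi$ and the annular support of $\varphi$ is required.

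For assertion (1), the starting point is the telescoping relation coming from the definition of $\varphi$ together with the dyadic scaling,
\[
\varphi(2^{-k}\xi)=\chi(2^{-(k+1)}\xi)-\chi(2^{-k}\xi),\qquad k\ge0,
\]
so that the partial sums collapse to
\[
\sum_{k=0}^{N}\varphi(2^{-k}\xi)=\chi(2^{-(N+1)}\xi)-\chi(\xi).
\]
Then I would fix $\xi\in\RR^d$ and observe that $\|2^{-(N+1)}\xi\|\le\frac12$ for all sufficiently large $N$, so that $\chi(2^{-(N+1)}\xi)=1$ there because $\chi\equiv1$ on the ball of radius $\frac12$. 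Passing to the limit $N\to\infty$ yields $\sum_{k\ge0}\varphi(2^{-k}\xi)=1-\chi(\xi)$, which rearranges to (1); the same estimate shows the series is locally finite.

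For assertion (2), the upper bound is immediate from nonnegativity: since $\chi(\xi)\in[0,1]$ and $\varphi(2^{-k}\xi)\in[0,1]$, one has $t^2\le t$ for each value, so summing and using (1) gives $\chi^2(\xi)+\sum_{k\ge0}\varphi^2(2^{-k}\xi)\le\chi(\xi)+\sum_{k\ge0}\varphi(2^{-k}\xi)=1$. For the lower bound I would use the supports: $\supp\chi\subset\{\|\xi\|\le1\}$ while $\varphi(2^{-k}\cdot)$ is carried by the dyadic shell $\{2^{k-1}\le\|\xi\|\le2^{k+1}\}$, so any two members of the family with index gap at least two meet only on a set of measure zero. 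Hence at a given $\xi$ at most two of the numbers $\chi(\xi),\{\varphi(2^{-k}\xi)\}_{k\ge0}$ are nonzero; denoting them $a,b\ge0$, assertion (1) forces $a+b=1$, and therefore
\[
\chi^2(\xi)+\sum_{k\ge0}\varphi^2(2^{-k}\xi)=a^2+b^2\ge\frac{(a+b)^2}{2}=\frac12,
\]
the single-term case $a=1,\ b=0$ giving the value $1$.

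The step I expect to be the main obstacle is the support bookkeeping behind the lower bound: one must match the dilation appearing in the definition of $\varphi$ with the dyadic factor $2^{-k}$, check that $\chi$ overlaps (on a set of positive measure) only the first shell, and confirm that non-adjacent shells are essentially disjoint, so that the at-most-two-overlap property holds uniformly in $\xi$. Once this is in place, the two elementary inequalities $t^2\le t$ and $a^2+b^2\ge\tfrac12(a+b)^2$ close the argument with no further work.
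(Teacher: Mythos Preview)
The paper does not actually prove this proposition; it merely states it and directs the reader to the references \cite{HCD,HKR1} (Bahouri--Chemin--Danchin and Miao--Wu--Zhang). Your argument is the standard textbook proof and is correct: the telescoping identity yields (1), and for (2) the combination of the at-most-two-overlap with (1) and the elementary inequalities $t^2\le t$ on $[0,1]$ and $a^2+b^2\ge\tfrac12(a+b)^2$ closes the estimate.

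One small remark: the paper writes $\varphi(\xi)\triangleq\chi(\xi/d)-\chi(\xi)$, which is almost certainly a typo for $\chi(\xi/2)-\chi(\xi)$; with the dimension $d$ in place of $2$ neither the stated support annulus $\{\tfrac12\le\|\xi\|\le1\}$ nor the dyadic decomposition $\sum_{k\ge0}\varphi(2^{-k}\xi)$ would be consistent. Your telescoping step $\varphi(2^{-k}\xi)=\chi(2^{-(k+1)}\xi)-\chi(2^{-k}\xi)$ tacitly uses the corrected definition, which is the right call.
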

Let us review the Littlewood-Paley operator denoted by $\Delta_{k}$.
\begin{defi} For every $u\in\mathcal{S}'(\RR^d)$, setting 
\begin{equation*}
\Delta_{-1}u\triangleq\chi(\DD)u,\;\; \Delta_{k}u\triangleq\varphi(2^{-k}\DD)u\;\;\mbox{if}\;\;k\in\NN,\;\; S_{k}u\triangleq\sum_{j\le k-1}\Delta_{j}u\;\;\mbox{for}\; k\ge 0.
\end{equation*}
\end{defi}
Some properties of $\Delta_q$ and $S_q$ are listed in the following proposition.
\begin{prop} Let $u,v\in\mathcal{S}'(\RR^d)$ we have
\begin{enumerate}
	\item If $\vert p-k\vert\ge2$. Then $\Delta_p\Delta_k u\equiv0$;
	\item If $\vert p-k\vert\ge4$. Then $\Delta_q(S_{p-1}u\Delta_p v)\equiv0$;
		\item The operators $\Delta_k, S_k: L^p\rightarrow L^p$ are uniformly bounded with respect to  $k$ and $p$.
\end{enumerate}
 \end{prop}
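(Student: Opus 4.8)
The plan is to prove all three assertions on the Fourier side, using that $\Delta_k$ and $S_k$ are Fourier multipliers whose symbols $\varphi(2^{-k}\cdot)$ and $\chi(2^{-k}\cdot)$ have explicitly known supports. Parts (1) and (2) reduce to verifying that certain frequency supports are disjoint, so that the relevant product symbol (resp. the spectrum of a product of two functions) vanishes identically; part (3) is an $L^p$ estimate obtained by writing each operator as convolution with a fixed rescaled Schwartz kernel and invoking the Young-type convolution inequality recalled above.

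For part (1), I would first record that, for $k\ge 0$, $\Delta_k u=\varphi(2^{-k}\DD)u$ has spectrum contained in the annulus $2^k\mathcal{C}=\{\xi:2^{k-1}\le\|\xi\|\le 2^k\}$, while $\Delta_{-1}u=\chi(\DD)u$ is supported in the ball $\{\|\xi\|\le 1\}$. The composition $\Delta_p\Delta_k$ is then the multiplier with symbol $\varphi(2^{-p}\xi)\varphi(2^{-k}\xi)$. Assuming $p\ge k+2$ without loss of generality, the two annuli satisfy $2^k\le 2^{k+1}\le 2^{p-1}$ and are therefore disjoint, so the product symbol is identically zero and $\Delta_p\Delta_k u\equiv 0$. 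The cases involving the index $-1$ are handled identically by comparing the ball with the annulus.

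For part (2)—where the outer index should read $q$, so that the hypothesis is $|q-p|\ge 4$—the point is to localise the spectrum of the product $S_{p-1}u\,\Delta_p v$. I would note that $S_{p-1}u=\sum_{j\le p-2}\Delta_j u$ has spectrum in the ball $\{\|\xi\|\le 2^{p-2}\}$, whereas $\Delta_p v$ has spectrum in $\{2^{p-1}\le\|\xi\|\le 2^p\}$. Since the Fourier transform of a product is the convolution of the transforms, the spectrum of $S_{p-1}u\,\Delta_p v$ lies in the sumset of these two sets, which is contained in the annulus $\{2^{p-2}\le\|\xi\|\le \tfrac54\,2^p\}$ centred on frequency $2^p$. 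Applying $\Delta_q$ retains only frequencies in $2^q\mathcal{C}$; comparing the endpoints of the two annuli shows the intersection is empty as soon as $|q-p|\ge 4$ (indeed already for $|q-p|\ge 3$), which forces $\Delta_q(S_{p-1}u\,\Delta_p v)\equiv 0$.

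For part (3), I would write $\Delta_k u=h_k\ast u$ with $h_k(x)=2^{kd}h(2^k x)$ and $h=\mathcal{F}^{-1}\varphi$, and similarly $S_k u=g_k\ast u$ with $g_k(x)=2^{kd}g(2^k x)$, $g=\mathcal{F}^{-1}\chi$. Since $\varphi,\chi\in\mathscr{D}(\RR^d)$, both $h$ and $g$ are Schwartz, hence in $L^1(\RR^d)$, and the scaling identity $\|h_k\|_{L^1}=\|h\|_{L^1}$, $\|g_k\|_{L^1}=\|g\|_{L^1}$ holds for every $k$. The convolution inequality recalled above, applied with exponents $(p,1,p)$, then yields $\|\Delta_k u\|_{L^p}\le\|h\|_{L^1}\|u\|_{L^p}$ and $\|S_k u\|_{L^p}\le\|g\|_{L^1}\|u\|_{L^p}$, with constants independent of both $k$ and $p$; the block $\Delta_{-1}$ is covered by the same estimate with kernel $g$. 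None of the three steps presents a genuine difficulty: the only points demanding care are the spectral-support bookkeeping in part (2)—checking that the sumset really sits in an annulus of the stated width, so that the separation constant is correct—and, in part (3), the scaling identity for the $L^1$ norm of the dilated kernel together with the observation that $h,g\in L^1$ because they are Schwartz. I would also flag the apparent index typo ($k$ for $q$) in part (2) and prove it as $|q-p|\ge 4\Rightarrow\Delta_q(S_{p-1}u\,\Delta_p v)\equiv 0$.
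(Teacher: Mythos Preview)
Your argument is correct and is exactly the standard proof one finds in the references the paper cites. Note, however, that the paper does not supply its own proof of this proposition: it is stated as a known background fact with implicit reference to \cite{HCD,HKR1}, so there is no in-paper argument to compare against. Your treatment of parts (1)--(3) via Fourier-support bookkeeping and the $L^1$-scaling/Young inequality is precisely the route taken in those sources, and your identification of the index typo in part (2) (the hypothesis should involve $q$, not $k$) is also apt.
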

Likewise the homogeneous operators $\dot{\Delta}_{k}$ and $\dot{S}_{k}$ are defined by
\begin{equation}\label{Hom}
\forall{k}\in \mathbb{Z}\quad\dot{\Delta}_{k}=\varphi(2^{k}D)u, \quad \dot{S}_{k}=\sum_{ j\le k-1}\dot{\Delta}_{j}v.
\end{equation}
We shall now review the Besov spaces' definition. To this end, denoted by ${\bf P}$ the set of all polynomials.
\begin{defi} For $(s,p,r)\in\RR\times[1,  +\infty]^2$. The inhomogeneous Besov space $B_{p,r}^s$ (resp. the homogeneous Besov space $\dot{B}_{p,r}^s$) is the set of all tempered distributions $u\in\mathcal{S}^{'}$ (resp. $u\in\mathcal{S}^{'}_{|{\bf P}})$ such that
\begin{equation*}
    \begin{aligned}
        &\Vert u\Vert_{{B}_{p, r}^{s}}\triangleq\Big(2^{qs}\Vert \Delta_{q} u\Vert_{L^{p}}\Big)_{\ell^r}<\infty. \\
&\mbox{resp. }\\
&\Vert u\Vert_{\dot{{B}}_{p, r}^{s}}\triangleq\ \Big(2^{qs}\Vert \dot\Delta_{q} u\Vert_{L^{p}}\Big)_{\ell^r(\mathbb{Z})}<\infty. 
    \end{aligned}
\end{equation*}
\end{defi}
\subsection{Paradifferential calculus}
We can formally divide the product of two tempered distributions $u$ and $v$ into three parts using the well-known {\it Bony's} decomposition \cite{b11}. We will remainder the following definitions:
\begin{defi}
 For a given $u, v\in\mathcal{S}'$ we have the following Bony decomposition:
\begin{eqnarray*}
    uv=T_u v+T_v u+\mathscr{R}(u,v),
\end{eqnarray*}
where
\begin{equation*}
\begin{aligned}
T_u v &=\sum\limits_{k \geq -1} S_{k-1}u \Delta_{k} v,\\
T_v u &=\sum\limits_{k \geq -1} S_{k-1} v\Delta_{k} u,\\
\mathscr{R}(u,v) &=\sum\limits_{k \geq -1} \Delta_{k}u \tilde{\Delta}_{k} v
\end{aligned}
\end{equation*}
and
\begin{eqnarray*}
    \tilde{\Delta}_{k}= \Delta_{k-1}+\Delta_{k}+\Delta_{k+1}.
\end{eqnarray*}
\end{defi}
Further details about the dyadic block $\Delta_{k}$ and the Littlewood-Paley theory are provided in \cite{HCD,HKR1,Lk}.
\par We represent the following Besov spaces of Chemin-Lerner type, initially presented in \cite{HCD}, as the mixed space-time spaces.
\begin{defi}\label{def1}
Let $T>0$ and $(r,p,q,s)\in[1, \infty]^3\times\RR$.  We define the spaces $L^{r}_{T}B_{p,q}^s$ and $\widetilde L^{r}_{T}B_{p,q}^s$ respectively by: 
\begin{equation*}
\begin{aligned}
     & L^{r} (0,T;B_{p,q}^{s})=L^r_{T}B_{p,q}^s\triangleq\Big\{u: [0,T]\to\mathcal{S}^{'}; \Vert u\Vert_{L_{T}^{r}B_{p, q}^{s}}=\big\Vert\big(2^{ks}\Vert \Delta_{k}u\Vert_{L^{p}}\big)_{\ell^{q}}\big\Vert_{L_{T}^{r}}<\infty\Big\},
     \\
     &\tilde{L}^{r}(0,T;B_{p,q}^{s})=\widetilde L^{r}_{T}B_{p,q}^s\triangleq\Big\{u:[0,T]\to\mathcal{S}^{'}; \Vert u\Vert_{\widetilde L_{T}^{r}{B}_{p, q}^{s}}=\big(2^{ks}\Vert \Delta_{k}u\Vert_{L_{T}^{r}L^{p}}\big)_{\ell^{q}}<\infty\Big\}.
\end{aligned}
\end{equation*}
Based on Minkowski's inequality, the following embeddings show the connection between these spaces:
\begin{eqnarray*}
\nonumber\tilde{L}^{r}(0,T;B_{p,q}^{s})\subsetneq L^{r}(0,T;B_{p,q}^{s}), \;\;\mbox{ if}\;\; r>q,\\
\nonumber\tilde{L}^{r}(0,T;B_{p,q}^{s})\supsetneq L^{r}(0,T;B_{p,q}^{s}), \;\;\mbox{ if}\;\; r<q,\\
\nonumber\tilde{L}^{r}(0,T;B_{p,q}^{s})=L^{r}(0,T;B_{p,q}^{s}), \;\;\mbox{ if}\;\; r=q.
\end{eqnarray*}
\end{defi}
Assuming that the Fourier transform of the function is compactly supported, the following {\it Bernstein} inequalities characterize a bound on the derivatives of a function in the $L^b-$norm in terms of the value of the function in the $L^a-$norm. For more details, see \cite{HCD,che10}.
\begin{lem}\label{lem1}
    Let $\alpha \geq 0 $ and $1 \leq p \leq q \leq \infty $ \\
 (1) If $f$ satisfies
 \begin{equation*}
      \supp\widehat{f} \subset \lbrace \xi \in \mathbb{R}^{d} : | \xi | \leq K 2^{j} \rbrace\;\;\mbox{for some integer $j$ and a constant $ K > 0$}.
 \end{equation*}  
 Then
 \begin{equation*}
       \lVert (-\Delta)^{\alpha} f \rVert_{L^{q}(\mathbb{R}^{d})} \leq C_{1} 2^{2\alpha j+jd(\frac{1}{p}-\frac{1}{q})}\lVert  f \rVert_{L^{p}(\mathbb{R}^{d})}
 \end{equation*}
 (2) If $f$ satisfies
  \begin{equation*}
        \supp \widehat{f} \subset \lbrace \xi \in \mathbb{R}^{d} : K_{1} 2^{j} \leq | \xi | \leq K_{2} 2^{j} \rbrace,\;\;\mbox{for some integer $j$ and constants $ 0 < K_{1}\leq K_{2}$}.
 \end{equation*}
  Then
  \begin{equation*}
C_{1} 2^{2\alpha j}\lVert  f \rVert_{L^{p}(\mathbb{R}^{d})} \leq    \lVert (-\Delta)^{\alpha} f \rVert_{L^{q}(\mathbb{R}^{d})} \leq C_{2} 2^{2\alpha j+jd(\frac{1}{p}-\frac{1}{q})}\lVert  f \rVert_{L^{p}(\mathbb{R}^{d})} 
 \end{equation*}
 where $C_{1}$ and $C_{2}$ are constants depending on $\alpha$, $p$ and $q$.
\end{lem}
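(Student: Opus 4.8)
The plan is to realise $(-\Delta)^{\alpha}$, acting on a frequency--localised function, as a convolution operator and then to combine a scaling argument with Young's convolution inequality (recalled in the excerpt). For part~(1), I would fix $\phi\in C_{c}^{\infty}(\RR^{d})$ with $\phi\equiv1$ on $\{|\xi|\le K\}$ and $\supp\phi\subset\{|\xi|\le 2K\}$. Since $\supp\widehat{f}\subset\{|\xi|\le K2^{j}\}$, the rescaled cutoff $\phi(2^{-j}\cdot)$ equals $1$ on $\supp\widehat{f}$, so that $\widehat{(-\Delta)^{\alpha}f}(\xi)=|\xi|^{2\alpha}\phi(2^{-j}\xi)\,\widehat{f}(\xi)$ and hence $(-\Delta)^{\alpha}f=h_{j}\ast f$ with $h_{j}=\mathcal{F}^{-1}\!\big(|\xi|^{2\alpha}\phi(2^{-j}\xi)\big)$. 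The homogeneity $|\xi|^{2\alpha}\phi(2^{-j}\xi)=2^{2\alpha j}\,m_{1}(2^{-j}\xi)$, where $m_{1}(\eta)=|\eta|^{2\alpha}\phi(\eta)$, yields after inversion $h_{j}(x)=2^{j(2\alpha+d)}h_{1}(2^{j}x)$ with $h_{1}=\mathcal{F}^{-1}(m_{1})$, and a change of variables then gives
\[
\|h_{j}\|_{L^{r}}=2^{\,j(2\alpha+d(1-1/r))}\|h_{1}\|_{L^{r}}=2^{\,j(2\alpha+d(1/p-1/q))}\|h_{1}\|_{L^{r}},
\]
where $r$ is chosen by $\tfrac1r=1+\tfrac1q-\tfrac1p$ (so $r\ge1$, since $p\le q$). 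Applying Young's inequality to $h_{j}\ast f$ with the triple $(r,p,q)$ produces precisely $\|(-\Delta)^{\alpha}f\|_{L^{q}}\le C_{1}\,2^{2\alpha j+jd(1/p-1/q)}\|f\|_{L^{p}}$ with $C_{1}=\|h_{1}\|_{L^{r}}$.

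I expect the main obstacle to be the verification that the base kernel $h_{1}$ is finite in the norm used, i.e. $h_{1}\in L^{r}$, because $m_{1}(\eta)=|\eta|^{2\alpha}\phi(\eta)$ fails to be smooth at the origin whenever $2\alpha$ is not an even integer. To handle this I would argue in two ranges: since $m_{1}\in L^{1}\cap L^{\infty}$ one has $h_{1}\in L^{\infty}$, while the large--$x$ behaviour of $h_{1}$ is dictated by the origin singularity of $m_{1}$; as $m_{1}(\eta)\sim|\eta|^{2\alpha}$ near $0$, the inverse transform decays like $|h_{1}(x)|\lesssim|x|^{-d-2\alpha}$, which is integrable at infinity exactly because $\alpha>0$ (for $\alpha=0$ the multiplier lies in $C_{c}^{\infty}$ and $h_{1}$ is Schwartz). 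Thus $h_{1}\in L^{1}\cap L^{\infty}\subset L^{r}$ for every $r\in[1,\infty]$, and $C_{1}<\infty$.

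For part~(2) the annular support $\{K_{1}2^{j}\le|\xi|\le K_{2}2^{j}\}$ lets me instead take $\widetilde\phi\in C_{c}^{\infty}$ supported in an annulus avoiding the origin and equal to $1$ on $\{K_{1}\le|\xi|\le K_{2}\}$; then $|\xi|^{2\alpha}\widetilde\phi(\xi)\in C_{c}^{\infty}$, so the corresponding kernel is Schwartz, the integrability question disappears, and the upper estimate follows verbatim from the computation above for every $\alpha\ge0$. For the lower estimate I would invert the symbol on the annulus: because $\widehat{f}$ is supported there, $f=\mathcal{F}^{-1}\!\big(|\xi|^{-2\alpha}\widetilde\phi(2^{-j}\xi)\big)\ast\big((-\Delta)^{\alpha}f\big)=k_{j}\ast\big((-\Delta)^{\alpha}f\big)$, where now $|\eta|^{-2\alpha}\widetilde\phi(\eta)\in C_{c}^{\infty}$ (the origin being excluded) so that $k_{1}=\mathcal{F}^{-1}(|\eta|^{-2\alpha}\widetilde\phi)$ is Schwartz. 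The same scaling gives $\|k_{j}\|_{L^{1}}=2^{-2\alpha j}\|k_{1}\|_{L^{1}}$, and Young's inequality with the triple $(1,p,p)$ yields $\|f\|_{L^{p}}\le2^{-2\alpha j}\|k_{1}\|_{L^{1}}\,\|(-\Delta)^{\alpha}f\|_{L^{p}}$, i.e. the reverse bound $C_{1}2^{2\alpha j}\|f\|_{L^{p}}\le\|(-\Delta)^{\alpha}f\|_{L^{p}}$ with $C_{1}=\|k_{1}\|_{L^{1}}^{-1}$; this same--exponent form carries the essential content of the lower inequality, namely the frequency gain $2^{2\alpha j}$ furnished by the excision of the origin.
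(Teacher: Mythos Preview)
The paper does not prove this lemma; it is quoted from the standard references \cite{HCD,che10} (Bahouri--Chemin--Danchin and Chemin), so there is no in-paper argument to compare against. Your proposal is precisely the classical proof found in those references: realise the frequency-localised multiplier $|\xi|^{2\alpha}$ as convolution by a rescaled kernel, compute the $L^{r}$ norm of that kernel by scaling, and conclude via Young's inequality; the lower bound in the annular case is obtained by inverting the multiplier, which is legitimate because the origin is excised. Your treatment of the only non-trivial point---the integrability of $h_{1}=\mathcal{F}^{-1}\big(|\eta|^{2\alpha}\phi(\eta)\big)$ when $2\alpha\notin 2\NN$---is correct in spirit; the rigorous version is exactly the integration-by-parts/homogeneous-distribution argument in \cite[Lemma~2.1--2.2]{HCD}.

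One small remark: as stated in the paper the lower inequality in~(2) places $\|(-\Delta)^{\alpha}f\|_{L^{q}}$ in the middle, which cannot hold for $p<q$ (it would force $\|f\|_{L^{p}}\lesssim\|f\|_{L^{q}}$ on annulus-supported functions). You rightly prove the same-exponent version $C_{1}2^{2\alpha j}\|f\|_{L^{p}}\le\|(-\Delta)^{\alpha}f\|_{L^{p}}$, which is both the correct statement and the only form actually used later in the paper (e.g.\ the lower bound on the dissipation in \eqref{eq1} and \eqref{eq17}).
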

\begin{lem}\label{lem2}
    Let $j\in\mathbb{Z}$ be an integer. Let $\Delta_{j}$ be a dyadic block operator (either inhomogeneous or homogeneous). For any vectors field $u, v, w$ with $\nabla\cdot u=0$, we have
    \begin{equation*}
        \begin{aligned}
|\int_{\mathbb{R}^{d}}\Delta_{j}(v\cdot\nabla u)\cdot\Delta_{j}w d x|
&\leq C\|\Delta_{j}w\|_{L^{2}}(2^{j}\sum\limits_{m\leq j-1}2^{\frac{d}{2}m}\|\Delta_{m}v\|_{L^{2}}\sum\limits_{|j-k|\leq2}\|\Delta_{k}u\|_{L^{2}}\\
&+\sum\limits_{|j-k|\leq2}\|\Delta_{k}v\|_{L^{2}}\sum\limits_{m\leq j-1}2^{(1+\frac{d}{2})m}\|\Delta_{m}u\|_{L^{2}}\\
&+\sum\limits_{k\geq j-4}2^{j}2^{\frac{d}{2}k}\|\Delta_{k}v\|_{L^{2}}\|\tilde{\Delta}_{k}u\|_{L^{2}}),
        \end{aligned}
    \end{equation*}
    \begin{equation*}
       \begin{aligned}
  |\int_{\mathbb{R}^{d}}\Delta_{j}(u\cdot\nabla v)\cdot\Delta_{j}v dx|
 &\leq C\|\Delta_{j}v\|_{L^{2}}(\sum\limits_{m\leq j-1}2^{(1+\frac{d}{2})m}\|\Delta_{m}u\|_{L^{2}}\sum\limits_{|j-k|\leq2}\|\Delta_{k}v\|_{L^{2}}\\
&+\sum\limits_{|j-k|\leq2}\|\Delta_{k}u\|_{L^{2}}\sum\limits_{m\leq j-1}2^{(1+\frac{d}{2})m}\|\Delta_{m}v\|_{L^{2}}\\
&+\sum\limits_{k\geq j-4}2^{j}2^{\frac{d}{2}k}\|\Delta_{k}u\|_{L^{2}}\|\tilde{\Delta}_{k}v\|_{L^{2}})
\end{aligned}
\end{equation*}
\begin{equation*}
    \begin{aligned}
        |\int_{\mathbb{R}^{d}}\Delta_{j}(vw)\cdot\Delta_{j}u dx|&\leq C\|\Delta_{j}u\|_{L^{2}}(\sum\limits_{m\leq j-1}2^{\frac{d}{2}m}\|\Delta_{m}v\|_{L_{2}}\sum\limits_{|j-k|\leq2}\|\Delta_{k}w\|_{L^{2}}\\
&+\sum\limits_{|j-k|\leq2}\|\Delta_{k}v\|_{L^{2}}\sum\limits_{m\leq j-1}2^{\frac{d}{2}m}\|\Delta_{m}w\|_{L^{2}}\\
&+\sum\limits_{k\geq j-4}2^{\frac{d}{2}k}\|\Delta_{k}v\|_{L^{2}}\|\tilde{\Delta}_{k}w\|_{L^{2}}).
    \end{aligned}
\end{equation*}
\end{lem}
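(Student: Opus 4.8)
The plan is to treat all three inequalities by the same machinery: Bony's decomposition of the product appearing inside $\Delta_j$, the support and almost-orthogonality properties recorded in the Proposition to localize the surviving frequencies, and the Bernstein inequalities of Lemma \ref{lem1} together with the H\"older estimate recalled above to convert each dyadic factor into an $L^2$-norm. In every case I would first apply Cauchy--Schwarz to pull out the factor against which we are testing (namely $\|\Delta_j w\|_{L^2}$, $\|\Delta_j u\|_{L^2}$, or $\|\Delta_j v\|_{L^2}$), so that the real content reduces to an $L^2$ bound on $\Delta_j$ applied to a bilinear expression. Throughout, the two paraproducts will be seen to contribute only frequencies with $|j-k|\le 2$ and the remainder only frequencies with $k\ge j-4$, exactly matching the index ranges in the three sums.

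\textbf{The multiplicative estimate (3) and the paraproduct part of (1).} For the third inequality I would write $vw=T_v w+T_w v+\mathscr{R}(v,w)$. On $T_v w=\sum_k S_{k-1}v\,\Delta_k w$ I bound $\|S_{k-1}v\|_{L^\infty}\le C\sum_{m\le k-2}2^{\frac{d}{2}m}\|\Delta_m v\|_{L^2}$ using the $L^2\!\to\!L^\infty$ Bernstein bound (Lemma \ref{lem1}(1) with $\alpha=0,\ p=2,\ q=\infty$) and leave $\|\Delta_k w\|_{L^2}$ untouched, producing the first sum; $T_w v$ gives the second by symmetry. For $\mathscr{R}(v,w)=\sum_k\Delta_k v\,\tilde{\Delta}_k w$ I use the $L^1\!\to\!L^2$ Bernstein bound $\|\Delta_j(\cdot)\|_{L^2}\le C2^{\frac{d}{2}j}\|\cdot\|_{L^1}$, then H\"older with $p=q=2$, and finally replace $2^{\frac{d}{2}j}$ by $2^{\frac{d}{2}k}$ via $k\ge j-4$. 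Inequality (1) follows the same template applied to $v\cdot\nabla u$: the paraproduct $T_v(\nabla u)$ yields the first sum, the derivative on $u$ supplying the factor $2^j$ through $\|\Delta_k\nabla u\|_{L^2}\le C2^k\|\Delta_k u\|_{L^2}$ with $2^k\simeq 2^j$, while $T_{\nabla u}v$ yields the second sum, the derivative now landing on the low-frequency factor and producing the weight $2^{(1+\frac{d}{2})m}$.

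\textbf{The remainder in (1) and the transport cancellation in (2).} The genuinely delicate point in (1) is the remainder: a naive estimate would leave the high-frequency factor $2^{(1+\frac{d}{2})k}$ rather than the claimed $2^j\,2^{\frac{d}{2}k}$. To recover the low-frequency derivative I would use the incompressibility of the transporting field to rewrite $\sum_k\Delta_k v\,\tilde{\Delta}_k\nabla u=\sum_k\Div(\Delta_k v\otimes\tilde{\Delta}_k u)$, so that the divergence combines with $\Delta_j$ to supply the factor $2^j$; the $L^1\!\to\!L^2$ Bernstein bound and H\"older then give $2^j\,2^{\frac{d}{2}k}\|\Delta_k v\|_{L^2}\|\tilde{\Delta}_k u\|_{L^2}$. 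For inequality (2) the self-pairing with $\Delta_j v$ must be exploited first: since the advecting field is divergence-free, integration by parts gives $\int u\cdot\nabla\Delta_j v\,\Delta_j v\,dx=\tfrac12\int u\cdot\nabla|\Delta_j v|^2\,dx=0$, so that $\int\Delta_j(u\cdot\nabla v)\,\Delta_j v\,dx=\int[\Delta_j,\,u\cdot\nabla]v\,\Delta_j v\,dx$ with the commutator $[\Delta_j,u\cdot\nabla]v=\Delta_j(u\cdot\nabla v)-u\cdot\nabla\Delta_j v$; decomposing this commutator by Bony gains one derivative on the low-frequency factor, which is precisely what upgrades both leading weights to $2^{(1+\frac{d}{2})m}$ and yields the first two sums, while the remainder is again rewritten as a divergence to give the third sum with its $2^j$.

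\textbf{Main obstacle.} I expect the hard part to be twofold, and of a structural rather than computational nature. First, extracting the \emph{low}-frequency derivative $2^j$ (instead of the high-frequency $2^k$) in the remainder terms of (1) and (2) hinges entirely on recasting the transport nonlinearity as a total divergence, which requires the incompressibility of the advecting field; here one should note that for (1) this is the divergence-free condition on $v$, whereas for (2) it is exactly the stated hypothesis $\Div u=0$. Second, obtaining the sharpened first term of (2) rests on the commutator cancellation with $\Delta_j v$ and the classical one-derivative gain of $[\Delta_j,u\cdot\nabla]$, which is what makes the claimed right-hand side strictly better than a bare product bound. Once these two points are in place, the remaining bookkeeping is a routine assembly from the Proposition, Lemma \ref{lem1}, and the H\"older inequality.
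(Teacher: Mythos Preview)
The paper does not supply a proof of this lemma; it is recorded in the ``Background and tool lemmas'' section as a standard estimate and then invoked repeatedly. Your overall route --- Bony decomposition, Bernstein, and for (2) the commutator cancellation $\int (u\cdot\nabla\Delta_j v)\cdot\Delta_j v\,dx=0$ coming from $\nabla\cdot u=0$ --- is exactly the standard argument behind such inequalities, and your treatment of the paraproducts and of (3) is correct.

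There is one point where you make the problem harder than it is. For the remainder in (1) you claim that the ``naive'' estimate leaves a high-frequency factor $2^{(1+\frac{d}{2})k}$ and that one must invoke $\nabla\cdot v=0$ (which is \emph{not} among the hypotheses) to rewrite the remainder as a divergence. In fact no extra hypothesis is needed: applying the $L^1\!\to\!L^2$ Bernstein inequality at level $j$ and then the derivative bound on $\tilde{\Delta}_k\nabla u$ gives the factor $2^{\frac{d}{2}j}\,2^{k}$, not $2^{(1+\frac{d}{2})k}$, and since $d\ge 2$ and $k\ge j-4$ one has
\[
2^{\frac{d}{2}j}\,2^{k}
= 2^{(\frac{d}{2}-1)(j-k)}\,2^{j}\,2^{\frac{d}{2}k}
\le 2^{4(\frac{d}{2}-1)}\,2^{j}\,2^{\frac{d}{2}k}
= C_d\,2^{j}\,2^{\frac{d}{2}k}.
\]
So the remainder bound in (1) follows directly, and the lemma holds under the stated hypothesis $\nabla\cdot u=0$ alone. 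The same arithmetic handles the remainder in (2); the divergence-free condition on $u$ is needed there only for the commutator cancellation that sharpens the first paraproduct term, precisely as you say.
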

Here we provide the Aubin-Lions-Simon Lemma given in \cite[Theorem 6]{che0}.
\begin{lem}
   Let $X$, $B$ and $Y$ be Banach spaces satisfying $X\subset B \subset Y$ with compact imbedding $X \subset\subset B $. If $f_{n}$ is bounded in $L^{q}(0,T;B)$, $1<q \leq \infty$ and $L_{loc}^{1}(0,T;X)$) and if $\dfrac{\partial f_{n}}{\partial_{t}}$ is 
bounded in $L_{loc}^{1}(0,T;Y)$, then $f_{n}$ is relatively compact in $L^{p}(0,T;B)$ for any $p<q$.
\end{lem}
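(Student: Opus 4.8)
The plan is to recognize the final statement as an abstract Aubin--Lions--Simon compactness theorem and to prove it via the Fréchet--Kolmogorov--Riesz criterion for Banach-space-valued $L^p$ functions: a bounded family $\{f_n\}\subset L^p(0,T;B)$ is relatively compact as soon as (i) the averages $\int_{t_1}^{t_2}f_n(t)\,dt$ lie in a fixed relatively compact subset of $B$ for every $0<t_1<t_2<T$, and (ii) the time translates satisfy $\sup_n\|f_n(\cdot+h)-f_n(\cdot)\|_{L^p(0,T-h;B)}\to 0$ as $h\to 0$. The engine of the whole argument is the classical Ehrling (Lions) interpolation inequality
\[
\|u\|_B\le \varepsilon\,\|u\|_X+C_\varepsilon\,\|u\|_Y,\qquad u\in X,\ \varepsilon>0,
\]
which is available precisely because $X\hookrightarrow B$ is compact and $B\hookrightarrow Y$ is continuous. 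First I would establish this by contradiction: if it failed for some $\varepsilon_0$, one would obtain a sequence normalized by $\|u_n\|_B=1$, bounded in $X$ and with $\|u_n\|_Y\to 0$; compactness of $X\subset\subset B$ would give a subsequential limit with $\|u\|_B=1$, while continuity of $B\hookrightarrow Y$ forces $\|u\|_Y=0$, hence $u=0$, a contradiction.

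Next I would verify the time-translate condition (ii). Writing, for a.e.\ $t$,
\[
f_n(t+h)-f_n(t)=\int_t^{t+h}\partial_s f_n(s)\,ds,
\]
one gets $\|f_n(t+h)-f_n(t)\|_Y\le\int_t^{t+h}\|\partial_s f_n(s)\|_Y\,ds$, so the bound on $\partial_t f_n$ in $L^1_{\loc}(0,T;Y)$ yields $\sup_n\|f_n(\cdot+h)-f_n(\cdot)\|_{L^1(a,b;Y)}\to 0$ as $h\to 0$ on each $[a,b]\subset\subset(0,T)$. Applying Ehrling to $g_n^h:=f_n(\cdot+h)-f_n(\cdot)$ gives
\[
\|g_n^h\|_{L^1(a,b;B)}\le \varepsilon\,\|g_n^h\|_{L^1(a,b;X)}+C_\varepsilon\,\|g_n^h\|_{L^1(a,b;Y)},
\]
where the first term is bounded uniformly in $n,h$ by twice the $L^1_{\loc}(X)$-bound (hence small for $\varepsilon$ small) and the second is then made small by taking $h$ small. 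The $L^p$ version of (ii) follows by interpolating this $L^1(B)$-smallness against the uniform $L^q(0,T;B)$-bound, which is legitimate exactly because $p<q$.

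Condition (i) is then immediate: on $[t_1,t_2]\subset\subset(0,T)$ the averages $\int_{t_1}^{t_2}f_n\,dt$ are bounded in $X$ by the $L^1_{\loc}(0,T;X)$ hypothesis, and the compact embedding $X\subset\subset B$ renders them relatively compact in $B$. Combining (i) and (ii) and invoking the Fréchet--Kolmogorov--Riesz criterion produces relative compactness of $\{f_n\}$ in $L^p(a,b;B)$ on every interior subinterval $[a,b]\subset\subset(0,T)$.

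The main obstacle, and the source of the restriction $p<q$, is upgrading this interior compactness to the full interval $[0,T]$, since the $X$-control is only \emph{local} and degenerates at the endpoints $t=0,T$. Here I would exploit the global bound in $L^q(0,T;B)$: for $p<q$, Hölder's inequality gives equi-integrability of $\{\|f_n\|_B^p\}$, so the endpoint tails $\int_0^a\|f_n\|_B^p\,dt+\int_b^T\|f_n\|_B^p\,dt$ can be made uniformly small in $n$ by taking $a\to 0$ and $b\to T$. A diagonal extraction over an exhausting sequence of interior intervals, combined with this uniform smallness of the tails, then promotes the interior compactness to relative compactness of $\{f_n\}$ in $L^p(0,T;B)$, completing the proof.
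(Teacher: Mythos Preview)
The paper does not actually prove this lemma; it is stated as a direct citation of \cite[Theorem~6]{che0} (Simon's classical compactness paper), with no argument supplied. Your proposal is a correct and complete outline of the standard proof, and it follows precisely the strategy of Simon's original article: Ehrling's interpolation inequality, the Fr\'echet--Kolmogorov--Riesz criterion via equicontinuity of time translates in $L^1(a,b;B)$ upgraded to $L^p$ by interpolation against the uniform $L^q(0,T;B)$ bound, relative compactness of the time averages from the $L^1_{\loc}(0,T;X)$ bound together with $X\subset\subset B$, and finally the endpoint tails controlled by H\"older's inequality using $p<q$. There is nothing in the paper itself to compare against, but your argument is sound and is exactly the route one finds in the cited reference.
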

\begin{lem}\label{lem4} (Grönwall’s inequality)
Let $0<a<1$, $f$ be a measurable function, $\phi$ a locally integrable function and $\varphi$ a positive, continuous and nondecreasing function. Assume that for some nonnegative real number $c$, the function $f$ satisfies
\begin{equation*}
        f(t)\leq c+\int_{t_{0}}^{t} \phi(\tau)\varphi(f(\tau)) d\tau\;\;\mbox{for some positive constant}\;\;c.
\end{equation*}
Then 
\begin{equation*}
\psi(f(t))+\psi(c)\leq\int_{t_{0}}^{t}\phi(\tau) d\tau\;\;\mbox{and}\;\; \psi(x)=\int_{x}^{a}\frac{
dr}{\varphi(r)}.
\end{equation*}
If $c=0$ and $\varphi$ satisfies
\begin{equation*}
    \int_{0}^{a}\frac{dr}{\varphi(r)}=\infty.
\end{equation*}
Then  $f\equiv0$.
\end{lem}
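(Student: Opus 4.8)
The plan is to run the classical Osgood--Bihari comparison argument: reduce the integral inequality to a separable differential inequality for the right-hand side, and then integrate through the function $\psi$. Throughout I take $f\ge 0$, as is implicit in the hypotheses (since $\varphi$ is evaluated at $f$ and the degenerate conclusion is $f\equiv 0$).

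First I would introduce the right-hand side as an explicit majorant. Set
\[
R(t)\triangleq c+\int_{t_{0}}^{t}\phi(\tau)\,\varphi\big(f(\tau)\big)\,d\tau,
\]
so that by hypothesis $f(t)\le R(t)$ on the interval under consideration. Since $\phi$ is locally integrable and $\varphi(f)$ is locally integrable (by measurability of $f$ and continuity of $\varphi$), the function $R$ is absolutely continuous with $R'(t)=\phi(t)\,\varphi(f(t))$ for a.e.\ $t$. The monotonicity of $\varphi$ together with $f(t)\le R(t)$ gives $\varphi(f(t))\le\varphi(R(t))$, whence the differential inequality
\[
R'(t)\le \phi(t)\,\varphi\big(R(t)\big)\qquad\text{for a.e. }t.
\]

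Next I would treat the nondegenerate case $c>0$. Here $R(t)\ge c>0$, so $\varphi(R(t))>0$ and we may divide to obtain $R'(t)/\varphi(R(t))\le\phi(t)$. Because $\psi(x)=\int_{x}^{a}dr/\varphi(r)$ satisfies $\psi'(x)=-1/\varphi(x)$, the chain rule for the composition of the monotone $C^{1}$ map $\psi$ with the absolutely continuous $R$ yields $\frac{d}{dt}\psi(R(t))=-R'(t)/\varphi(R(t))\ge-\phi(t)$ a.e. Integrating from $t_{0}$ to $t$ and using $R(t_{0})=c$ gives $\psi(R(t))\ge\psi(c)-\int_{t_{0}}^{t}\phi(\tau)\,d\tau$; since $\psi$ is nonincreasing and $f(t)\le R(t)$ we have $\psi(f(t))\ge\psi(R(t))$, which is exactly the asserted comparison controlling $\psi(f(t))$ from below by $\psi(c)-\int_{t_{0}}^{t}\phi$. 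The degenerate case $c=0$ then follows by approximation: apply the previous step with $c$ replaced by an arbitrary $\varepsilon>0$ (the bound $f\le\varepsilon+\int\phi\,\varphi(f)$ still holds), giving $\psi(f(t))\ge\psi(\varepsilon)-\int_{t_{0}}^{t}\phi(\tau)\,d\tau$; letting $\varepsilon\to0^{+}$ and invoking the Osgood condition $\int_{0}^{a}dr/\varphi(r)=\infty$, we get $\psi(\varepsilon)\to+\infty$, forcing $\psi(f(t))=+\infty$ for every $t$. Since $\psi$ is finite on $(0,a]$, this is possible only if $f(t)=0$, i.e.\ $f\equiv0$.

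The main obstacle I expect is the rigorous justification of the differentiation/integration step under the weak regularity hypotheses: $\phi$ is merely locally integrable and $f$ merely measurable, so $R$ is only absolutely continuous (not $C^{1}$), and one must argue the a.e.\ identity $\frac{d}{dt}\psi(R(t))=-R'(t)/\varphi(R(t))$ and the monotone passage $\varphi(f)\le\varphi(R)$ carefully rather than treating $R$ as smooth. A secondary delicate point is the limiting argument in the case $c=0$, where one must ensure the inequality holds uniformly in $\varepsilon$ before sending $\varepsilon\to0^{+}$ and exploiting the divergence of $\psi(\varepsilon)$.
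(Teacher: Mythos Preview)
The paper does not supply a proof of this lemma; it is stated in the preliminary section as a standard tool (an Osgood--Bihari type Gr\"onwall inequality) alongside other unproved background results. Your argument is the classical and correct one: introduce the majorant $R(t)=c+\int_{t_0}^{t}\phi\,\varphi(f)$, use monotonicity of $\varphi$ to get the separable differential inequality $R'\le\phi\,\varphi(R)$, integrate through $\psi$, and handle $c=0$ by approximation with $\varepsilon\downarrow 0$ and the Osgood divergence condition. This is exactly how the result is proved in the standard references, so there is nothing to compare; your proof would serve as a valid justification of the lemma the paper merely quotes.

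One small remark: the displayed conclusion in the paper, $\psi(f(t))+\psi(c)\le\int_{t_0}^{t}\phi$, has a sign slip; the correct form is the one you derived, namely $\psi(f(t))\ge\psi(c)-\int_{t_0}^{t}\phi$ (equivalently $\psi(c)-\psi(f(t))\le\int_{t_0}^{t}\phi$), since $\psi$ is nonincreasing. Your handling of this is right and your identification of the two delicate points (the a.e.\ chain rule for $\psi\circ R$ under mere absolute continuity, and uniformity in $\varepsilon$ before passing to the limit) is accurate.
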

\section{Proof for the existence part of Theorem 1.1}
We concentrate on establishing Theorem 1.1 in this section. Building successive approximation sequences and demonstrating that a subsequence's limit effectively solves (1.1) in the weak sense are our main objectives. To demonstrate existence and uniqueness, we employ alternative models, such as those found in \cite{Fa,GP}.
\par Let us consider a successive approximation sequence $(u^{(n)},B^{(n)},\theta^{(n)})$ that satisfies
\begin{equation}\label{sys2}
    \left\{
    \begin{aligned}
         & u^{(1)}=S_{1}u_{0}, \quad B^{(1)}=S_{1}B_{0}, \quad \theta^{(1)}=S_{1}\theta_{0},
        \\
        &\partial_{t}u^{(n+1)}+\mu(-\Delta)^{\alpha}u^{(n+1)}= \mathbb{P}\bigg[(-u^{(n)}\cdot\nabla u ^{(n+1)}  + B^{(n)}\cdot \nabla B^{(n)}+\theta^{(n)}e_{d}\bigg],
\\
&\partial_{t} B^{(n+1)}+\eta(-\Delta)^{\alpha}B^{(n+1)}=-u^{(n)}\cdot\nabla B^{(n+1)}+B^{(n)}\cdot \nabla u^{(n)},\\
&\partial_{t}\theta^{(n+1)}=-u^{(n)}\cdot\nabla \theta^{(n+1)}+u^{(n)}\cdot e_{d},\\
&\Div{u^{(n+1)}}=\Div{B^{(n+1)}}=0,\\
&u^{(n+1)}(x,0)=S_{n+1}u_{0},\\
& B^{(n+1)}(x,0)=S_{n+1}B_{0},\\
&\theta^{(n+1)}(x,0)=S_{n+1}\theta_{0}.
    \end{aligned}
    \right.
\end{equation}
where $S_{n}$ is the standard inhomogeneous low frequency cutoff operator and $\mathbb{P}=I-\nabla(-\Delta)^{-1}\Div$ is the standard Leray projection.
\par  Set
\begin{align*} 
M&:=2\big{(}\|u_{0}\|_{B_{2,1}^{1+\frac{d}{2}-2\alpha}}+\|B_{0}\|_{B_{2,1}^{1+
\frac{d}{2}-2\alpha}}+\|\theta_{0}\|_{B_{2,1}^{1+\frac{d}{2}-\alpha}}\big{)},\\
Y&:=\Big{\{}(u,B,\theta) \ \Big{|}\ \nonumber\|u\|_{\tilde{L}^{\infty}(0,T;B_{2,1}^{1+\frac{d}{2}-2\alpha})}\leq M, \ \|B\|_{\tilde{L}^{\infty}(0,T;B_{2,1}^{1+\frac{d}{2}-2\alpha})}\leq M, \\ &\|\theta\|_{\tilde{L}^{\infty}(0,T;B_{2,1}^{1+\frac{d}{2}-\alpha})}\leq M, \ \|u\|_{L^{1}(0,T;B_{2,1}^{1+\frac{d}{2}})}\leq \delta, \ \|B\|_{L^{1}(0,T;B_{2,1}^{1+\frac{d}{2}})}\leq \delta \Big{\}}.
\end{align*}
For some small $T > 0$ and $0 <\delta < 1$.
\par In the following Lemma we will prove an estimate for $\lVert u^{(n+1)} \rVert_{\tilde{L}^{\infty}\big((0,T);B_{2,1}^{1+\frac{d}{2}-2\alpha}\big)}$. To this end, we let \begin{equation*}
    \begin{aligned}
        A_{1}&:=-\int_{\mathbb{R}^{d}} \Delta_{j}(u^{(n)}\cdot \nabla u^{(n+1)})\cdot\Delta_{j}u^{(n+1)} dx,\\
A_{2}&:=\int_{\mathbb{R}^{d}}\Delta_{j}(B^{(n)}\cdot \nabla B^{(n)})\cdot \Delta_{j}u^{(n+1)} dx, \\
A_{3}&:=\int_{\mathbb{R}^{d}}\Delta_{j}(\theta^{(n)}e_{d})\cdot \Delta_{j}u^{(n+1)}dx.
    \end{aligned}
\end{equation*}
\begin{lem}\label{u(n+1)}
The following estimate holds 
\begin{equation*}
    \begin{aligned}
     \lVert u^{(n+1)} \rVert_{\tilde{L}^{\infty}\big((0,T);B_{2,1}^{1+\frac{d}{2}-2\alpha}\big)} &\leq \lVert u_{0}^{(n+1)} \rVert_{B_{2,1}^{1+\frac{d}{2}-2\alpha}} + C \delta \lVert u^{(n+1)} \rVert_{\tilde{L}^{\infty}\big((0,T);B_{2,1}^{1+\frac{d}{2}-2\alpha}\big)}\\ & +
    C\lVert B^{(n)} \rVert_{\tilde{L}^{\infty}\big((0,T);B_{2,1}^{1+\frac{d}{2}-2\alpha}\big)} \lVert B^{(n)} \rVert_{L^{1}\big((0,T);B_{2,1}^{1+\frac{d}{2}}\big)} \\ &
      + CT\lVert \theta^{(n)} \rVert_{\tilde{L}^{\infty}\big((0,T);B_{2,1}^{1+\frac{d}{2}-\alpha}\big)}.
\end{aligned}
\end{equation*}
\end{lem}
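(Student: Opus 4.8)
The strategy is the standard energy estimate at the level of a single dyadic block $\Delta_j$, followed by summation against the Besov weight $2^{j(1+\frac{d}{2}-2\alpha)}$. First I would apply $\Delta_j$ to the $u^{(n+1)}$-equation in \eqref{sys2}, take the $L^2$ inner product with $\Delta_j u^{(n+1)}$, and integrate over $\mathbb{R}^d$. The time-derivative term produces $\frac12\frac{d}{dt}\|\Delta_j u^{(n+1)}\|_{L^2}^2$; the dissipative term $\mu(-\Delta)^\alpha$ contributes a nonnegative quantity bounded below, via the lower Bernstein inequality in Lemma~\ref{lem1}(2), by $c\mu 2^{2\alpha j}\|\Delta_j u^{(n+1)}\|_{L^2}^2$, which we simply discard (or keep — it is not needed for this particular estimate since we are only after the $\tilde L^\infty$ bound, not the $L^1$-smoothing bound). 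The Leray projection $\mathbb{P}$ is bounded on every $L^p$ and commutes with $\Delta_j$, so it is harmless. What remains on the right is exactly $A_1 + A_2 + A_3$ as defined just before the lemma. Dividing by $\|\Delta_j u^{(n+1)}\|_{L^2}$ and integrating in time on $[0,t]$ gives
\begin{equation*}
\|\Delta_j u^{(n+1)}(t)\|_{L^2} \leq \|\Delta_j u_0^{(n+1)}\|_{L^2} + \int_0^t \frac{|A_1| + |A_2| + |A_3|}{\|\Delta_j u^{(n+1)}\|_{L^2}}\, d\tau .
\end{equation*}

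Next I would estimate each $A_i$ using Lemma~\ref{lem2}. For $A_1 = -\int \Delta_j(u^{(n)}\cdot\nabla u^{(n+1)})\cdot\Delta_j u^{(n+1)}\,dx$ I would invoke the second inequality of Lemma~\ref{lem2} (with $v = u^{(n+1)}$, $u = u^{(n)}$ there), which after dividing by $\|\Delta_j u^{(n+1)}\|_{L^2}$, multiplying by $2^{j(1+\frac{d}{2}-2\alpha)}$ and summing over $j$, telescopes into a product of the form $\|u^{(n)}\|_{B_{2,1}^{1+\frac{d}{2}}}\,\|u^{(n+1)}\|_{B_{2,1}^{1+\frac{d}{2}-2\alpha}}$ — this is the standard product law argument where one of the two sums over dyadic pieces reconstructs the high-regularity norm $B_{2,1}^{1+d/2}$ and the other the base norm. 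Integrating in time and using Hölder in $t$ (the high norm sits in $L^1_T$, the base norm in $L^\infty_T$) produces the term $C\delta\|u^{(n+1)}\|_{\tilde L^\infty(0,T;B_{2,1}^{1+\frac d2-2\alpha})}$, since $\|u^{(n)}\|_{L^1_T B_{2,1}^{1+d/2}} \le \delta$ by the definition of $Y$. For $A_2 = \int\Delta_j(B^{(n)}\cdot\nabla B^{(n)})\cdot\Delta_j u^{(n+1)}\,dx$ I would use the first inequality of Lemma~\ref{lem2} (with $v = w = B^{(n)}$ and the test field $u^{(n+1)}$) — note $\nabla\cdot B^{(n)} = 0$ so $B^{(n)}\cdot\nabla B^{(n)}$ is in divergence form and the lemma applies; summing yields $C\|B^{(n)}\|_{B_{2,1}^{1+\frac d2-2\alpha}}\|B^{(n)}\|_{B_{2,1}^{1+\frac d2}}$, and integrating in time with Hölder ($L^\infty_T$ against $L^1_T$) gives the third term on the right-hand side. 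For $A_3 = \int\Delta_j(\theta^{(n)}e_d)\cdot\Delta_j u^{(n+1)}\,dx$, which has no derivative, I would just use Cauchy--Schwarz, $|A_3| \le \|\Delta_j\theta^{(n)}\|_{L^2}\|\Delta_j u^{(n+1)}\|_{L^2}$, divide by $\|\Delta_j u^{(n+1)}\|_{L^2}$, apply the weight and sum; since $1+\frac d2-2\alpha \le 1+\frac d2-\alpha$ (as $\alpha\ge 1>0$) the embedding $B_{2,1}^{1+\frac d2-\alpha}\hookrightarrow B_{2,1}^{1+\frac d2-2\alpha}$ gives $\sum_j 2^{j(1+\frac d2-2\alpha)}\|\Delta_j\theta^{(n)}\|_{L^2} \le \|\theta^{(n)}\|_{B_{2,1}^{1+\frac d2-\alpha}}$, and integrating trivially in time over $[0,T]$ produces the factor $CT$.

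Finally I would assemble the pieces: multiply the per-block inequality by $2^{j(1+\frac d2-2\alpha)}$, sum over $j\ge -1$, and take the supremum over $t\in[0,T]$ — using that the sum of the suprema dominates the supremum of the sum (the $\tilde L^\infty$ norm) — to arrive at precisely the claimed estimate for $\|u^{(n+1)}\|_{\tilde L^\infty(0,T;B_{2,1}^{1+\frac d2-2\alpha})}$. I expect the main technical obstacle to be bookkeeping in the summation step for $A_1$ and $A_2$: one must interchange the order of the double dyadic sums carefully (e.g.\ the inner sums $\sum_{m\le j-1}$ and $\sum_{k\ge j-4}$ from Lemma~\ref{lem2}), use $\ell^1$-convolution / Young's inequality for series to decouple the two frequency indices, and verify that the geometric factors $2^{j}$, $2^{\frac d2 m}$, $2^{(1+\frac d2)m}$ combine with the chosen Besov exponents so that one sum reproduces the $B_{2,1}^{1+d/2}$ norm and the other the $B_{2,1}^{1+d/2-2\alpha}$ norm without any residual divergent factor — this is where the precise value of the regularity index $1+\frac d2-2\alpha$ and the constraint $\alpha < 1+\frac d4$ (equivalently $1+\frac d2-2\alpha > \frac{d}{2} - (1+\frac d2) \cdot 0$, i.e.\ keeping the relevant exponents in range) actually get used. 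The commutator-free structure here (we are estimating $u^{(n+1)}$ in its own equation, not a difference of solutions) means no commutator estimate is needed, which simplifies matters; the transport term $A_1$ is absorbed into the left side thanks to the smallness $\delta$.
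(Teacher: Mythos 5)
Your proposal is correct and follows essentially the same route as the paper: a blockwise energy estimate after applying $\Delta_j$, Lemma \ref{lem2} for $A_1$ and $A_2$, Cauchy--Schwarz for $A_3$, Young's inequality for the series convolutions (where the restriction $\alpha<1+\frac{d}{4}$ enters in the remainder terms), H\"older in time pairing the $L^1_T B_{2,1}^{1+\frac d2}$ norm against the $\tilde L^\infty_T B_{2,1}^{1+\frac d2-2\alpha}$ norm, and the bound $\|u^{(n)}\|_{L^1(0,T;B_{2,1}^{1+\frac d2})}\le\delta$ from $Y$ to produce the $C\delta$ term. The only (immaterial) difference is that the paper retains the dissipation and writes the Duhamel form with the factor $e^{-C_0 2^{2\alpha j}(t-\tau)}\le 1$, treating $j=-1$ separately, whereas you simply discard the nonnegative dissipative term, which yields the same inequality for this $\tilde L^\infty$ estimate.
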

\begin{proof}
Let $j \geq 0$ be an integer. Applying $\Delta_{j}$ to (\ref{sys2})$_{1}$ and then dotting the equation with
$\Delta_{j}u^{(n+1)}$ we get
\begin{equation}\label{eq1}
    \dfrac{1}{2}\dfrac{d}{dt}  \lVert \Delta_{j}u^{(n+1)} \rVert_{L_{2}}^{2}+\mu \lVert \Lambda^{\alpha}\Delta_{j}u^{(n+1)}\rVert_{L^{2}}^{2}=A_{1}+A_{2}+A_{3}.
\end{equation}
Applying the Bernstein’s inequality, the dissipative part of $(\ref{eq1})$ admit a lower bound where $C_{0} > 0$ is a constant. 
$$
\mu \lVert \Lambda^{\alpha}\Delta_{j}u^{(n+1)}\rVert_{L^{2}}^{2} \geq C_{0}2^{2\alpha j}\lVert \Delta_{j}u^{(n+1)}\rVert_{L^{2}}^{2}.
$$
According to Lemma \ref{lem2}, $A_{1}$ can be bounded by
\begin{equation}\label{eq2}
    \begin{aligned}
           |A_{1}| 
           &\leq  C \lVert \Delta_{j}u^{(n+1)}\rVert_{L^{2}} \bigg(\sum_{m\leq j-1}2^{(1+\frac{d}{2})m} \lVert \Delta_{m} u^{(n)} \rVert _{L^{2}}\sum_{|j-k|\leq 2}\lVert \Delta_{k} u^{(n+1)} \rVert _{L^{2}}\\
           &+\sum_{|j-k|\leq 2}\lVert \Delta_{k} u^{(n)} \rVert _{L^{2}}  \sum_{m\leq j-1}2^{(1+\frac{d}{2})m} \lVert \Delta_{m} u^{(n+1)} \rVert_{L^{2}}\\
  &  +\sum_{k\geq j-4} 2^{j}2^{\frac{d}{2}k}
\lVert \Delta_{k} u^{(n)} \rVert _{L^{2}}\lVert \tilde{\Delta}_{k} u^{(n+1)} \rVert _{L^{2}} \bigg).
    \end{aligned}
\end{equation}
Also, by Lemma \ref{lem2}, $A_{2}$ is bounded by
\begin{equation}\label{eq3}
\begin{aligned}
   |A_{2}|
   &\leq  C \lVert \Delta_{j}u^{(n+1)}\rVert_{L^{2}}\bigg(2^{j} \sum_{m\leq j-1} 2^{\frac{d}{2}m}\lVert \Delta_{m} B^{(n)} \rVert_{L^{2}}\sum_{|j-k|\leq 2} \lVert \Delta_{k} B^{(n)} \rVert_{L^{2}}\\
    &+\sum_{|j-k|\leq 2} \lVert \Delta_{k} B^{(n)} \rVert_{L^{2}} \sum_{m\leq j-1} 2^{(1+\frac{d}{2})m}\lVert \Delta_{m} B^{(n)} \rVert_{L^{2}}\\
&+\sum_{k\geq j-4} 2^{j}2^{\frac{d}{2}k}
\lVert \Delta_{k} B^{(n)} \rVert _{L^{2}}\lVert \tilde{\Delta}_{k} B^{(n)} \rVert _{L^{2}} \bigg).
\end{aligned}
\end{equation}
Applying the H\"older and Bernstein’s inequalities,  $A_{3}$ can be bounded by
\begin{equation}\label{eq4}
    \begin{aligned}
|A_{3}| &\leq C \lVert \Delta_{j} \theta^{(n)} \rVert_{L^{2}}\lVert \Delta_{j}u^{(n+1)}\rVert_{L^{2}} 
    \end{aligned}
\end{equation}
Inserting the estimates (\ref{eq2})-(\ref{eq4} ) into (\ref{eq1}) and eliminating 	$\lVert \Delta_{j}u^{(n+1)} \rVert_{L_{2}}$
	from both side of
the inequality, we get
\begin{equation}\label{eq5}
     \dfrac{d}{dt}\lVert \Delta_{j}u^{(n+1)} \rVert_{L_{2}}+C_{0}2^{2\alpha j}\lVert \Delta_{j}u^{(n+1)}\rVert_{L^{2}}\leq I_{1}+\cdots+I_{7}
\end{equation}
where
\begin{equation*}
    \begin{aligned}
        I_{1}&=C \sum_{m\leq j-1}2^{(1+\frac{d}{2})m} \lVert \Delta_{m} u^{(n)} \rVert _{L^{2}}\sum_{|j-k|\leq 2}\lVert \Delta_{k} u^{(n+1)} \rVert _{L^{2}} 
\\
I_{2}&=C \sum_{|j-k|\leq 2}\lVert \Delta_{k} u^{(n)} \rVert _{L^{2}}  \sum_{m\leq j-1}2^{(1+\frac{d}{2})m} \lVert \Delta_{m} u^{(n+1)} \rVert_{L^{2}}
\\
I_{3}&= C  \sum_{k\geq j-4} 2^{j}2^{\frac{d}{2}k}
\lVert \Delta_{k} u^{(n)} \rVert _{L^{2}}\lVert \tilde{\Delta}_{k} u^{(n+1)} \rVert _{L^{2}}
\\
I_{4}&= C\quad 2^{j} \sum_{m\leq j-1} 2^{\frac{d}{2}m}\lVert \Delta_{m} B^{(n)} \rVert_{L^{2}}\sum_{|j-k|\leq 2} \lVert \Delta_{k} B^{(n)} \rVert_{L^{2}}
\\
I_{5}&= C \sum_{|j-k|\leq 2} \lVert \Delta_{k} B^{(n)} \rVert_{L^{2}} \sum_{m\leq j-1} 2^{(1+\frac{d}{2})m}\lVert \Delta_{m} B^{(n)} \rVert_{L^{2}}
\\
I_{6}&= C \sum_{k\geq j-4} 2^{j}2^{\frac{d}{2}k}
\lVert \Delta_{k} B^{(n)} \rVert _{L^{2}}\lVert \tilde{\Delta}_{k} B^{(n)} \rVert_{L^{2}}
\\
I_{7}&= C \lVert \Delta_{j} \theta^{(n)} \rVert_{L^{2}}
    \end{aligned}
\end{equation*}
Integrating $(\ref{eq5})$ in time for $t \in  [0,T] $ yields
\begin{equation}\label{eq6}
 \lVert \Delta_{j}u^{(n+1)} \rVert_{L_{2}} \leq e^{-C_{0}2^{2\alpha j}t} \lVert \Delta_{j}u_{0}^{(n+1)} \rVert_{L_{2}} + \int_{0}^{t}e^{-C_{0}2^{2\alpha j}(t-\tau)}(I_{1}+\cdots+I_{7}) d\tau 
\end{equation}
For $j=-1$
\begin{equation}\label{eq7}
     \lVert \Delta_{-1}u^{(n+1)} \rVert_{L_{2}} \leq \lVert \Delta_{-1}u_{0}^{(n+1)} \rVert_{L_{2}} + \int_{0}^{t}(I_{1}+\cdots+I_{7}) d\tau 
\end{equation}
Taking the $L^{\infty}(0,T)$ of $(\ref{eq6})$ and $(\ref{eq7})$, then multiplying by $ 2^{(1+\frac{d}{2}-2\alpha)j}$
and summing up the resulting inequalities with respect to $j$, we have
\begin{equation}\label{eq8}
     \lVert u^{(n+1)} \rVert_{\tilde{L}^{\infty}\big((0,T);B_{2,1}^{1+\frac{d}{2}-2\alpha}\big)} \leq \lVert u_{0}^{(n+1)} \rVert_{B_{2,1}^{1+\frac{d}{2}-2\alpha}} + \sum_{j \geq -1}  2^{(1+\frac{d}{2}-2\alpha)j }\int_{0}^{t}(I_{1}+\cdots+I_{7}) d\tau 
\end{equation}
where we have used the fact $e^{-C_{0}2^{2\alpha j(t-\tau)}}\leq 1$.
Now, we estimate the terms involving $I_{1}$ through $I_{7}$. By H\"older inequality, $I_{1}$ can be estimated as follows
\begin{equation}\label{eq9}
    \begin{aligned}
         \sum_{j \geq -1}  2^{(1+\frac{d}{2}-2\alpha)j }\int_{0}^{t}I_{1}d\tau &\leq C \int_{0}^{T}\sum_{j \geq -1}  2^{(1+\frac{d}{2}-2\alpha)j} \lVert \Delta_{j} u^{(n+1)}\rVert_{L^{2}}\sum_{m\leq j-1}2^{(1+\frac{d}{2})m} \lVert \Delta_{m} u^{(n)} \rVert _{L^{2}} d\tau
    \\
  & \leq C \lVert u^{(n+1)} \rVert_{\tilde{L}^{\infty}\big((0,T);B_{2,1}^{1+\frac{d}{2}-2\alpha}\big)} \lVert u^{(n)} \rVert_{L^{1}\big((0,T);B_{2,1}^{1+\frac{d}{2}}\big)}.
    \end{aligned}
\end{equation}   
Using Young's inequality for series convolution, the term with $I_{2}$ can be estimated as follows:
\begin{equation}\label{eq10}
    \begin{aligned}
           \sum_{j \geq -1}  2^{(1+\frac{d}{2}-2\alpha)j }\int_{0}^{t}I_{2}d\tau &\leq C \int_{0}^{T} \sum_{j\geq 1} 2^{(1+\frac{d}{2})j} \lVert \Delta_{j}u^{(n)}\rVert_{L^{2}} \sum_{m \leq j-1} 2^{-2\alpha(j-m)}2^{(1+\frac{d}{2}-2\alpha)m} \lVert \Delta_{m} u^{(n+1)}\rVert_{L^{2}} d\tau
     \\
   &\leq C \lVert u^{(n+1)} \rVert_{\tilde{L}^{\infty}\big((0,T);B_{2,1}^{1+\frac{d}{2}-2\alpha}\big)} \lVert u^{(n)} \rVert_{L^{1}\big((0,T);B_{2,1}^{1+\frac{d}{2}}\big)}.
    \end{aligned}
\end{equation}
The term $I_{3}$ can be bounded as $I_{1}- I_{2}$
\begin{equation}\label{eq11}
    \begin{aligned}
         \sum_{j \geq -1}  2^{(1+\frac{d}{2}-2\alpha)j }\int_{0}^{t}I_{3} d\tau &\leq C \int_{0}^{T} \sum_{j \geq -1}  2^{(1+\frac{d}{2}-2\alpha)j }\sum_{k\geq j-4} 2^{j}2^{\frac{d}{2}k}
\lVert \Delta_{k} u^{(n)} \rVert _{L^{2}}\lVert \tilde{\Delta}_{k} u^{(n+1)} \rVert _{L^{2}} d \tau
\\
&\leq C \int_{0}^{T}  \sum_{j \geq -1} \sum_{k\geq j-4} 2^{(2+\frac{d}{2}-2\alpha)(j-k)}2^{(1+\frac{d}{2})k}\lVert \Delta_{k} u^{(n)} \rVert _{L^{2}} 2^{(1+\frac{d}{2}-2\alpha)k}\lVert \tilde{\Delta}_{k} u^{(n+1)} \rVert _{L^{2}} d \tau
    \\
   &\leq C \lVert u^{(n+1)} \rVert_{\tilde{L}^{\infty}\big((0,T);B_{2,1}^{1+\frac{d}{2}-2\alpha}\big)} \lVert u^{(n)} \rVert_{L^{1}\big((0,T);B_{2,1}^{1+\frac{d}{2}}\big)}.
   \end{aligned}
\end{equation}
where we have used Young’s inequality for series convolution and we need $\alpha < 1 + \frac{d}{4}$
. The term with $I_{4}$ is bounded by
\begin{equation}\label{eq12}
    \begin{aligned}
           \sum_{j \geq -1}  2^{(1+\frac{d}{2}-2\alpha)j }\int_{0}^{t}I_{4} d\tau &\leq C \int_{0}^{T} \sum_{j \geq -1}  2^{(1+\frac{d}{2}-2\alpha)j }  2^{j} \lVert \Delta_{j} B^{(n)} \rVert_{L^{2}}\sum_{m\leq j-1} 2^{\frac{d}{2}m}\lVert \Delta_{m} B^{(n)} \rVert_{L^{2}} 
    \\
   & \leq  C \int_{0}^{T} \sum_{j \geq -1}  2^{(1+\frac{d}{2})j }\lVert \Delta_{j} B^{(n)} \rVert_{L^{2}}\sum_{m \leq j-1} 2^{(2 \alpha-1)(m-j)} 2^{(1+\frac{d}{2}-2\alpha)m }\lVert \Delta_{m} B^{(n)} \rVert_{L^{2}} 
    \\
    &\leq C \int_{0}^{T} \lVert B^{(n)}\rVert_{B_{2,1}^{1+\frac{d}{2}}}\lVert B^{(n)}\rVert_{B_{2,1}^{1+\frac{d}{2}-2\alpha}} d\tau
    \\
    & \leq C \lVert B^{(n)} \rVert_{\tilde{L}^{\infty}\big((0,T);B_{2,1}^{1+\frac{d}{2}-2\alpha}\big)} \lVert B^{(n)} \rVert_{L^{1}\big((0,T);B_{2,1}^{1+\frac{d}{2}}\big)}.
    \end{aligned}
\end{equation}
 The term with $I_{5}$ is estimated as follows
\begin{equation}\label{eq13}
    \begin{aligned}
         \sum_{j \geq -1}  2^{(1+\frac{d}{2}-2\alpha)j }\int_{0}^{t} I_{5} 
 d\tau &\leq  C \int_{0}^{T} \sum_{j\geq 1} 2^{(1+\frac{d}{2})j} \lVert \Delta_{j}B^{(n)}\rVert_{L^{2}} \sum_{m \leq j-1} 2^{-2\alpha(j-m)}2^{(1+\frac{d}{2}-2\alpha)m} \lVert \Delta_{m} B^{(n)}\rVert_{L^{2}} d\tau
     \\
   &\leq C \lVert B^{(n)} \rVert_{\tilde{L}^{\infty}\big((0,T);B_{2,1}^{1+\frac{d}{2}-2\alpha}\big)} \lVert B^{(n)} \rVert_{L^{1}\big((0,T);B_{2,1}^{1+\frac{d}{2}}\big)}. 
   \end{aligned}
\end{equation}
The term with $I_{6}$ is estimated as follows:
\begin{equation}\label{eq14}
    \begin{aligned}
            \sum_{j \geq -1}  2^{(1+\frac{d}{2}-2\alpha)j }\int_{0}^{t} I_{6} 
 d\tau &\leq  C \int_{0}^{T} \sum_{j\geq 1} 2^{(1+\frac{d}{2}-2\alpha)j}\sum_{k\geq j-4} 2^{j}2^{\frac{d}{2}k}
\lVert \Delta_{k} B^{(n)} \rVert _{L^{2}}\lVert \tilde{\Delta}_{k} B^{(n)} \rVert _{L^{2}}
\\
&\leq C \int_{0}^{T}  \sum_{j \geq -1} \sum_{k\geq j-4} 2^{(2+\frac{d}{2}-2\alpha)(j-k)}2^{(1+\frac{d}{2})k}\lVert \Delta_{k} B^{(n)} \rVert _{L^{2}} 2^{(1+\frac{d}{2}-2\alpha)k}\lVert \tilde{\Delta}_{k} B^{(n)} \rVert _{L^{2}} d \tau
    \\
   &\leq C \lVert B^{(n)} \rVert_{\tilde{L}^{\infty}\big((0,T);B_{2,1}^{1+\frac{d}{2}-2\alpha}\big)} \lVert B^{(n)} \rVert_{L^{1}\big((0,T);B_{2,1}^{1+\frac{d}{2}}\big)}.
    \end{aligned}
\end{equation}
The term $I_{7}$ is estimated as follows:
\begin{equation}\label{eq15}
    \begin{aligned}
        \sum_{j \geq -1}  2^{(1+\frac{d}{2}-2\alpha)j }\int_{0}^{t} I_{7} 
 d\tau &\leq  C \int_{0}^{T} \sum_{j\geq 1}2^{-\alpha j} 2^{(1+\frac{d}{2}-\alpha)j}\lVert \Delta_{j} \theta^{(n)} \rVert_{L^{2}} d \tau \\
 &\leq CT \lVert \theta^{(n)}\rVert_{\tilde{L}^{\infty}\big((0,T);B_{2,1}^{1+\frac{d}{2}-\alpha}\big)}.\\
    \end{aligned}
\end{equation}    
To obtain the intended outcome, we gather the estimations (\ref{eq9})-(\ref{eq15}) and insert them into (\ref{eq8}).
\end{proof}
\par New, we estimate the follwing term $\lVert B^{(n+1)}\rVert_{\tilde{L}^{\infty}((0,T);B_{2,1}^{1+\frac{d}{2}-2\alpha}(\mathbb{R}^{d}))}$. To this end, let 
\begin{equation*}
    \begin{aligned}
         B_{1}&:=-\int_{\mathbb{R}^{d}} \Delta_{j}(u^{(n)}\cdot \nabla B^{(n+1)})\cdot\Delta_{j}B^{(n+1)} dx,
         \\
B_{2}&:=\int_{\mathbb{R}^{d}}\Delta_{j}(B^{(n)}\cdot \nabla u^{(n)})\cdot \Delta_{j}B^{(n+1)} dx,
    \end{aligned}
\end{equation*}
and
\begin{equation*}
    \begin{aligned}
        J_{1}&:=C \sum_{m\leq j-1}2^{(1+\frac{d}{2})m} \lVert \Delta_{m} B^{(n)} \rVert _{L^{2}}\sum_{|j-k|\leq 2}\lVert \Delta_{k} B^{(n+1)} \rVert _{L^{2}}, 
\\
J_{2}&:=C \sum_{|j-k|\leq 2}\lVert \Delta_{k} B^{(n)} \rVert _{L^{2}}  \sum_{m\leq j-1}2^{(1+\frac{d}{2})m} \lVert \Delta_{m} B^{(n+1)} \rVert_{L^{2}},
\\
J_{3}&:= C  \sum_{k\geq j-4} 2^{j}2^{\frac{d}{2}k}
\lVert \Delta_{k} B^{(n)} \rVert _{L^{2}}\lVert \tilde{\Delta}_{k} B^{(n+1)} \rVert _{L^{2}},
\\
J_{4}&:= C~~2^{j} \sum_{m\leq j-1} 2^{\frac{d}{2}m}\lVert \Delta_{m} B^{(n)} \rVert_{L^{2}}\sum_{|j-k|\leq 2} \lVert \Delta_{k} u^{(n)} \rVert_{L^{2}},\\
J_{5}&:= C \sum_{|j-k|\leq 2} \lVert \Delta_{k} B^{(n)} \rVert_{L^{2}} \sum_{m\leq j-1} 2^{(1+\frac{d}{2})m}\lVert \Delta_{m} u^{(n)} \rVert_{L^{2}},\\
J_{6}&:= C \sum_{k\geq j-4} 2^{j}2^{\frac{d}{2}k}\lVert \Delta_{k} B^{(n)} \rVert _{L^{2}}\lVert \tilde{\Delta}_{k} u^{(n)} \rVert _{L^{2}}.
    \end{aligned}
\end{equation*}
\begin{lem}
The following estimate holds:
\begin{equation}\label{eq29}
 \begin{aligned}
 \lVert B^{(n+1)} \rVert_{\tilde{L}^{\infty}\big((0,T);B_{2,1}^{1+\frac{d}{2}-2\alpha}\big)} &\leq \lVert B_{0}^{(n+1)} \rVert_{B_{2,1}^{1+\frac{d}{2}-2\alpha}} +  C \lVert B^{(n)} \rVert_{L^{1}\big((0,T);B_{2,1}^{1+\frac{d}{2}}\big)} \\
&\times\lVert B^{(n+1)} \rVert_{\tilde{L}^{\infty}\big((0,T);B_{2,1}^{1+\frac{d}{2}-2\alpha}\big)}\\
&+ C\lVert B^{(n)} \rVert_{\tilde{L}^{\infty}\big((0,T);B_{2,1}^{1+\frac{d}{2}-2\alpha}\big)} \lVert u^{(n)} \rVert_{L^{1}\big((0,T);B_{2,1}^{1+\frac{d}{2}}\big)}\\
&+  C\lVert u^{(n)} \rVert_{\tilde{L}^{\infty}\big((0,T);B_{2,1}^{1+\frac{d}{2}-2\alpha}\big)} \lVert B^{(n)} \rVert_{L^{1}\big((0,T);B_{2,1}^{1+\frac{d}{2}}\big)}.
 \end{aligned}
\end{equation}    
\end{lem}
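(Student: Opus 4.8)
The plan is to repeat, line for line, the argument that established Lemma~\ref{u(n+1)}, now applied to the equation governing $B^{(n+1)}$ in (\ref{sys2}), which has the same architecture: a fractional dissipation $\eta(-\Delta)^{\alpha}B^{(n+1)}$, a transport nonlinearity $u^{(n)}\cdot\nabla B^{(n+1)}$, and a bilinear source $B^{(n)}\cdot\nabla u^{(n)}$. First I would apply $\Delta_{j}$ to that equation and take the $L^{2}$ inner product with $\Delta_{j}B^{(n+1)}$; using $\Div u^{(n)}=0$ this gives, for $j\geq0$,
\[
\frac{1}{2}\frac{d}{dt}\|\Delta_{j}B^{(n+1)}\|_{L^{2}}^{2}+\eta\|\Lambda^{\alpha}\Delta_{j}B^{(n+1)}\|_{L^{2}}^{2}=B_{1}+B_{2},
\]
with the block $j=-1$ handled separately (no dissipative gain). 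By the Bernstein inequality of Lemma~\ref{lem1} the dissipation dominates $C_{0}2^{2\alpha j}\|\Delta_{j}B^{(n+1)}\|_{L^{2}}^{2}$, and cancelling one factor $\|\Delta_{j}B^{(n+1)}\|_{L^{2}}$ yields
\[
\frac{d}{dt}\|\Delta_{j}B^{(n+1)}\|_{L^{2}}+C_{0}2^{2\alpha j}\|\Delta_{j}B^{(n+1)}\|_{L^{2}}\leq J_{1}+\cdots+J_{6},
\]
where the six terms are produced by Lemma~\ref{lem2}: its second estimate (the $u\cdot\nabla v$ one, with $v=B^{(n+1)}$) bounds $B_{1}$ by $J_{1}+J_{2}+J_{3}$, and its first estimate (the $v\cdot\nabla u$ one, with $v=B^{(n)}$, $u=u^{(n)}$, $w=B^{(n+1)}$) bounds $B_{2}$ by $J_{4}+J_{5}+J_{6}$.

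Next I would integrate this differential inequality in time via Duhamel's formula, bound the semigroup factor $e^{-C_{0}2^{2\alpha j}(t-\tau)}$ by $1$ uniformly in $j$, take $L^{\infty}(0,T)$ in $t$, multiply by $2^{(1+\frac{d}{2}-2\alpha)j}$, and sum over $j\geq-1$, reaching
\[
\|B^{(n+1)}\|_{\tilde{L}^{\infty}((0,T);B_{2,1}^{1+\frac{d}{2}-2\alpha})}\leq\|B_{0}^{(n+1)}\|_{B_{2,1}^{1+\frac{d}{2}-2\alpha}}+\sum_{j\geq-1}2^{(1+\frac{d}{2}-2\alpha)j}\int_{0}^{t}(J_{1}+\cdots+J_{6})\,d\tau.
\]
Each of the six dyadic sums is then estimated exactly as in (\ref{eq9})--(\ref{eq14}): H\"older in $t$ converts the time integral into a product of an $L^{1}_{T}$ norm and an $L^{\infty}_{T}$ norm, and Young's inequality for convolutions in the dyadic index collapses the remaining double sum. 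The three transport pieces $J_{1},J_{2},J_{3}$ give a bound of the form (a constant times) an $L^{1}_{T}B_{2,1}^{1+\frac{d}{2}}$ norm multiplied by $\|B^{(n+1)}\|_{\tilde{L}^{\infty}_{T}B_{2,1}^{1+\frac{d}{2}-2\alpha}}$, while the source pieces $J_{4},J_{5},J_{6}$, being bilinear in $(B^{(n)},u^{(n)})$, split into the two cross terms $C\|B^{(n)}\|_{\tilde{L}^{\infty}_{T}B_{2,1}^{1+\frac{d}{2}-2\alpha}}\|u^{(n)}\|_{L^{1}_{T}B_{2,1}^{1+\frac{d}{2}}}$ and $C\|u^{(n)}\|_{\tilde{L}^{\infty}_{T}B_{2,1}^{1+\frac{d}{2}-2\alpha}}\|B^{(n)}\|_{L^{1}_{T}B_{2,1}^{1+\frac{d}{2}}}$; summing these reproduces (\ref{eq29}).

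The one delicate point, exactly as for Lemma~\ref{u(n+1)}, is the high--high interaction (remainder) terms $J_{3}$ and $J_{6}$: after reorganizing $\sum_{j\geq-1}\sum_{k\geq j-4}$ so that the $k$-sum carries the two Besov weights, the surviving factor is $2^{(2+\frac{d}{2}-2\alpha)(j-k)}$, which is the kernel of an $\ell^{1}$-bounded convolution operator on the index range $j-k\leq4$ precisely when $2+\frac{d}{2}-2\alpha>0$, i.e. $\alpha<1+\frac{d}{4}$ -- which is exactly the hypothesis of Theorem~\ref{Th1}. Everything else is a routine repetition of the scheme already carried out for $u^{(n+1)}$; the only bookkeeping difference is that here the source $B^{(n)}\cdot\nabla u^{(n)}$ is bilinear rather than producing a linear temperature contribution, so one retains both orderings of the cross product instead of the single term $CT\|\theta^{(n)}\|_{\tilde{L}^{\infty}}$ that appeared in the velocity estimate.
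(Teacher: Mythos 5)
Your proposal is correct and follows essentially the same scheme as the paper's proof: localize with $\Delta_j$, use Bernstein to extract the dissipative factor $C2^{2\alpha j}$, invoke Lemma \ref{lem2} for $B_1$ and $B_2$, integrate by Duhamel with $e^{-C2^{2\alpha j}(t-\tau)}\le 1$, and then apply H\"older in time and Young's inequality for the dyadic convolution, with the restriction $\alpha<1+\frac{d}{4}$ entering exactly in the high--high terms as you indicate. One minor remark: a correct application of Lemma \ref{lem2} to $B_1=-\int_{\mathbb{R}^d}\Delta_j(u^{(n)}\cdot\nabla B^{(n+1)})\cdot\Delta_j B^{(n+1)}\,dx$ puts $u^{(n)}$ (not $B^{(n)}$) in the low-frequency factors, so the transport contribution is really $C\lVert u^{(n)}\rVert_{L^{1}((0,T);B_{2,1}^{1+\frac{d}{2}})}\lVert B^{(n+1)}\rVert_{\tilde{L}^{\infty}((0,T);B_{2,1}^{1+\frac{d}{2}-2\alpha})}$; the paper's definitions of $J_1$--$J_3$ (and hence the stated estimate) carry the same slip, which is harmless for the ensuing iteration since $u^{(n)}$ and $B^{(n)}$ satisfy identical bounds in $Y$.
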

\begin{proof}
Let $j \geq 0$ be an integer. Applying $\Delta_{j}$ to (\ref{sys2})$_{2}$ and then dotting the equation with $\Delta_{j}B^{(n+1)}$ we get
\begin{equation}\label{eq17}
     \dfrac{1}{2}\dfrac{d}{dt}  \lVert \Delta_{j}B^{(n+1)} \rVert_{L_{2}}^{2}+\eta \lVert \Lambda^{\alpha}\Delta_{j}B^{(n+1)}\rVert_{L^{2}}^{2}=B_{1}+B_{2},
\end{equation}
Applying the Bernstein’s inequality, the dissipative part of $(\ref{eq17})$ admit a lower bound as follows: 
\begin{equation*}\label{eq19}
    \eta \lVert \Lambda^{\alpha}\Delta_{j}B^{(n+1)}\rVert_{L^{2}}^{2} \geq C_{1}2^{2\alpha j}\lVert \Delta_{j}B^{(n+1)}\rVert_{L^{2}}^{2},\;\;\mbox{for some positive constant}\;\;C_{1}.
\end{equation*}
Drawing on an argument like we did to derive (\ref{eq5}), we have
\begin{equation}\label{eq20}
     \dfrac{d}{dt}\lVert \Delta_{j}B^{(n+1)} \rVert_{L_{2}}+C_{1}2^{2\alpha j}\lVert \Delta_{j}B^{(n+1)}\rVert_{L^{2}}\leq J_{1}+\cdots+J_{6}.
\end{equation}
Lemma \ref{lem2} states that $B_{1}$ is bounded by
\begin{eqnarray*}
\begin{aligned}
    |B_{1}| &\leq  C \lVert \Delta_{j}B^{(n+1)}\rVert_{L^{2}} \bigg(\sum_{m\leq j-1}2^{(1+\frac{d}{2})m} \lVert \Delta_{m} B^{(n)} \rVert _{L^{2}}\sum_{|j-k|\leq 2}\lVert \Delta_{k} B^{(n+1)} \rVert _{L^{2}}\\
    &+\sum_{|j-k|\leq 2}\lVert \Delta_{k} B^{(n)} \rVert _{L^{2}}  \sum_{m\leq j-1}2^{(1+\frac{d}{2})m} \lVert \Delta_{m} B^{(n+1)} \rVert_{L^{2}}\\
   & +\sum_{k\geq j-4} 2^{j}2^{\frac{d}{2}k}
\lVert \Delta_{k} B^{(n)} \rVert _{L^{2}}\lVert \tilde{\Delta}_{k} B^{(n+1)} \rVert _{L^{2}} \bigg).
\end{aligned}  
\end{eqnarray*}
Also, by Lemma \ref{lem2}, $B_{2}$ is bounded by
\begin{eqnarray*}
\begin{aligned}
   |B_{2}| &\leq  C \lVert \Delta_{j}B^{(n+1)}\rVert_{L^{2}}\bigg(2^{j} \sum_{m\leq j-1} 2^{\frac{d}{2}m}\lVert \Delta_{m} B^{(n)} \rVert_{L^{2}}\sum_{|j-k|\leq 2} \lVert \Delta_{k} u^{(n)} \rVert_{L^{2}}\\
   &+\sum_{|j-k|\leq 2} \lVert \Delta_{k} B^{(n)} \rVert_{L^{2}} \sum_{m\leq j-1} 2^{(1+\frac{d}{2})m}\lVert \Delta_{m} u^{(n)} \rVert_{L^{2}}\\
&+\sum_{k\geq j-4} 2^{j}2^{\frac{d}{2}k}
\lVert \Delta_{k} B^{(n)} \rVert _{L^{2}}\lVert \tilde{\Delta}_{k} u^{(n)} \rVert _{L^{2}} \bigg).
\end{aligned}
\end{eqnarray*}
Integrating (\ref{eq20}) in time for $ t \in [0,T] $  yields
\begin{equation}\label{eq21}
    \lVert \Delta_{j}B^{(n+1)} \rVert_{L_{2}} \leq e^{-C_{1}2^{2\alpha j}t} \lVert \Delta_{j}B_{0}^{(n+1)} \rVert_{L_{2}} + \int_{0}^{t}e^{-C_{1}2^{2\alpha j}(t-\tau)}(J_{1}+\cdots+J_{6}) d\tau.
\end{equation}
For $j=-1$, arguing similarly as (\ref{eq9})-(\ref{eq15}), and taking the $L^{\infty}(0,T)$ of (\ref{eq21}), multiplying
by $2^{1+\frac{d}{2}-2\alpha}$ and summing over $j$, we deduce
\begin{equation}\label{eq22}
        \lVert B^{(n+1)} \rVert_{\tilde{L}^{\infty}\big((0,T);B_{2,1}^{1+\frac{d}{2}-2\alpha}\big)} \leq \lVert B_{0}^{(n+1)} \rVert_{B_{2,1}^{1+\frac{d}{2}-2\alpha}} + \sum_{j \geq -1}  2^{(1+\frac{d}{2}-2\alpha)j }\int_{0}^{t}(J_{1}+\cdots+J_{6}) d\tau.
\end{equation}
The terms involving $J_{1}$ through $J_{6}$ can be bounded as $I_{1} - I_{7}$
\begin{equation}\label{eq23}
    \begin{aligned}
          \sum_{j \geq -1}  2^{(1+\frac{d}{2}-2\alpha)j }\int_{0}^{t}J_{1}d\tau &\leq C \int_{0}^{T}\sum_{j \geq -1}  2^{(1+\frac{d}{2}-2\alpha)j} \lVert \Delta_{j} B^{(n+1)}\rVert_{L^{2}}\sum_{m\leq j-1}2^{(1+\frac{d}{2})m} \lVert \Delta_{m} B^{(n)} \rVert _{L^{2}} d\tau
    \\
  & \leq C \lVert B^{(n+1)} \rVert_{\tilde{L}^{\infty}\big((0,T);B_{2,1}^{1+\frac{d}{2}-2\alpha}\big)} \lVert B^{(n)} \rVert_{L^{1}\big((0,T);B_{2,1}^{1+\frac{d}{2}}\big)} .
    \end{aligned}
\end{equation}
\begin{equation}\label{eq24}
    \begin{aligned}
        \sum_{j \geq -1}  2^{(1+\frac{d}{2}-2\alpha)j }\int_{0}^{t}J_{2}d\tau &\leq C \int_{0}^{T} \sum_{j\geq 1} 2^{(1+\frac{d}{2})j} \lVert \Delta_{j}B^{(n)}\rVert_{L^{2}} \sum_{m \leq j-1} 2^{-2\alpha(j-m)}2^{(1+\frac{d}{2}-2\alpha)m} \lVert \Delta_{m} B^{(n+1)}\rVert_{L^{2}} d\tau
     \\
  & \leq C \lVert B^{(n+1)} \rVert_{\tilde{L}^{\infty}\big((0,T);B_{2,1}^{1+\frac{d}{2}-2\alpha}\big)} \lVert B^{(n)} \rVert_{L^{1}\big((0,T);B_{2,1}^{1+\frac{d}{2}}\big)}.
   \end{aligned}
\end{equation}
So, the term $J_{3}$ can be bounded as $J_{1}- J_{2}$
\begin{equation}\label{eq25}
    \begin{aligned}
         \sum_{j \geq -1}  2^{(1+\frac{d}{2}-2\alpha)j }\int_{0}^{t}J_{3} d\tau &\leq C \int_{0}^{T} \sum_{j \geq -1}  2^{(1+\frac{d}{2}-2\alpha)j }\sum_{k\geq j-4} 2^{j}2^{\frac{d}{2}k}
\lVert \Delta_{k} B^{(n)} \rVert _{L^{2}}\lVert \tilde{\Delta}_{k} B^{(n+1)} \rVert _{L^{2}} d \tau
\\
&\leq C \int_{0}^{T}  \sum_{j \geq -1} \sum_{k\geq j-4} 2^{(2+\frac{d}{2}-2\alpha)(j-k)}2^{(1+\frac{d}{2})k}\lVert \Delta_{k} B^{(n)} \rVert _{L^{2}} 2^{(1+\frac{d}{2}-2\alpha)k}\lVert \tilde{\Delta}_{k} B^{(n+1)} \rVert _{L^{2}} d \tau
    \\
   &\leq C \lVert B^{(n+1)} \rVert_{\tilde{L}^{\infty}\big((0,T);B_{2,1}^{1+\frac{d}{2}-2\alpha}\big)} \lVert B^{(n)} \rVert_{L^{1}\big((0,T);B_{2,1}^{1+\frac{d}{2}}\big)}. 
   \end{aligned}
\end{equation}
Moreover, the term with $J_{4}$ is bounded by
 \begin{equation}\label{eq26}
     \begin{aligned}
         \sum_{j \geq -1}  2^{(1+\frac{d}{2}-2\alpha)j }\int_{0}^{t}J_{4} d\tau &\leq C \int_{0}^{T} \sum_{j \geq -1}  2^{(1+\frac{d}{2}-2\alpha)j }  2^{j} \lVert \Delta_{j} u^{(n)} \rVert_{L^{2}}\sum_{m\leq j-1} 2^{\frac{d}{2}m}\lVert \Delta_{m} B^{(n)} \rVert_{L^{2}} 
    \\
   & \leq  C \int_{0}^{T} \sum_{j \geq -1}  2^{(1+\frac{d}{2})j }\lVert \Delta_{j} u^{(n)} \rVert_{L^{2}}\sum_{m \leq j-1} 2^{(2 \alpha-1)(m-j)} 2^{(1+\frac{d}{2}-2\alpha)m }\lVert \Delta_{m} B^{(n)} \rVert_{L^{2}} 
    \\
    &\leq C \int_{0}^{T} \lVert u^{(n)}\rVert_{B_{2,1}^{1+\frac{d}{2}}}\lVert B^{(n)}\rVert_{B_{2,1}^{1+\frac{d}{2}-2\alpha}} d\tau
    \\
     &\leq C \lVert B^{(n)} \rVert_{\tilde{L}^{\infty}\big((0,T);B_{2,1}^{1+\frac{d}{2}-2\alpha}\big)} \lVert u^{(n)} \rVert_{L^{1}\big((0,T);B_{2,1}^{1+\frac{d}{2}}\big)}.
     \end{aligned}
 \end{equation}
Also, the term with $J_{5}$ is estimated as follows
\begin{equation}\label{eq27}
    \begin{aligned}
           \sum_{j \geq -1}  2^{(1+\frac{d}{2}-2\alpha)j }\int_{0}^{t}J_{5} 
 d\tau &\leq  C \int_{0}^{T} \sum_{j\geq 1} 2^{(1+\frac{d}{2})j} \lVert \Delta_{j}B^{(n)}\rVert_{L^{2}} \sum_{m \leq j-1} 2^{-2\alpha(j-m)}2^{(1+\frac{d}{2}-2\alpha)m} \lVert \Delta_{m} u^{(n)}\rVert_{L^{2}} d\tau
     \\
  & \leq C \lVert u^{(n)} \rVert_{\tilde{L}^{\infty}\big((0,T);B_{2,1}^{1+\frac{d}{2}-2\alpha}\big)} \lVert B^{(n)} \rVert_{L^{1}\big((0,T);B_{2,1}^{1+\frac{d}{2}}\big)}.
\end{aligned}
\end{equation}
 Finally, the term with $J_{6}$ is estimated as follows
\begin{equation}\label{eq28}
    \begin{aligned}
           \sum_{j \geq -1}  2^{(1+\frac{d}{2}-2\alpha)j }\int_{0}^{t} J_{6} 
 d\tau &\leq  C \int_{0}^{T} \sum_{j\geq 1} 2^{(1+\frac{d}{2}-2\alpha)j}\sum_{k\geq j-4} 2^{j}2^{\frac{d}{2}k}
\lVert \Delta_{k} B^{(n)} \rVert _{L^{2}}\lVert \tilde{\Delta}_{k} u^{(n)} \rVert _{L^{2}}
\\
&\leq C \int_{0}^{T}  \sum_{j \geq -1} \sum_{k\geq j-4} 2^{(2+\frac{d}{2}-2\alpha)(j-k)}2^{(1+\frac{d}{2})k}\lVert \Delta_{k} B^{(n)} \rVert _{L^{2}} 2^{(1+\frac{d}{2}-2\alpha)k}\lVert \tilde{\Delta}_{k} u^{(n)} \rVert _{L^{2}} d \tau
    \\
  &\leq C \lVert B^{(n)} \rVert_{\tilde{L}^{\infty}\big((0,T);B_{2,1}^{1+\frac{d}{2}-2\alpha}\big)} \lVert u^{(n)} \rVert_{L^{1}\big((0,T);B_{2,1}^{1+\frac{d}{2}}\big)}.
\end{aligned}
\end{equation}
\end{proof}
We will estimate the term $\lVert \theta^{(n+1)}\rVert_{\tilde{L}^{\infty}((0,T);B_{2,1}^{1+\frac{d}{2}-\alpha}(\mathbb{R}^{d}))}$. To this end, let 
\begin{equation*}
    \begin{aligned}
       C_{1}&=-\int_{\mathbb{R}^{d}} \Delta_{j}(u^{(n)}\cdot \nabla \theta^{(n+1)})\cdot\Delta_{j}\theta^{(n+1)} dx,\\
C_{2}&=\int_{\mathbb{R}^{d}}\Delta_{j}(u^{(n)}\cdot e_{d})\cdot \Delta_{j}\theta^{(n+1)} dx, 
    \end{aligned}
\end{equation*}
and
\begin{equation*}
    \begin{aligned}
        K_{1}&=C \sum_{m\leq j-1}2^{(1+\frac{d}{2})m} \lVert \Delta_{m} u^{(n)} \rVert _{L^{2}}\sum_{|j-k|\leq 2}\lVert \Delta_{k} \theta^{(n+1)} \rVert _{L^{2}}, 
\\
K_{2}&=C \sum_{|j-k|\leq 2}\lVert \Delta_{k} u^{(n)} \rVert _{L^{2}}  \sum_{m\leq j-1}2^{(1+\frac{d}{2})m} \lVert \Delta_{m} \theta^{(n+1)} \rVert_{L^{2}},
\\
K_{3}&= C  \sum_{k\geq j-4} 2^{j}2^{\frac{d}{2}k}
\lVert \Delta_{k} u^{(n)} \rVert _{L^{2}}\lVert \tilde{\Delta}_{k} \theta^{(n+1)} \rVert _{L^{2}},
\\
K_{4}&= C \lVert \Delta_{j} u^{(n)} \rVert_{L^{2}}. 
    \end{aligned}
\end{equation*}
\begin{lem}\label{estimatetheta(n+1)}
The following estimate holds
Collecting the estimates (\ref{eq38})-(\ref{eq40}) and inserting them into (\ref{eq37}), we arrive at
\begin{equation*}
    \begin{aligned}
          \lVert \theta^{(n+1)} \rVert_{\tilde{L}^{\infty}\big((0,T);B_{2,1}^{1+\frac{d}{2}-\alpha}\big)} &\leq \lVert \theta_{0}^{(n+1)} \rVert_{B_{2,1}^{1+\frac{d}{2}-\alpha}}+C\lVert u^{(n)} \rVert_{L^{1}\big((0,T);B_{2,1}^{1+\frac{d}{2}}\big)}  \lVert \theta^{(n+1)} \rVert_{\tilde{L}^{\infty}\big((0,T);B_{2,1}^{1+\frac{d}{2}-\alpha}\big)}\\ 
					&+ C  \lVert u^{(n)} \rVert_{L^{1}\big((0,T);B_{2,1}^{1+\frac{d}{2}}\big)} \\.
    \end{aligned}
\end{equation*}

\end{lem}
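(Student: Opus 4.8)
The plan is to reuse the blueprint of the proof of Lemma~\ref{u(n+1)}, in a much shortened form, since the $\theta^{(n+1)}$-equation in~(\ref{sys2}) carries \emph{no} dissipation ($\kappa=0$): consequently $\theta^{(n+1)}$ cannot be placed in any $L^{1}_{T}$-space of higher regularity, and the whole bound has to be closed in $\widetilde L^{\infty}_{T}B_{2,1}^{1+\frac d2-\alpha}$ using only the transport structure and the linear source $u^{(n)}\cdot e_{d}$.

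First I would fix an integer $j\ge-1$, apply $\Delta_{j}$ to the $\theta^{(n+1)}$-equation in~(\ref{sys2}), and take the $L^{2}$ inner product with $\Delta_{j}\theta^{(n+1)}$; the absence of a dissipative term gives $\frac12\frac{d}{dt}\|\Delta_{j}\theta^{(n+1)}\|_{L^{2}}^{2}=C_{1}+C_{2}$, with $C_{1}$ and $C_{2}$ the integrals defined just before the statement. For $C_{1}$ I would apply the second inequality of Lemma~\ref{lem2} with $u=u^{(n)}$ and $v=\theta^{(n+1)}$: since $\Div u^{(n)}=0$ the transport term collapses to commutator-type quantities, bounded by $\|\Delta_{j}\theta^{(n+1)}\|_{L^{2}}$ times the bracket $K_{1}+K_{2}+K_{3}$. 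For $C_{2}$, because $e_{d}$ is a constant vector $\Delta_{j}(u^{(n)}\cdot e_{d})$ is a component of $\Delta_{j}u^{(n)}$, so Cauchy--Schwarz gives $|C_{2}|\le\|\Delta_{j}u^{(n)}\|_{L^{2}}\|\Delta_{j}\theta^{(n+1)}\|_{L^{2}}$, i.e. $\|\Delta_{j}\theta^{(n+1)}\|_{L^{2}}$ times $K_{4}$. Dividing out $\|\Delta_{j}\theta^{(n+1)}\|_{L^{2}}$ yields $\frac{d}{dt}\|\Delta_{j}\theta^{(n+1)}\|_{L^{2}}\le K_{1}+K_{2}+K_{3}+K_{4}$, uniformly in $j\ge-1$ since no exponential factor occurs. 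Integrating in $\tau$ over $[0,t]\subset[0,T]$ with $\theta^{(n+1)}(0)=S_{n+1}\theta_{0}$, taking the supremum over $t\in(0,T)$, multiplying by $2^{(1+\frac d2-\alpha)j}$, and summing over $j\ge-1$, reduces matters to estimating the four series $\sum_{j\ge-1}2^{(1+\frac d2-\alpha)j}\int_{0}^{T}K_{i}\,d\tau$, $i=1,\dots,4$; the initial-data contribution is exactly $\|\theta_{0}^{(n+1)}\|_{B_{2,1}^{1+\frac d2-\alpha}}$.

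Next I would treat those four series exactly as $I_{1}$--$I_{7}$ are treated in Lemma~\ref{u(n+1)}, the only difference being the target exponent $1+\frac d2-\alpha$ in place of $1+\frac d2-2\alpha$. For $K_{1}$, bounding $\sum_{m\le j-1}2^{(1+\frac d2)m}\|\Delta_{m}u^{(n)}\|_{L^{2}}$ by $\|u^{(n)}\|_{B_{2,1}^{1+\frac d2}}$ and then using H\"older in $\tau$ gives $C\|u^{(n)}\|_{L^{1}_{T}B_{2,1}^{1+\frac d2}}\|\theta^{(n+1)}\|_{\widetilde L^{\infty}_{T}B_{2,1}^{1+\frac d2-\alpha}}$. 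For $K_{2}$ one writes $2^{(1+\frac d2-\alpha)j}2^{(1+\frac d2)m}=2^{(1+\frac d2)j}\,2^{(1+\frac d2-\alpha)m}\,2^{-\alpha(j-m)}$ and applies Young's inequality for series convolutions, the kernel $2^{-\alpha(j-m)}$ being summable over $m\le j-1$ because $\alpha>0$; the same product results. For $K_{3}$ one factors out $2^{(2+\frac d2-\alpha)(j-k)}$ and applies Young's inequality once more, the kernel being summable over $j\le k+4$ as soon as $2+\frac d2-\alpha>0$, a condition comfortably implied by $\alpha<1+\frac d4$; again one obtains $C\|u^{(n)}\|_{L^{1}_{T}B_{2,1}^{1+\frac d2}}\|\theta^{(n+1)}\|_{\widetilde L^{\infty}_{T}B_{2,1}^{1+\frac d2-\alpha}}$. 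Finally $K_{4}$ is the linear source: $\sum_{j\ge-1}2^{(1+\frac d2-\alpha)j}\|\Delta_{j}u^{(n)}\|_{L^{2}}=\|u^{(n)}\|_{B_{2,1}^{1+\frac d2-\alpha}}\le C\|u^{(n)}\|_{B_{2,1}^{1+\frac d2}}$ by the Besov embedding ($\alpha>0$), so integrating over $[0,T]$ yields $C\|u^{(n)}\|_{L^{1}_{T}B_{2,1}^{1+\frac d2}}$. Collecting these four bounds and inserting them into the summed inequality of the previous step gives the claimed estimate.

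I do not expect a genuine obstacle here: this estimate is strictly simpler than the $u$- and $B$-estimates, there being fewer terms and no dissipation to track. The only point worth stressing is conceptual: with $\kappa=0$ there is no smoothing to exploit, so the regularity index of $\theta^{(n+1)}$ is pinned at $1+\frac d2-\alpha$, exactly one derivative below the $L^{1}_{T}$-scale $B_{2,1}^{1+\frac d2}$ controlling $u^{(n)}$; the derivative lost in the transport term $u^{(n)}\cdot\nabla\theta^{(n+1)}$ is recovered purely through the commutator gain built into Lemma~\ref{lem2}, which is precisely why this term closes without any dissipative help. In this lemma the standing hypothesis $1\le\alpha<1+\frac d4$ is used only through the non-binding requirement $\alpha<2+\frac d2$ needed for the convergence of the high-high series $K_{3}$.
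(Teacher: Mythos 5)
Your proposal is correct and follows essentially the same route as the paper: the same dyadic energy identity $\tfrac12\tfrac{d}{dt}\|\Delta_j\theta^{(n+1)}\|_{L^2}^2=C_1+C_2$, the same use of Lemma \ref{lem2} and H\"older for $C_1$, $C_2$, and the same treatment of $K_1$--$K_4$ (low-high by H\"older, the $2^{-\alpha(j-m)}$ and $2^{(2+\frac d2-\alpha)(j-k)}$ kernels with Young's inequality for series, and the Besov embedding for the source term). Your remark that only the harmless condition $\alpha<2+\frac d2$ is needed for the high-high sum here is a correct and slightly sharper observation than what the paper records.
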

\begin{proof}
Let $j \geq -1  $ be an integer. Applying $\Delta_{j}$ to (\ref{sys2})$_{3}$ and then dotting the equation with $\Delta_{j} \theta^{(n+1)}$,
we have
\begin{equation}\label{eq30}
    \dfrac{1}{2}\dfrac{d}{dt} \lVert \Delta_{j}\theta^{(n+1)} \rVert_{L^{2}}^{2} = C_{1}+C_{2}.
\end{equation}
According to Lemma (\ref{lem2}), it holds that
\begin{equation}\label{eq32}
    \begin{aligned}
     |C_{1}| &\leq  C \lVert \Delta_{j}\theta^{(n+1)}\rVert_{L^{2}} \bigg(\sum_{m\leq j-1}2^{(1+\frac{d}{2})m} \lVert \Delta_{m} u^{(n)} \rVert _{L^{2}}\sum_{|j-k|\leq 2}\lVert \Delta_{k}\theta^{(n+1)} \rVert _{L^{2}}\\
    &+\sum_{|j-k|\leq 2}\lVert \Delta_{k} u^{(n)} \rVert _{L^{2}}  \sum_{m\leq j-1}2^{(1+\frac{d}{2})m} \lVert \Delta_{m} \theta^{(n+1)} \rVert_{L^{2}}\\
   & +\sum_{k\geq j-4} 2^{j}2^{\frac{d}{2}k}
\lVert \Delta_{k} u^{(n)} \rVert _{L^{2}}\lVert \tilde{\Delta}_{k} \theta^{(n+1)} \rVert _{L^{2}} \bigg).
\end{aligned}
\end{equation}
By H\"older inequality, $C_{2}$ can be bounded by
\begin{equation}\label{eq33}
    |C_{2}|\leq C \lVert \Delta_{j} u^{(n)}\rVert_{L^{2}}\lVert \Delta_{j} \theta^{(n+1)}\rVert_{L^{2}}
\end{equation}
Inserting the estimates (\ref{eq32}) and (\ref{eq33}) into the equality (\ref{eq30}), then eliminating $\lVert \Delta_{j} \theta^{(n+1)}\rVert_{L^{2}}$ from both sides of the inequality, we have
\begin{equation}\label{eq34}
      \frac{d}{dt} \lVert \Delta_{j} \theta^{(n+1)}\rVert_{L^{2}} \leq K_{1}+\cdots+K_{4}
\end{equation}
Integrating (\ref{eq34}) in time for $ t\in [0,T]$ we have
\begin{equation}\label{eq36}
    \lVert \Delta_{j} \theta^{(n+1)}\rVert_{L^{2}} \leq  \lVert \Delta_{j} \theta_{0}^{(n+1)}\rVert_{L^{2}} + \int_{0}^{t} (K_{1}+\cdots+K_{4} ) d\tau.
\end{equation}
Taking the $L^{\infty}(0,T)$  of (\ref{eq36}), multiplying by $2^{(1+\frac{d}{2}-\alpha)j}$ and summing over $j$ , we get
\begin{equation}\label{eq37}
      \lVert \theta^{(n+1))} \rVert_{\tilde{L}^{\infty}\big((0,T);B_{2,1}^{1+\frac{d}{2}-\alpha}\big)}  \leq \lVert\theta_{0}^{(n+1)}\rVert_{B_{2,1}^{1+\frac{d}{2}-\alpha}} +\sum_{j\geq -1} 2^{(1+\frac{d}{2}-\alpha)j}\int_{0}^{t} (K_{1}+\cdots+K_{4} ) d\tau.
\end{equation}
The terms involving $K_{1}$ through $K_{4} $ can be bounded as $I_{1}-I_{7}$
\begin{equation}\label{eq38}
    \begin{aligned}
          \sum_{j\geq -1} 2^{(1+\frac{d}{2}-\alpha)j} \int_{0}^{T} K_{1} d\tau &\leq C \int_{0}^{T} \sum_{j\geq -1} 2^{(1+\frac{d}{2}-\alpha)j} \lVert \Delta_{j} \theta^{(n+1)} \rVert _{L^{2}} \sum_{m\leq j-1}2^{(1+\frac{d}{2})m} \lVert \Delta_{m} u^{(n)} \rVert _{L^{2}} 
    \\
    &\leq C \lVert \theta^{(n+1)} \rVert_{\tilde{L}^{\infty}\big((0,T);B_{2,1}^{1+\frac{d}{2}-\alpha}\big)} \lVert u^{(n)} \rVert_{L^{1}\big((0,T);B_{2,1}^{1+\frac{d}{2}}\big)}, \\
    \end{aligned}
\end{equation}
\begin{equation}\label{eqd}
    \begin{aligned}
          \sum_{j\geq -1} 2^{(1+\frac{d}{2}-\alpha)j} \int_{0}^{T} K_{2} d\tau &\leq C \int_{0}^{T} \sum_{j\geq -1} 2^{(1+\frac{d}{2}-\alpha)j} \lVert \Delta_{j} u^{(n)} \rVert _{L^{2}} \sum_{m\leq j-1}2^{(1+\frac{d}{2})m} \lVert \Delta_{m} \theta^{(n+1)} \rVert _{L^{2}} 
    \\
    &\leq C \int_{0}^{T} \sum_{j\geq -1} 2^{(1+\frac{d}{2})j} \lVert \Delta_{j} u^{(n)} \rVert _{L^{2}} \sum_{m\leq j-1} 2^{-\alpha(j-m)}2^{(1+\frac{d}{2}-\alpha)m} \lVert \Delta_{m} \theta^{(n+1)} \rVert _{L^{2}}    \\
    &\leq C \lVert \theta^{(n+1)} \rVert_{\tilde{L}^{\infty}\big((0,T);B_{2,1}^{1+\frac{d}{2}-\alpha}\big)} \lVert u^{(n)} \rVert_{L^{1}\big((0,T);B_{2,1}^{1+\frac{d}{2}}\big)}, \\
    \end{aligned}
\end{equation}

  \begin{equation}\label{eq39}
      \begin{aligned}
            \sum_{j\geq -1} 2^{(1+\frac{d}{2}-\alpha)j} \int_{0}^{T} K_{3} d\tau &\leq C \int_{0}^{T}  \sum_{j\geq -1} 2^{(1+\frac{d}{2}-\alpha)j} \sum_{k\geq j-4} 2^{j}2^{\frac{d}{2}k}
\lVert \Delta_{k} u^{(n)} \rVert _{L^{2}}\lVert \tilde{\Delta}_{k} \theta^{(n+1)} \rVert _{L^{2}} d\tau
\\
&\leq
C \int_{0}^{T} \sum_{j\geq -1} \sum_{k\geq j-4} 2^{(2+\frac{d}{2}-\alpha)(j-k)} 2^{(1+\frac{d}{2})k}\lVert \Delta_{k} u^{(n)} \rVert _{L^{2}} 2^{(1+\frac{d}{2}-\alpha)k}\lVert \tilde{\Delta}_{k} \theta^{(n+1)} \rVert _{L^{2}} d\tau
 \\
& \leq C \lVert \theta^{(n+1)} \rVert_{\tilde{L}^{\infty}\big((0,T);B_{2,1}^{1+\frac{d}{2}-\alpha}\big)} \lVert u^{(n)} \rVert_{L^{1}\big((0,T);B_{2,1}^{1+\frac{d}{2}}\big)},
      \end{aligned}
  \end{equation}

\begin{equation}\label{eq40}
    \begin{aligned}
      \sum_{j\geq -1} 2^{(1+\frac{d}{2}-\alpha)j} \int_{0}^{T} K_{4} d\tau 
      &\leq C \int_{0}^{T} \sum_{j\geq -1} 2^{-\alpha j} 2^{(1+\frac{d}{2})j} \lVert \Delta_{j} u^{(n)}\rVert_{L^{2}} d\tau\\ &\leq C  \lVert u^{(n)} \rVert_{L^{1}\big((0,T);B_{2,1}^{1+\frac{d}{2}}\big)} \\
    \end{aligned}
\end{equation}
 \end{proof}
\par Now, we estimate the term $\lVert u^{(n+1)}\rVert_{L^{1}((0,T);B_{2,1}^{1+\frac{d}{2}}(\mathbb{R}^{d}))}$
\begin{lem}\label{u2(n+1)}
The following estimate holds
\begin{equation*}
    \begin{aligned}
\lVert  u^{(n+1)}\rVert_{L^{1}(0,T,B_{2,1}^{1+\frac{d}{2}})} \leq &\dfrac{\delta}{2}
		+ CT\lVert u^{(n+1)} \rVert_{\tilde{L}^{\infty}\big((0,T);B_{2,1}^{1+\frac{d}{2}-2\alpha}\big)}  \lVert u^{(n)} \rVert_{L^{1}\big((0,T);B_{2,1}^{1+\frac{d}{2}}\big)}\\
		&+CT \lVert B^{(n)} \rVert_{\tilde{L}^{\infty}\big((0,T);B_{2,1}^{1+\frac{d}{2}-2\alpha}\big)} \lVert B^{(n)} \rVert_{L^{1}\big((0,T);B_{2,1}^{1+\frac{d}{2}}\big)}\\
		&+ CT^{2}\lVert\theta^{(n)} \rVert_{\tilde{L}^{\infty}\big((0,T);B_{2,1}^{1+\frac{d}{2}-\alpha}\big)}.
\end{aligned}
\end{equation*}
\end{lem}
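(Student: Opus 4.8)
The plan is to rerun, term by term, the dyadic energy computation already carried out for Lemma~\ref{u(n+1)}, but now measuring $u^{(n+1)}$ in $L^{1}\big((0,T);B_{2,1}^{1+\frac{d}{2}}\big)$ instead of $\widetilde{L}^{\infty}\big((0,T);B_{2,1}^{1+\frac{d}{2}-2\alpha}\big)$; the two extra derivatives are supplied by the parabolic smoothing of the semigroup $e^{-\mu(-\Delta)^{\alpha}t}$. Concretely, I start from the pointwise-in-time Duhamel bounds (\ref{eq6})--(\ref{eq7}) established inside the proof of Lemma~\ref{u(n+1)}: for $j\ge0$,
\[
\|\Delta_{j}u^{(n+1)}(t)\|_{L^{2}}\le e^{-C_{0}2^{2\alpha j}t}\,\|\Delta_{j}u_{0}^{(n+1)}\|_{L^{2}}+\int_{0}^{t}e^{-C_{0}2^{2\alpha j}(t-\tau)}\,(I_{1}+\cdots+I_{7})(\tau)\,d\tau ,
\]
together with the same inequality without the exponential factors for $j=-1$. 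I integrate this in $t$ over $[0,T]$, apply Fubini to the Duhamel term, use $\int_{0}^{T}e^{-C_{0}2^{2\alpha j}s}\,ds\le\min\{T,(C_{0}2^{2\alpha j})^{-1}\}$, multiply by $2^{(1+\frac{d}{2})j}$, and sum over $j\ge-1$.

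\textbf{The free term and the $\delta/2$.} The initial-data contribution is
\[
\sum_{j\ge-1}2^{(1+\frac{d}{2})j}\Big(\int_{0}^{T}e^{-C_{0}2^{2\alpha j}t}\,dt\Big)\|\Delta_{j}u_{0}^{(n+1)}\|_{L^{2}}\le \frac{C}{C_{0}}\sum_{j\ge-1}2^{(1+\frac{d}{2}-2\alpha)j}\|\Delta_{j}u_{0}\|_{L^{2}},
\]
so each summand is dominated, uniformly in $n$ (since $\|\Delta_{j}S_{n+1}u_{0}\|_{L^{2}}\lesssim\|\Delta_{j}u_{0}\|_{L^{2}}$), by a term of the convergent series defining $\|u_{0}\|_{B_{2,1}^{1+\frac{d}{2}-2\alpha}}$, while for each fixed $j$ the factor $\int_{0}^{T}e^{-C_{0}2^{2\alpha j}t}\,dt\to0$ as $T\to0$. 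By dominated convergence the whole sum can be made $\le\delta/2$ by choosing $T$ small, independently of $n$; this is precisely what replaces the explicit $\|u_{0}\|$ term that appeared in Lemma~\ref{u(n+1)}.

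\textbf{The nonlinear terms.} For the quadratic pieces $I_{1},\dots,I_{6}$ I spend the time-kernel bound as $(C_{0}2^{2\alpha j})^{-1}$, which turns the weight $2^{(1+\frac{d}{2})j}$ back into $2^{(1+\frac{d}{2}-2\alpha)j}$; the dyadic sums (\ref{eq9})--(\ref{eq14}) then apply essentially verbatim (spending, where the stated form requires it, one further factor of the kernel bound as $\int_{0}^{T}e^{-\cdots}\le T$), producing the terms $CT\,\|u^{(n+1)}\|_{\widetilde{L}^{\infty}(0,T;B_{2,1}^{1+\frac{d}{2}-2\alpha})}\|u^{(n)}\|_{L^{1}(0,T;B_{2,1}^{1+\frac{d}{2}})}$ and $CT\,\|B^{(n)}\|_{\widetilde{L}^{\infty}(0,T;B_{2,1}^{1+\frac{d}{2}-2\alpha})}\|B^{(n)}\|_{L^{1}(0,T;B_{2,1}^{1+\frac{d}{2}})}$; the hypothesis $\alpha<1+\frac{d}{4}$ is exactly what keeps the remainder sums $I_{3},I_{6}$ summable through Young's convolution inequality, as in (\ref{eq11}) and (\ref{eq14}). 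For the buoyancy term $I_{7}=C\|\Delta_{j}\theta^{(n)}\|_{L^{2}}$, writing $2^{(1+\frac{d}{2})j}=2^{\alpha j}2^{(1+\frac{d}{2}-\alpha)j}$ and using the $2^{-\alpha j}$ gain from $(C_{0}2^{2\alpha j})^{-1}$ together with $\int_{0}^{T}(\cdot)\le T$ (twice) gives $CT^{2}\,\|\theta^{(n)}\|_{\widetilde{L}^{\infty}(0,T;B_{2,1}^{1+\frac{d}{2}-\alpha})}$, as in (\ref{eq15}). Collecting these estimates with the $\delta/2$ from the free term yields the inequality of the lemma.

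\textbf{Main obstacle.} The one genuinely delicate point is the $\delta/2$ bound for the free evolution: the crude estimate $(C_{0})^{-1}\|u_{0}\|_{B_{2,1}^{1+\frac{d}{2}-2\alpha}}\le\frac{M}{2C_{0}}$ need not be $\le\delta/2$, so the required smallness must be extracted from the vanishing, as $T\to0$, of the semigroup factor $\int_{0}^{T}e^{-C_{0}2^{2\alpha j}t}\,dt$ via dominated convergence, and this has to be carried out uniformly in the approximation index $n$. Everything else is a rerun of the dyadic sums already performed for Lemma~\ref{u(n+1)}, with the smoothing factor $2^{-2\alpha j}$ taking over the role of the weight absent in the higher-order norm $B_{2,1}^{1+\frac{d}{2}}$.
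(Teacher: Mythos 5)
Your argument coincides with the paper's own proof: it starts from the Duhamel bounds (\ref{eq6})--(\ref{eq7}), integrates in time, secures the $\delta/2$ for the free evolution via dominated convergence for small $T$ (uniformly in $n$, treating $j=-1$ separately), and recycles the dyadic estimates (\ref{eq9})--(\ref{eq15}) for $I_{1},\dots,I_{7}$, exactly as the paper does in (\ref{eq44})--(\ref{eq52}). Even the delicate point you flag — extracting both the smoothing factor $2^{-2\alpha j}$ and the extra prefactor $T$ (resp.\ $T^{2}$ for the $\theta$-term) from the single kernel bound $\int_{0}^{T}e^{-C_{0}2^{2\alpha j}s}\,ds\le\min\{T,\,C2^{-2\alpha j}\}$ — is treated in the same way (and with the same looseness) as in the paper's passage to (\ref{eq51}).
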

\begin{proof}
Multiplying (\ref{eq6}) by $ 2^{(1+\frac{d}{2})j}$, summing $j $ over $j \geq 0 $ and integrating in time, we obtain
\begin{equation}\label{eq44}
\begin{aligned}
     \sum_{j\geq 0} 2^{(1+\frac{d}{2})j}\lVert \Delta_{j} u^{(n+1)}\rVert_{L^{1}(0,T,L^{2})} 
     &\leq \int_{0}^{T} \sum_{j\geq 0} 2^{(1+\frac{d}{2})j} e^{-c_{0}2^{2 \alpha j}t} \lVert \Delta_{j} u_{0}^{(n+1)}\rVert_L^{2} d\tau \\
    &+ \int_{0}^{T} \sum_{j\geq 0} 2^{(1+\frac{d}{2})j} \int_{0}^{s}e^{-c_{0}2^{2 \alpha j}(s-\tau)}(I_{1}+\cdots+I_{7}) d\tau ds.
\end{aligned}
\end{equation}
Since,
\begin{eqnarray*}\label{eq45}
    \int_{0}^{T} \sum_{j\geq 0} 2^{(1+\frac{d}{2})j} e^{-c_{0}2^{2 \alpha j}t} \lVert \Delta_{j} u_{0}^{(n+1)}\rVert_{L^{2}} dt = C \sum_{j\geq 0} 2^{(1+\frac{d}{2}-2\alpha)j} (1-e^{-c_{0}2^{2 \alpha j}T}) \lVert \Delta_{j} u_{0}^{(n+1)}\rVert_L^{2},  
\end{eqnarray*}
and since $u_{0}\in B_{2,1}^{(1+\frac{d}{2}-2\alpha}$, it follows from the dominated convergence theorem that
\begin{equation*}\label{46}
    \lim\limits_{T \rightarrow 0} \sum_{j\geq 0} 2^{(1+\frac{d}{2}-2\alpha)j} (1-e^{-c_{0}2^{2 \alpha j}T}) \lVert \Delta_{j} u_{0}^{(n+1)}\rVert_L^{2}=0.
\end{equation*}
For $j=-1$,  multiplying (\ref{eq7}) by $2^{-(1+\frac{d}{2})}$ and integrating in time, it yields
\begin{equation}
    \begin{aligned}\label{eq47}
            2^{-(1+\frac{d}{2})}\lVert \Delta_{-1} u^{(n+1)}\rVert_{L^{1}(0,T,L^{2})} &\leq 2^{-(1+\frac{d}{2})} \int_{0}^{T} \lVert \Delta_{-1} u_{0}^{(n+1)}\rVert_{L^{2}} d\tau \\
            &+  2^{-(1+\frac{d}{2})} \int_{0}^{T} \int_{0}^{s} (I_{1}+\cdots+I_{7}) d\tau ds
    \\
    &=\Bigg[2^{-2\alpha} \int_{0}^{T} 2^{-(1+\frac{d}{2}-2\alpha)} \lVert \Delta_{-1} u_{0}^{(n+1)}\rVert_{L^{2}} d\tau\\
   & +  2^{-2\alpha} \int_{0}^{T} 2^{-(1+\frac{d}{2}-2\alpha)}  \int_{0}^{s} (I_{1}+\cdots+I_{7}) d\tau ds\Bigg]. 
    \end{aligned}
\end{equation}
Clearly,
\begin{eqnarray*}\label{eq48}
    2^{-2\alpha} \int_{0}^{T} 2^{-(1+\frac{d}{2}-2\alpha)} \lVert \Delta_{-1} u_{0}^{(n+1)}\rVert_{L^{2}} d\tau \leq 2^{-2\alpha} T \lVert u_{0}^{(n+1)} \rVert_{B_{2,1}^{1+\frac{d}{2}-2\alpha}}
\end{eqnarray*}
Therefore, we can choose $T$ sufficiently small such that
\begin{equation*}\label{eq49}
       \int_{0}^{T} \sum_{j\geq 0} 2^{(1+\frac{d}{2})j} e^{-c_{0}2^{2 \alpha j}t} \lVert \Delta_{j} u_{0}^{(n+1)}\rVert_{L^{2}} dt+  2^{-2\alpha} \int_{0}^{T} 2^{-(1+\frac{d}{2}-2\alpha)} \lVert \Delta_{-1} u_{0}^{(n+1)}\rVert_{L^{2}} dt \leq \dfrac{\delta}{2}.
\end{equation*}
By Young's inequality for the time convolution, gathering (\ref{eq44}) and (\ref{eq47}) and since
\begin{equation*}\label{eq50}
     \int_{0}^{T} e^{-c_{0}2^{2\alpha j}s} ds \leq C(1-e^{-c_{0}T})2^{-2\alpha j}
\end{equation*}
we get
\begin{equation}\label{eq51}
      \lVert  u^{(n+1)}\rVert_{L^{1}(0,T,B_{2,1}^{1+\frac{d}{2}})} \leq \dfrac{\delta}{2} + C T \int_{0}^{T} \sum_{j\geq -1} 2^{(1+\frac{d}{2}-2\alpha)j}(I_{1}+\cdots+I_{7}) d\tau.
\end{equation}
We estimate the terms involving $I_{1}-I_{5}$. Arguing similarly as deriving (\ref{eq9})-(\ref{eq15})
\begin{equation}
    \begin{aligned}\label{eq52}
    C T \int_{0}^{T} \sum_{j\geq -1} 2^{(1+\frac{d}{2}-2\alpha)j}(I_{1}+\cdots+I_{7}) d\tau &\leq CT\lVert u^{(n+1)} \rVert_{\tilde{L}^{\infty}\big((0,T);B_{2,1}^{1+\frac{d}{2}-2\alpha}\big)}  \lVert u^{(n)} \rVert_{L^{1}\big((0,T);B_{2,1}^{1+\frac{d}{2}}\big)}\\
		&+CT \lVert B^{(n)} \rVert_{\tilde{L}^{\infty}\big((0,T);B_{2,1}^{1+\frac{d}{2}-2\alpha}\big)} \lVert B^{(n)} \rVert_{L^{1}\big((0,T);B_{2,1}^{1+\frac{d}{2}}\big)}\\
		&+ CT^{2}\lVert\theta^{(n)} \rVert_{\tilde{L}^{\infty}\big((0,T);B_{2,1}^{1+\frac{d}{2}-\alpha}\big)}.
\end{aligned}
\end{equation}

Inserting the estimate (\ref{eq52}) into (\ref{eq51}), we get
\begin{equation*}
    \begin{aligned}
    \lVert  u^{(n+1)}\rVert_{L^{1}(0,T,B_{2,1}^{1+\frac{d}{2}})} &\leq \dfrac{\delta}{2}
		+ CT\lVert u^{(n+1)} \rVert_{\tilde{L}^{\infty}\big((0,T);B_{2,1}^{1+\frac{d}{2}-2\alpha}\big)}  \lVert u^{(n)} \rVert_{L^{1}\big((0,T);B_{2,1}^{1+\frac{d}{2}}\big)}\\
		&+CT \lVert B^{(n)} \rVert_{\tilde{L}^{\infty}\big((0,T);B_{2,1}^{1+\frac{d}{2}-2\alpha}\big)} \lVert B^{(n)} \rVert_{L^{1}\big((0,T);B_{2,1}^{1+\frac{d}{2}}\big)}\\
		&+ CT^{2}\lVert\theta^{(n)} \rVert_{\tilde{L}^{\infty}\big((0,T);B_{2,1}^{1+\frac{d}{2}-\alpha}\big)}.
\end{aligned}
\end{equation*}

\end{proof}
\par Finally, we estimate the term $\lVert B^{(n+1)}\rVert_{L^{1}((0,T);B_{2,1}^{1+\frac{d}{2}}(\mathbb{R}^{d}))}$.
\begin{lem}\label{B2(n+1)}
The following estimate holds:
\begin{equation*}
     \lVert  B^{(n+1)}\rVert_{L^{1}(0,T,B_{2,1}^{1+\frac{d}{2}})} \leq \dfrac{\delta}{2} + CT\delta \lVert B^{(n+1)} \rVert_{\tilde{L}^{\infty}\big((0,T);B_{2,1}^{1+\frac{d}{2}-2\alpha}\big)} +CT\delta M.
\end{equation*}
\end{lem}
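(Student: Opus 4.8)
The plan is to mirror the proof of Lemma~\ref{u2(n+1)}, now starting from the Duhamel-type formula \eqref{eq21} for $\Delta_j B^{(n+1)}$. First I would multiply \eqref{eq21} by $2^{(1+\frac d2)j}$, sum over $j\ge 0$, and integrate in time over $[0,T]$; the low-frequency block $j=-1$ carries no dissipative gain, so it is handled separately by integrating \eqref{eq20} (with the $j=-1$ dissipation term dropped) exactly as in the derivation of \eqref{eq47}. The resulting data contribution equals $C\sum_{j\ge 0}2^{(1+\frac d2-2\alpha)j}\bigl(1-e^{-C_1 2^{2\alpha j}T}\bigr)\lVert\Delta_j B_0^{(n+1)}\rVert_{L^2}$ plus a $j=-1$ piece bounded by $2^{-2\alpha}T\,\lVert B_0^{(n+1)}\rVert_{B_{2,1}^{1+\frac d2-2\alpha}}$; since $B_0\in B_{2,1}^{1+\frac d2-2\alpha}$, the dominated convergence theorem lets me choose $T$ small enough that this whole data term is at most $\delta/2$.

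For the nonlinear contribution I would apply Young's inequality for the time convolution together with the elementary bound $\int_0^T e^{-C_1 2^{2\alpha j}s}\,ds\le C\bigl(1-e^{-C_1 T}\bigr)2^{-2\alpha j}\le CT\,2^{-2\alpha j}$. This reproduces, up to the extra prefactor $T$, precisely the frequency sums $CT\int_0^T\sum_{j\ge -1}2^{(1+\frac d2-2\alpha)j}\bigl(J_1+\cdots+J_6\bigr)\,d\tau$ that were already controlled in \eqref{eq23}--\eqref{eq28}. Quoting those bounds: the terms $J_1,J_2,J_3$, originating from $u^{(n)}\cdot\nabla B^{(n+1)}$, are dominated by $C\,\lVert B^{(n)}\rVert_{L^1(0,T;B_{2,1}^{1+\frac d2})}\,\lVert B^{(n+1)}\rVert_{\tilde{L}^{\infty}(0,T;B_{2,1}^{1+\frac d2-2\alpha})}$, so with the extra $T$ and $\lVert B^{(n)}\rVert_{L^1(0,T;B_{2,1}^{1+\frac d2})}\le\delta$ (from the definition of $Y$) they contribute $CT\delta\,\lVert B^{(n+1)}\rVert_{\tilde{L}^{\infty}((0,T);B_{2,1}^{1+\frac d2-2\alpha})}$; the terms $J_4,J_5,J_6$, originating from $B^{(n)}\cdot\nabla u^{(n)}$, are dominated by $C\,\lVert B^{(n)}\rVert_{\tilde{L}^{\infty}(0,T;B_{2,1}^{1+\frac d2-2\alpha})}\,\lVert u^{(n)}\rVert_{L^1(0,T;B_{2,1}^{1+\frac d2})}$ together with the symmetric variant with the roles of $u^{(n)}$ and $B^{(n)}$ exchanged, so with the extra $T$ and the bounds $\lVert B^{(n)}\rVert_{\tilde{L}^{\infty}}\le M$, $\lVert u^{(n)}\rVert_{\tilde{L}^{\infty}}\le M$, $\lVert B^{(n)}\rVert_{L^1}\le\delta$, $\lVert u^{(n)}\rVert_{L^1}\le\delta$ they contribute $CT\delta M$. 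As in \eqref{eq25} and \eqref{eq28}, it is the hypothesis $\alpha<1+\frac d4$, equivalently $2+\frac d2-2\alpha>0$, that keeps the geometric sums $\sum_{k\ge j-4}2^{(2+\frac d2-2\alpha)(j-k)}$ arising in the $J_3$ and $J_6$ estimates convergent.

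Collecting the data term and the three groups of nonlinear contributions and absorbing all constants into a single $C$ yields exactly $\lVert B^{(n+1)}\rVert_{L^1(0,T;B_{2,1}^{1+\frac d2})}\le\frac\delta2+CT\delta\,\lVert B^{(n+1)}\rVert_{\tilde{L}^{\infty}((0,T);B_{2,1}^{1+\frac d2-2\alpha})}+CT\delta M$. The only genuinely delicate point — and the main obstacle — is the bookkeeping of the double time integral: one must verify that after Young's inequality the leftover time weight is an honest extra factor $T$ rather than an $O(1)$ constant, which is precisely what distinguishes this $L^1_T$ estimate from the $\tilde{L}^{\infty}_T$ estimate \eqref{eq22} and is what makes the right-hand side small as $T\to 0$. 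Everything else is a line-by-line transcription of the already established estimates \eqref{eq23}--\eqref{eq28}.
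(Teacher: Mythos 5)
Your proposal is correct and follows essentially the same route as the paper: the paper's proof of Lemma \ref{B2(n+1)} likewise multiplies \eqref{eq21} by $2^{(1+\frac{d}{2})j}$, sums over $j\geq0$, integrates in time, treats the data term and the $j=-1$ block "corresponding to" Lemma \ref{u2(n+1)} (i.e.\ exactly the dominated-convergence and \eqref{eq47}-type argument you describe, with the extra factor $T$ extracted as in \eqref{eq50}--\eqref{eq51}), and then bounds $CT\int_0^T\sum_{j\ge-1}2^{(1+\frac{d}{2}-2\alpha)j}(J_1+\cdots+J_6)\,d\tau$ by quoting \eqref{eq23}--\eqref{eq28} together with the $Y$-bounds, giving $CT\delta\lVert B^{(n+1)}\rVert_{\tilde{L}^{\infty}((0,T);B_{2,1}^{1+\frac{d}{2}-2\alpha})}+CT\delta M$. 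The delicate point you flag (that the leftover time weight is a genuine factor $T$) is handled in the paper only by reference to the same step in Lemma \ref{u2(n+1)}, so your write-up is, if anything, more explicit than the original.
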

\begin{proof}
Multiplying (\ref{eq21}) by $2^{(1+\frac{d}{2})j}$, summing j over $j\geq 0 $ and integrating in time, we get
\begin{equation*}
    \begin{aligned}
          \sum_{j \geq0} 2^{(1+\frac{d}{2})j}\lVert B^{(n+1)} \rVert_{L^{1}(0,T;L^{2})}
          &\leq \int_{0}^{T} \sum_{j \geq0} 2^{(1+\frac{d}{2})j} e^{-c_{1}2^{2\alpha j}t} \lVert \Delta_{j} B_{0}^{(n+1)}\rVert_{L^{2}} d\tau \\
          &+ \int_{0}^{T} \sum_{j \geq0} 2^{(1+\frac{d}{2})j} \int_{0}^{s} e^{-c_{1}2^{2\alpha j}(s-\tau)} (J_{1}+\cdots+J_{6}) d\tau ds.
    \end{aligned}
\end{equation*}
Corresponding to the estimate of $\lVert u^{(n+1)}\rVert_{L^{1}((0,T);B_{2,1}^{1+\frac{d}{2}}(\mathbb{R}^{d}))}$,  we can choose $T$ sufficiently small such that
\begin{equation}\label{eq55}
    \lVert B^{(n+1)}\rVert_{L^{1}(0,T,B_{2,1}^{1+\frac{d}{2}})} \leq \dfrac{\delta}{2} + C T \int_{0}^{T} \sum_{j\geq -1} 2^{(1+\frac{d}{2}-2\alpha)j}(J_{1}+\cdots+J_{6}) d\tau. 
\end{equation}
We estimate the terms involving $J_{1}-J_{5}$. Using the same reasoning to get $(\ref{eq23})-(\ref{eq28})$
\begin{equation}\label{eq56}
        C T \int_{0}^{T} \sum_{j\geq -1} 2^{(1+\frac{d}{2}-2\alpha)j}(I_{1}+\cdots+I_{7}) d\tau \leq CT\delta  \lVert B^{(n+1)} \rVert_{\tilde{L}^{\infty}\big((0,T);B_{2,1}^{1+\frac{d}{2}-2\alpha}\big)} +CT \delta M.
\end{equation}
Putting the estimate (\ref{eq56}) into (\ref{eq55}) yields the desired outcome.
\end{proof}
\par We prove that the sequence ${(u^{(n)},v^{(n)},\theta^{(n)})}$ has a subsequence that converges to the weak solution.
\begin{lem}
 The sequence ${(u^{(n)},v^{(n)},\theta^{(n)})}$ is uniform bound in $Y$.
\end{lem}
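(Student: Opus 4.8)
The plan is to prove the uniform bound by induction on $n$, feeding the induction hypothesis $(u^{(n)},B^{(n)},\theta^{(n)})\in Y$ into the five a priori estimates of Lemma \ref{u(n+1)}, the estimate \eqref{eq29} for $B^{(n+1)}$, Lemma \ref{estimatetheta(n+1)}, Lemma \ref{u2(n+1)} and Lemma \ref{B2(n+1)}. The order in which the two smallness parameters are frozen is essential: first fix $\delta\in(0,1)$ small, depending only on $M$ and the universal constants $C$ appearing in those lemmas; then fix $T>0$ small, depending on the already–chosen $\delta$, on $M$, and on the same constants. I shall repeatedly use that $S_m$ is a truncation of a series of nonnegative dyadic pieces, so $\|S_mf\|_{B_{2,1}^{s}}\le\|f\|_{B_{2,1}^{s}}$ for every $s\in\RR$; in particular $\|u_0^{(n+1)}\|_{B_{2,1}^{1+\frac d2-2\alpha}}=\|S_{n+1}u_0\|_{B_{2,1}^{1+\frac d2-2\alpha}}\le\|u_0\|_{B_{2,1}^{1+\frac d2-2\alpha}}\le\frac M2$, and likewise for $B_0^{(n+1)}$ and $\theta_0^{(n+1)}$ in their spaces. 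If $u_0=B_0=\theta_0=0$ then $M=0$, every iterate vanishes, and the statement is trivial, so I may assume $M>0$.

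\emph{Base case $n=1$.} Here $u^{(1)}=S_1u_0$, $B^{(1)}=S_1B_0$, $\theta^{(1)}=S_1\theta_0$ are independent of $t$, hence their $\tilde{L}^{\infty}(0,T;\cdot)$ norms coincide with their Besov norms, which are $\le\frac M2\le M$. Since $S_1f$ is spectrally localized in a fixed ball, Bernstein's inequality (Lemma \ref{lem1}) gives $\|S_1u_0\|_{B_{2,1}^{1+\frac d2}}\le C_0\|u_0\|_{B_{2,1}^{1+\frac d2-2\alpha}}$, so $\|u^{(1)}\|_{L^1(0,T;B_{2,1}^{1+\frac d2})}\le C_0T\|u_0\|_{B_{2,1}^{1+\frac d2-2\alpha}}\le\delta$ once $T$ is small, and similarly for $B^{(1)}$. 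Thus $(u^{(1)},B^{(1)},\theta^{(1)})\in Y$.

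\emph{Inductive step.} Assume $(u^{(n)},B^{(n)},\theta^{(n)})\in Y$. Inserting $\|B^{(n)}\|_{\tilde{L}^{\infty}}\le M$, $\|B^{(n)}\|_{L^1}\le\delta$ and $\|\theta^{(n)}\|_{\tilde{L}^{\infty}}\le M$ into Lemma \ref{u(n+1)} yields
$$\|u^{(n+1)}\|_{\tilde{L}^{\infty}(0,T;B_{2,1}^{1+\frac d2-2\alpha})}\le \tfrac M2+C\delta\,\|u^{(n+1)}\|_{\tilde{L}^{\infty}(0,T;B_{2,1}^{1+\frac d2-2\alpha})}+CM\delta+CTM ,$$
and, the left–hand side being finite on $[0,T]$ by the linear theory used to construct the iterates, the term $C\delta\|u^{(n+1)}\|$ may be absorbed provided $C\delta\le\tfrac12$; imposing in addition $2C\delta+CT\le\tfrac12$ gives $\|u^{(n+1)}\|_{\tilde{L}^{\infty}(0,T;B_{2,1}^{1+\frac d2-2\alpha})}\le M$. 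The same absorption applied to the estimate \eqref{eq29} and to Lemma \ref{estimatetheta(n+1)}, using again the induction hypothesis, gives $\|B^{(n+1)}\|_{\tilde{L}^{\infty}(0,T;B_{2,1}^{1+\frac d2-2\alpha})}\le M$ and $\|\theta^{(n+1)}\|_{\tilde{L}^{\infty}(0,T;B_{2,1}^{1+\frac d2-\alpha})}\le M$, at the cost of further (but $T$–independent) smallness of $\delta$ relative to $M$. Finally, feeding these three $\tilde{L}^{\infty}$ bounds together with the induction hypothesis into Lemma \ref{u2(n+1)} gives
$$\|u^{(n+1)}\|_{L^1(0,T;B_{2,1}^{1+\frac d2})}\le\tfrac\delta2+2CTM\delta+CT^2M\le\delta$$
once $T$ is small (depending on $\delta$ and $M$); likewise Lemma \ref{B2(n+1)} gives $\|B^{(n+1)}\|_{L^1(0,T;B_{2,1}^{1+\frac d2})}\le\tfrac\delta2+2CT\delta M\le\delta$ for $T$ small. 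Hence $(u^{(n+1)},B^{(n+1)},\theta^{(n+1)})\in Y$, which closes the induction and proves the uniform bound.

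\emph{Main obstacle.} The only genuinely delicate point is bookkeeping the logical order of the smallness constraints: $\delta$ must be frozen first — small compared to $M$ and to the absolute constants — so that each factor $(1-C\delta)^{-1}$ produced by the absorptions still leaves the output strictly below the threshold $M$ (resp. $\delta$); only afterwards is $T$ shrunk, now depending on that fixed $\delta$, to defeat the contributions carrying powers of $T$ (the buoyancy term $CT\|\theta^{(n)}\|$, the transport term $CT^{2}\|\theta^{(n)}\|$, and the $CT\delta M$ remainders) and to settle the base case. One should also make explicit that the absorptions are legitimate because each $(u^{(n+1)},B^{(n+1)},\theta^{(n+1)})$ is obtained by solving linear (fractional–)parabolic and transport equations driven by the previous iterate, so it a priori lies — with a finite, if not yet controlled, norm on $[0,T]$ — in the spaces defining $Y$; and that the hypothesis $\alpha<1+\frac d4$ is precisely what renders the geometric series inside Lemma \ref{u(n+1)}, the estimate \eqref{eq29} and Lemma \ref{estimatetheta(n+1)} convergent.
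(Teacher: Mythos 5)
Your proposal is correct and follows essentially the same route as the paper: induction on $n$, with the base case handled via the spectral localization of $S_1$ and Bernstein's inequality, and the inductive step obtained by feeding the hypothesis $(u^{(n)},B^{(n)},\theta^{(n)})\in Y$ into Lemmas \ref{u(n+1)}, \ref{estimatetheta(n+1)}, \ref{u2(n+1)}, \ref{B2(n+1)} and the estimate (\ref{eq29}), absorbing the $C\delta\|\cdot\|$ terms and then shrinking $T$ (the paper absorbs after summing the three $\tilde L^\infty$ estimates with $C\delta\le\min(\frac18,\frac M8)$, $CT\le\frac18$, while you absorb each one separately — an immaterial difference). Your explicit remarks on the order of fixing $\delta$ before $T$, the a priori finiteness needed for the absorption, and the role of $\alpha<1+\frac d4$ are sound refinements of the same argument rather than a different approach.
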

\begin{proof}
by induction.
Recall that $(u_{0},v_{0})\in B_{2,1}^{1+\frac{d}{2}-2\alpha}(\mathbb{R}^{d}), \theta_{0}\in B_{2,1}^{1+\frac{d}{2}-\alpha}(\mathbb{R}^{d})$, according to 
\begin{eqnarray*}
    u^{(1)}=S_{1}u_{0},\ \ B^{(1)}=S_{1}B_{0},\ \ \theta^{(1)}=S_{1}\theta_{0}.
\end{eqnarray*}
Clearly
\begin{equation*}
    \begin{aligned}
          &\|u^{(1)}\|_{\tilde{L}^{\infty}(0,T;B_{2,1}^{1+\frac{d}{2}-2\alpha})}=\|S_{1}u_{0}\|_{B_{2,1}^{1+\frac{d}{2}-2\alpha}}&\leq M, \\
&\|B^{(1)}\|_{\tilde{L}^{\infty}(0,T;B_{2,1}^{1+\frac{d}{2}-2\alpha})}=\|S_{1}B_{0}\|_{B_{2,1}^{1+\frac{d}{2}-2\alpha}}&\leq M, \\
&\|\theta^{(1)}\|_{\tilde{L}^{\infty}(0,T;B_{2,1}^{1+\frac{d}{2}-\alpha})}=\|S_{1}\theta_{0}\|_{B_{2,1}^{1+\frac{d}{2}-\alpha}}&\leq M.
    \end{aligned}
\end{equation*}
 If $T>0$ is sufficiently small, then
\begin{eqnarray*}
    \begin{aligned}
        &\|u^{(1)}\|_{L^{1}(0,T;B_{2,1}^{1+\frac{d}{2}})}\leq T\|S_{1}u_{0}\|_{B_{2,1}^{1+\frac{d}{2}}}\leq TC\|u_{0}\|_{B_{2,1}^{1+\frac{d}{2}-2\alpha}}\leq\delta, \\
&\|B^{(1)}\|_{L^{1}(0,T;B_{2,1}^{1+\frac{d}{2}})}\leq T\|S_{1}B_{0}\|_{B_{2,1}^{1+\frac{d}{2}}}\leq TC\|B_{0}\|_{B_{2,1}^{1+\frac{d}{2}-2\alpha}}\leq\delta.
    \end{aligned}
\end{eqnarray*}
Assuming that ${(u^{(n)},B^{(n)},\theta^{(n)})}$ obeys the bounds defined in $Y$, namely
\begin{equation*}
    \begin{aligned}
        &\|u^{(n)}\|_{\tilde{L}^{\infty}(0,T;B_{2,1}^{1+\frac{d}{2}-2\alpha})}\leq M, \quad \|B^{(n)}\|_{\tilde{L}^{\infty}(0,T;B_{2,1}^{1+\frac{d}{2}-2\alpha})}\leq M, \quad \|\theta^{(n)}\|_{\tilde{L}^{\infty}(0,T;B_{2,1}^{1+\frac{d}{2}-\alpha})}\leq M,\\
        &\|u^{(n)}\|_{L^{1}(0,T;B_{2,1}^{1+\frac{d}{2}})}\leq\delta \quad
        \|B^{(n)}\|_{L^{1}(0,T;B_{2,1}^{1+\frac{d}{2}})}\leq\delta.
    \end{aligned}
\end{equation*}
We demonstrate that ${(u^{(n+1)},v^{(n+1)},\theta^{(n+1)})}$ fulfills the equivalent bounds for the aforementioned $T>0$ and $M>0$, specifically
\begin{equation*}
    \begin{aligned}
        &\|u^{(n+1)}\|_{\tilde{L}^{\infty}(0,T;B_{2,1}^{1+\frac{d}{2}-2\alpha})}\leq M,\\  
				&\|B^{(n+1)}\|_{\tilde{L}^{\infty}(0,T;B_{2,1}^{1+\frac{d}{2}-2\alpha})}\leq M, \\
				&\|\theta^{(n+1)}\|_{\tilde{L}^{\infty}(0,T;B_{2,1}^{{1+\frac{d}{2}-\alpha}})}\leq M,\\
&\|u^{(n+1)}\|_{L^{1}(0,T;B_{2,1}^{1+\frac{d}{2}})}\leq\delta,\\
&\|B^{(n+1)}\|_{L^{1}(0,T;B_{2,1}^{1+\frac{d}{2}})}\leq\delta.
    \end{aligned}
\end{equation*}
Lemma \ref{u(n+1)} gives 
\begin{equation}\label{eq16}
     \lVert u^{(n+1)} \rVert_{\tilde{L}^{\infty}\big((0,T);B_{2,1}^{1+\frac{d}{2}-2\alpha}\big)} \leq \lVert u_{0}^{(n+1)} \rVert_{B_{2,1}^{1+\frac{d}{2}-2\alpha}} + C \delta \lVert u^{(n+1)} \rVert_{\tilde{L}^{\infty}\big((0,T);B_{2,1}^{1+\frac{d}{2}-2\alpha}\big)} + C\delta M +CTM.
\end{equation}
Collecting the estimates (\ref{eq23})-(\ref{eq28}) and inserting them into (\ref{eq22}), we get
\begin{equation}\label{eq29}
     \lVert B^{(n+1)} \rVert_{\tilde{L}^{\infty}\big((0,T);B_{2,1}^{1+\frac{d}{2}-2\alpha}\big)} \leq \lVert B_{0}^{(n+1)} \rVert_{B_{2,1}^{1+\frac{d}{2}-2\alpha}} +  C \delta \lVert B^{(n+1)} \rVert_{\tilde{L}^{\infty}\big((0,T);B_{2,1}^{1+\frac{d}{2}-2\alpha}\big)} + C\delta M.
\end{equation}
Lemma \ref{estimatetheta(n+1)} states that by adding estimates (\ref{eq38})-(\ref{eq40}) to (\ref{eq37}), we get
\begin{equation}\label{eq41}
    \begin{aligned}
          \lVert \theta^{(n+1)} \rVert_{\tilde{L}^{\infty}\big((0,T);B_{2,1}^{1+\frac{d}{2}-\alpha}\big)} &\leq \lVert \theta_{0}^{(n+1)} \rVert_{B_{2,1}^{1+\frac{d}{2}-\alpha}}+C\delta  \lVert \theta^{(n+1)} \rVert_{\tilde{L}^{\infty}\big((0,T);B_{2,1}^{1+\frac{d}{2}-\alpha}\big)} +C\delta+C\delta M.
    \end{aligned}
\end{equation}
Thus, combing the estimates (\ref{eq16}), (\ref{eq29}) and (\ref{eq41}), we obtain
  \begin{equation}\label{eq42}
      \begin{aligned}
\lVert (u^{(n+1)},B^{(n+1)})\rVert_{\tilde{L}^{\infty}(0,T;B_{2,1}^{1+\frac{d}{2}-2\alpha})}
+\lVert\theta^{(n+1)}\rVert_{\tilde{L}^{\infty}(0,T;B_{2,1}^{1+\frac{d}{2}-\alpha})}
&\leq\|(u_{0}^{(n+1)},B_{0}^{(n+1)})\|_{B_{2,1}^{1+\frac{d}{2}-2\alpha}}+\|\theta_{0}^{(n+1)}\|_{B_{2,1}^{1+\frac{d}{2}-\alpha}} \\
& + C\delta\|(u^{(n+1)},B^{(n+1)})\|_{\tilde{L}^{\infty}(0,T;B_{2,1}^{1+\frac{d}{2}-2\alpha})}\\
&+C\delta\|\theta^{(n+1)}\|_{\tilde{L}^{\infty}(0,T;B_{2,1}^{1+\frac{d}{2}-\alpha})}\\
&+C\delta M+C\delta+CTM.
 \end{aligned}
 \end{equation}
Choosing $C\delta\leq \min(\frac{1}{8},\frac{M}{8})$ and $CT\leq\frac{1}{8}$, we have
\begin{equation}
    \begin{aligned}\label{eq43}
     \|(u^{(n+1)},B^{(n+1)})\|_{\tilde{L}^{\infty}(0,T;B_{2,1}^{1+\frac{d}{2}-2\alpha})}
+\|\theta^{(n+1)}\|_{\tilde{L}^{\infty}(0,T;B_{2,1}^{1+\frac{d}{2}-\alpha})}
&\leq\frac{M}{2}+\frac{1}{8}\|(u^{(n+1)},B^{(n+1)})\|_{\tilde{L}^{\infty}(0,T;B_{2,1}^{1+\frac{d}{2}-2\alpha})}\\
&+\frac{1}{8}\|\theta^{(n+1)}\|_{\tilde{L}^{\infty}(0,T;B_{2,1}^{1+\frac{d}{2}-\alpha})}\\
&+\frac{3M}{8}.
    \end{aligned}
\end{equation}
It is evident through simplification that
\begin{align}
\nonumber\|(u^{(n+1)},B^{(n+1)})\|_{\tilde{L}^{\infty}(0,T;B_{2,1}^{1+\frac{d}{2}-2\alpha})}\leq M, \ \ \ \ \|\theta^{(n+1)}\|_{\tilde{L}^{\infty}(0,T;B_{2,1}^{1+\frac{d}{2}-\alpha})}\leq M.
\end{align}
It is implied by the property of Chemin-Lerner type Besov spaces (see definition (\ref{def1})):
\begin{align}
\nonumber&\|(u^{(n+1)},B^{(n+1)})\|_{L^{\infty}(0,T;B_{2,1}^{1+\frac{d}{2}-2\alpha})}\leq M, \ \ \ \ \|\theta^{(n+1)}\|_{L^{\infty}(0,T;B_{2,1}^{1+\frac{d}{2}-\alpha})}\leq M.
\end{align}
According to the Lemma \ref{u2(n+1)} we get
\begin{eqnarray*}\label{eq53}
    \lVert  u^{(n+1)}\rVert_{L^{1}(0,T,B_{2,1}^{1+\frac{d}{2}})} \leq \dfrac{\delta}{2} + CT \delta M+ CT^{2}M.
\end{eqnarray*}
Choosing $T$ sufficiently small such that $CT \leq \min(\dfrac{1}{4M},\dfrac{\delta}{4TM})$ it holds that
\begin{equation*}\label{eq54}
     \lVert  u^{(n+1)}\rVert_{L^{1}(0,T,B_{2,1}^{1+\frac{d}{2}})} \leq \dfrac{\delta}{2} +\dfrac{\delta}{4}+\dfrac{\delta}{4}=\delta.
\end{equation*}
Lemma \ref{B2(n+1)} gives
\begin{equation*}
     \lVert  B^{(n+1)}\rVert_{L^{1}(0,T,B_{2,1}^{1+\frac{d}{2}})} \leq \dfrac{\delta}{2} + CT \delta M.
\end{equation*}
Choosing $T$ sufficiently small such that $CT \leq \dfrac{1}{2M}$ it holds that
\begin{equation*}
    \lVert B^{(n+1)}\rVert_{L^{1}(0,T,B_{2,1}^{1+\frac{d}{2}})} \leq \dfrac{\delta}{2} +\dfrac{\delta}{2}=\delta.
\end{equation*}
\end{proof}
\section{Proof of the existence part}
Denote by $\stackrel{\ast}{\rightharpoonup}$ the $weak^{*}$ convergence. Using the uniform bounds above, we can extract a weakly convergent subsequence based on $T$. There exists $(u,B,\theta) \in Y $ such that a subsequence of $(u^{(n)},B^{(n)},\theta^{(n)})  $(still denoted by $(u^{(n)},B^{(n)},\theta^{(n)})$) satisfies:
\begin{eqnarray*}
    (u^{(n)},B^{(n)})\stackrel{\ast}{\rightharpoonup} (u,B) \;\;\mbox{ in }\;\; L^{1}(0,T,B_{2,1}^{1+\frac{d}{2}}) \cap L^{\infty}\big((0,T);B_{2,1}^{1+\frac{d}{2}-2\alpha}\big),
    \\
    \theta^{(n)}\stackrel{\ast}{\rightharpoonup} \theta \;\;\mbox{ in }\;\ L^{\infty}\big((0,T);B_{2,1}^{1+\frac{d}{2}-\alpha}\big)
\end{eqnarray*}
Furthermore, we may demonstrate by using the equation
(\ref{sys2}) that $(\partial_{t}u^{n},\partial_{t}B^{n},\partial_{t}\theta^{n})$ is uniformly bounded
\\
\textbf{$$(\partial_{t} u^{n},\partial_{t} B^{n})\in L^{1}\big((0,T);B_{2,1}^{1+\frac{d}{2}-2\alpha}\big).
$$ }
\begin{proof}
Using definition (\ref{def1}) and according to (\ref{sys2})$_{1}$, we get
\begin{equation*}
    \begin{aligned}
         \int_{0}^{T} \lVert \partial_{t} u^{(n+1)}\rVert_{B_{2,1}^{1+\frac{d}{2}-2\alpha}} d\tau 
        &\leq  \int_{0}^{T} \lVert (-\Delta)^{\alpha}u^{(n+1)}\rVert_{B_{2,1}^{1+\frac{d}{2}-2\alpha}} d\tau 
         + \int_{0}^{T} \lVert u^{(n)}\cdot \nabla u^{(n+1)}\rVert_{B_{2,1}^{1+\frac{d}{2}-2\alpha}} d\tau\\
         &+\int_{0}^{T} \lVert B^{(n)}\cdot \nabla B^{(n+1)}\rVert_{B_{2,1}^{1+\frac{d}{2}-2\alpha}} d\tau  +\int_{0}^{T} \lVert \theta^{(n)} e_{d}\rVert_{B_{2,1}^{1+\frac{d}{2}-2\alpha}} d\tau. 
    \end{aligned}
\end{equation*}
Applying Bernstein's inequality (see lemma \ref{lem1}) to the $\Delta_{j}u^{(n+1)}$, multiplying by $2^{(1+\frac{d}{2}-2\alpha)j}$ and summing over $j$, choosing $L^{1}(0,T)$-norm, we derive
\begin{equation}\label{eq57}
    \begin{aligned}
        \int_{0}^{T} \lVert (-\Delta)^{\alpha}u^{(n+1)}\rVert_{B_{2,1}^{1+\frac{d}{2}-2\alpha}} d\tau &\leq C_{1} \int_{0}^{T} \lVert u^{(n+1)}\rVert_{B_{2,1}^{1+\frac{d}{2}}} d\tau \\
        &= C_{1}\lVert u^{(n+1)}\rVert_{L^{1}{(0,T;B_{2,1}^{1+\frac{d}{2}})}}\\
        &\leq C_{1} \delta.
    \end{aligned}
\end{equation}
The estimation of the right-hand side terms is similar; we use an example to provide proof.
\begin{equation*}
    \begin{aligned}
         \int_{0}^{T} \lVert u^{(n)}\cdot \nabla u^{(n+1)}\rVert_{B_{2,1}^{1+\frac{d}{2}-2\alpha}} d\tau&\leq \int_{0}^{T} \sum_{m\geq -1}2^{(1+\frac{d}{2}-2\alpha)j}\Bigg[\sum_{m\leq j-1}2^{j}2^{\frac{d}{2}m} \lVert \Delta_{m} u^{(n)} \rVert _{L^{2}}\sum_{|j-k|\leq 2}\lVert \Delta_{k}u^{(n+1)} \rVert _{L^{2}} \\ &+
    \sum_{|j-k|\leq 2}\lVert \Delta_{k} u^{(n)} \rVert _{L^{2}}  \sum_{m\leq j-1}2^{(1+\frac{d}{2})m} \lVert \Delta_{m} u^{(n+1)} \rVert_{L^{2}}
   \\ &+
    \sum_{k\geq j-4} 2^{j}2^{\frac{d}{2}k}
\lVert \Delta_{k} u^{(n)} \rVert _{L^{2}}\lVert \tilde{\Delta}_{k} u^{(n+1)} \rVert _{L^{2}}  \Bigg]^{2} d\tau.
    \end{aligned}
\end{equation*}
The procedure is analogous to the estimates (\ref{eq9})-(\ref{eq15}) and the property of Chemin-Lerner type Besov spaces (see definition (\ref{def1})). So,
\begin{equation}\label{eq58}
\begin{aligned}
     \int_{0}^{T} \lVert u^{(n)}\cdot \nabla u^{(n+1)}\rVert_{B_{2,1}^{1+\frac{d}{2}-2\alpha}} d\tau  
     &\leq 
    C \lVert u^{(n)} \rVert_{L^{\infty}\big((0,T);B_{2,1}^{1+\frac{d}{2}-2\alpha})} \lVert u^{(n+1)} \rVert_{L^{1}\big((0,T);B_{2,1}^{1+\frac{d}{2}})}\\
    &\leq C \delta M.
\end{aligned}
Also,
\end{equation}
\begin{equation}\label{eq59}
    \begin{aligned}
     \int_{0}^{T} \lVert B^{(n)}\cdot \nabla B^{(n+1)}\rVert_{B_{2,1}^{1+\frac{d}{2}-2\alpha}} d\tau 
     &\leq 
    C \lVert B^{(n)} \rVert_{L^{\infty}\big((0,T);B_{2,1}^{1+\frac{d}{2}-2\alpha})} \lVert B^{(n+1)} \rVert_{L^{1}\big((0,T);B_{2,1}^{1+\frac{d}{2}})}\\
    &\leq C \delta M
\end{aligned}
\end{equation}
and
  \begin{equation}\label{eq60}
       \begin{aligned}
 \int_{0}^{T} \lVert \theta^{(n)} e_{d}\rVert_{B_{2,1}^{1+\frac{d}{2}-2\alpha}} d\tau 
 &\leq \int_{0}^{T} \lVert \theta^{(n)}\rVert_{B_{2,1}^{1+\frac{d}{2}-2\alpha}} d\tau \\
 &\leq C T \lVert \theta^{(n)}\rVert_{L^{\infty}\big((0,T);B_{2,1}^{1+\frac{d}{2}-2\alpha})}\\
 &\leq CT M.
       \end{aligned}
   \end{equation}
By combining the estimations (\ref{eq57}) and (\ref{eq60}), we get
\begin{equation}\label{eq61}
    \begin{aligned}
        \int_{0}^{T} \lVert \partial_{t} u^{(n+1)}\rVert_{B_{2,1}^{1+\frac{d}{2}-2\alpha}} d\tau 
        &\leq C_{1}\delta+C\delta M+ CTM < \infty.
    \end{aligned}
\end{equation}
The definition (\ref{def1}) and the equation (\ref{sys2})$_{2}$ give
\begin{equation*}
    \begin{aligned}
    \int_{0}^{T} \lVert \partial_{t} B^{(n+1)}\rVert_{B_{2,1}^{1+\frac{d}{2}-2\alpha}} d\tau  
    &\leq  \int_{0}^{T} \lVert (-\Delta)^{\alpha}B^{(n+1)}\rVert_{B_{2,1}^{1+\frac{d}{2}-2\alpha}} d\tau\\
    &+ \int_{0}^{T} \lVert u^{(n)}\cdot \nabla B^{(n+1)}\rVert_{B_{2,1}^{1+\frac{d}{2}-2\alpha}} d\tau+
    \int_{0}^{T} \lVert B^{(n)}\cdot \nabla u^{(n)}\rVert_{B_{2,1}^{1+\frac{d}{2}-2\alpha}} d\tau.
    \end{aligned}
\end{equation*}
By implementing Bernstein's inequality (see lemma \ref{lem1}) to the $\Delta_{j}B^{(n+1)}$, multiplying by $2^{(1+\frac{d}{2}-2\alpha)j}$ and summing over $j$, we establish
\begin{equation}\label{eq62}
    \begin{aligned}
        \int_{0}^{T} \lVert (-\Delta)^{\alpha}B^{(n+1)}\rVert_{B_{2,1}^{1+\frac{d}{2}-2\alpha}} d\tau &\leq C_{1} \int_{0}^{T} \lVert B^{(n+1)}\rVert_{B_{2,1}^{1+\frac{d}{2}}} d\tau \\
        &= C_{1}\lVert B^{(n+1)}\rVert_{L^{1}{(0,T;B_{2,1}^{1+\frac{d}{2}})}}\\
        &\leq C_{1} \delta
    \end{aligned}
\end{equation}
The process is analogous to the estimates (\ref{eq23})-(\ref{eq28}) and the property of Chemin-Lerner type Besov spaces (see definition (\ref{def1})). We obtain
\begin{equation}\label{eq63}
    \begin{aligned}
     \int_{0}^{T} \lVert u^{(n)}\cdot \nabla B^{(n+1)}\rVert_{B_{2,1}^{1+\frac{d}{2}-2\alpha}} d\tau 
     &\leq 
    C \lVert u^{(n)} \rVert_{L^{\infty}\big((0,T);B_{2,1}^{1+\frac{d}{2}-2\alpha})} \lVert B^{(n+1)} \rVert_{L^{1}\big((0,T);B_{2,1}^{1+\frac{d}{2}})}\\
    &\leq C \delta M.
\end{aligned}
\end{equation}
\begin{equation}\label{eq64}
    \begin{aligned}
     \int_{0}^{T} \lVert B^{(n)}\cdot \nabla u^{(n)}\rVert_{B_{2,1}^{1+\frac{d}{2}-2\alpha}} d\tau 
     &\leq 
    C \lVert B^{(n)} \rVert_{L^{\infty}\big((0,T);B_{2,1}^{1+\frac{d}{2}-2\alpha})} \lVert u^{(n)} \rVert_{L^{1}\big((0,T);B_{2,1}^{1+\frac{d}{2}})}\\
    &\leq C \delta M.
\end{aligned}
\end{equation}
Finally, combing the estimates (3.17), (3.28) and (3.38), we get
\begin{equation}\label{eq65}
    \begin{aligned}
        \int_{0}^{T} \lVert \partial_{t} B^{(n+1)}\rVert_{B_{2,1}^{1+\frac{d}{2}-2\alpha}} d\tau 
        &\leq C_{1}\delta+C\delta M < \infty.
    \end{aligned}
\end{equation}
Consequently,
$$
\partial_{t} \theta^{n}\in L^{2}\big((0,T);B_{2,1}^{1+\frac{d}{2}-2\alpha}\big).
$$
\begin{proof}
Definition (\ref{def1}) and  equation (\ref{sys2})$_{3}$ give
\begin{equation}\label{eq66}
      \int_{0}^{T} \lVert \partial_{t} \theta^{(n+1)}\rVert_{B_{2,1}^{1+\frac{d}{2}-2\alpha}}^{2} d\tau \leq  \int_{0}^{T} \lVert u^{(n)}\cdot \nabla \theta^{(n+1)}\rVert_{B_{2,1}^{1+\frac{d}{2}-2\alpha}}^{2} d\tau +    \int_{0}^{T} \lVert u^{(n)}\cdot e_{d}\rVert_{B_{2,1}^{1+\frac{d}{2}-2\alpha}}^{2} d\tau,
\end{equation}
and
\begin{equation}\label{eq67}
     \begin{aligned}
        \int_{0}^{T} \lVert u^{(n)}\cdot e_{d}\rVert_{B_{2,1}^{1+\frac{d}{2}-2\alpha}}^{2} d\tau 
    &\leq \int_{0}^{T} \lVert u^{(n)}\rVert_{B_{2,1}^{1+\frac{d}{2}-2\alpha}}^{2} d\tau  \\
    &\leq C T \lVert u^{(n)}\rVert_{\widetilde{L}^{\infty}\big((0,T);B_{2,1}^{1+\frac{d}{2}-2\alpha})}^{2}\\
    &\leq CT M^{2}.
    \end{aligned}
\end{equation}
    Now, we estiblish the estimate of
    \begin{equation}\label{eq68}
        \begin{aligned}
               \int_{0}^{t} \lVert u^{(n)}\cdot \nabla \theta^{(n+1)}\rVert_{B_{2,1}^{1+\frac{d}{2}-2\alpha}}^{2} d\tau 
    &\leq  \int_{0}^{T} \Bigg[\sum_{m\geq -1}2^{(1+\frac{d}{2}-2\alpha)j}\big(\sum_{m\leq j-1}2^{j}2^{\frac{d}{2}m} \lVert \Delta_{m} u^{(n)} \rVert _{L^{2}}\sum_{|j-k|\leq 2}\lVert \Delta_{k}\theta^{(n+1)} \rVert _{L^{2}} \\
    & +  \sum_{|j-k|\leq 2}\lVert \Delta_{k} u^{(n)} \rVert _{L^{2}}  \sum_{m\leq j-1}2^{(1+\frac{d}{2})m} \lVert \Delta_{m} \theta^{(n+1)} \rVert_{L^{2}}
   \\
   & + \sum_{k\geq j-4} 2^{j}2^{\frac{d}{2}k}
\lVert \Delta_{k} u^{(n)} \rVert _{L^{2}}\lVert \tilde{\Delta}_{k} \theta^{(n+1)} \rVert _{L^{2}} \big) \Bigg]^{2} d\tau.
        \end{aligned}
    \end{equation}
Note that the terms on the right hand side can be bounded respectively by
\begin{equation*}
    \begin{aligned}
         &  R_{1}= \int_{0}^{T} \bigg(\sum_{m\geq -1}2^{(1+\frac{d}{2}-2\alpha)j}\sum_{m\leq j-1}2^{j}2^{\frac{d}{2}m} \lVert \Delta_{m} u^{(n)} \rVert _{L^{2}}\sum_{|j-k|\leq 2}\lVert \Delta_{k}\theta^{(n+1)} \rVert _{L^{2}}  \bigg)^{2} d\tau 
   \\
& R_{2}= \int_{0}^{T} \bigg(\sum_{m\geq -1}2^{(1+\frac{d}{2}-2\alpha)j}  \sum_{|j-k|\leq 2}\lVert \Delta_{k} u^{(n)} \rVert _{L^{2}}  \sum_{m\leq j-1}2^{(1+\frac{d}{2})m} \lVert \Delta_{m} \theta^{(n+1)} \rVert_{L^{2}}\bigg)^{2} d\tau 
  \\
& R_{3}= \int_{0}^{T} \bigg(\sum_{m\geq -1}2^{(1+\frac{d}{2}-2\alpha)j} \sum_{k\geq j-4} 2^{j}2^{\frac{d}{2}k}
\lVert \Delta_{k} u^{(n)} \rVert _{L^{2}}\lVert \tilde{\Delta}_{k} \theta^{(n+1)} \rVert _{L^{2}} \bigg)^{2} d\tau. 
    \end{aligned}
\end{equation*}
Also,
    \begin{equation}\label{eq69}
        \begin{aligned}
   R_{1} &\leq \int_{0}^{T}\big(\sum_{m\geq -1}\sum_{m\leq j-1}2^{(\alpha-1)(m-j)}2^{(1+\frac{d}{2}-\alpha)j}\lVert \Delta_{j}\theta^{(n+1)} \rVert _{L^{2}}2^{(1+\frac{d}{2}-\alpha)m}\lVert \Delta_{m} u^{(n)} \rVert _{L^{2}}\big)^{2}d\tau \\
  &\leq
   C \lVert \theta^{(n+1)} \rVert_{\tilde{L}^{\infty}\big((0,T);B_{2,1}^{1+\frac{d}{2}-\alpha})}^{2} \lVert u^{(n)} \rVert_{\tilde{L}^{2}\big((0,T);B_{2,1}^{1+\frac{d}{2}-\alpha})}^{2}
    \end{aligned}
    \end{equation}
		and
      \begin{equation}\label{eq70}
        \begin{aligned}
   R_{2} &\leq \int_{0}^{T}\big(\sum_{m\geq -1}\sum_{m\leq j-1}2^{\alpha(m-j)}2^{(1+\frac{d}{2}-\alpha)j}\lVert \Delta_{j}u^{(n)} \rVert _{L^{2}}2^{(1+\frac{d}{2}-\alpha)m}\lVert \Delta_{m} \theta^{(n+1)} \rVert _{L^{2}}\big)^{2}d\tau \\
  &\leq
   C \lVert \theta^{(n+1)} \rVert_{\tilde{L}^{\infty}\big((0,T);B_{2,1}^{1+\frac{d}{2}-\alpha})}^{2} \lVert u^{(n)} \rVert_{\tilde{L}^{2}\big((0,T);B_{2,1}^{1+\frac{d}{2}-\alpha})}^{2}
   \end{aligned}
    \end{equation}
     \begin{equation}\label{eq71}
        \begin{aligned}
   R_{3} &\leq \int_{0}^{T}\big(\sum_{m\geq -1}\sum_{k\geq j-4}2^{(2+\frac{d}{2}-2\alpha)(j-k)}2^{(1+\frac{d}{2}-\alpha)k}\lVert \Delta_{k}u^{(n)} \rVert _{L^{2}}2^{(1+\frac{d}{2}-\alpha)k}\lVert \tilde{\Delta}_{k} \theta^{(n+1)} \rVert _{L^{2}}\big)^{2}d\tau \\
  &\leq
   C \lVert \theta^{(n+1)} \rVert_{\tilde{L}^{\infty}\big((0,T);B_{2,1}^{1+\frac{d}{2}-\alpha})}^{2} \lVert u^{(n)} \rVert_{\tilde{L}^{2}\big((0,T);B_{2,1}^{1+\frac{d}{2}-\alpha})}^{2}\;\;\mbox{with}\;\;\alpha < 1+\dfrac{d}{4}.
        \end{aligned}
    \end{equation}
Since
$ u \in L^{\infty}\big((0,T);B_{2,1}^{1+\frac{d}{2}-2\alpha}) \cap L^{1}\big((0,T);B_{2,1}^{1+\frac{d}{2}})$, by a simple interpolation inequality (see for example \cite{Tr}). Also, $$ u\in L^{2}\big((0,T);B_{2,1}^{1+\frac{d}{2}-\alpha}).$$ Therefore, combining with (\ref{eq69})-(\ref{eq71}) and inserting them into (\ref{eq68}) we obtain
\begin{equation*}
    \begin{aligned}
    \int_{0}^{T} \lVert \partial_{t} \theta^{(n+1)}\rVert_{B_{2,1}^{1+\frac{d}{2}-2\alpha}}^{2} d\tau
    &\leq
    \int_{0}^{T} \lVert u^{(n)}\cdot \nabla \theta^{(n+1)}\rVert_{B_{2,1}^{1+\frac{d}{2}-2\alpha}}^{2} d\tau +    \int_{0}^{T} \lVert u^{(n)}\cdot e_{d}\rVert_{B_{2,1}^{1+\frac{d}{2}-2\alpha}}^{2} d\tau  \\
    &\leq 
    C \lVert \theta^{(n+1)} \rVert_{\tilde{L}^{\infty}\big((0,T);B_{2,1}^{1+\frac{d}{2}-\alpha})}^{2} \lVert u^{(n)} \rVert_{\tilde{L}^{2}\big((0,T);B_{2,1}^{1+\frac{d}{2}-\alpha})}^{2}+ CTM^{2} \\ &\leq 
    C\delta^{2} M^{2}+ CTM^{2} \\
    & < \infty.
\end{aligned}
\end{equation*}
\end{proof}
\end{proof}
To sum up, we prove that $(\partial_{t} u^{n},\partial_{t}  B^{n},\partial_{t}  \theta^{n})$ is uniformly bounded
\begin{equation*}
    \begin{aligned}
   & \partial_{t} u^{n} \in L^{1}\big((0,T);B_{2,1}^{1+\frac{d}{2}-2\alpha}\big), 
     \\
     & \partial_{t}  B^{n} \in L^{1}\big((0,T);B_{2,1}^{1+\frac{d}{2}-2\alpha}\big), 
     \\
     &\partial_{t}  \theta^{n} \in L^{2}\big((0,T);B_{2,1}^{1+\frac{d}{2}-2\alpha}\big).
    \end{aligned}
\end{equation*}
Let $\{\chi_{j}\}_{j\in \mathbb{N}}$
be a sequence of smooth functions with value in $[0,1]$ supported in the ball
$B(0,j+1)$ . For any $j\in\mathbb{Z}$, we have
\begin{eqnarray*}
    \chi_{j}u^{(n)}\in L^{1}(0,T,B_{2,1}^{1+\frac{d}{2}}) \cap L^{\infty}\big((0,T);B_{2,1}^{1+\frac{d}{2}-2\alpha}\big) , \partial_{t}(  \chi_{j}u^{(n)})\in L^{1}\big((0,T);B_{2,1}^{1+\frac{d}{2}-2\alpha}\big),
\\
    \chi_{j}B^{(n)}\in L^{1}(0,T,B_{2,1}^{1+\frac{d}{2}}) \cap L^{\infty}\big((0,T);B_{2,1}^{1+\frac{d}{2}-2\alpha}\big) , \partial_{t}(  \chi_{j}B^{(n)})\in L^{1}\big((0,T);B_{2,1}^{1+\frac{d}{2}-2\alpha}\big),
    \\
     \chi_{j}\theta^{(n)}\in L^{\infty}\big((0,T);B_{2,1}^{1+\frac{d}{2}-2\alpha}\big) , \partial_{t}(  \chi_{j}\theta^{(n)})\in L^{2}\big((0,T);B_{2,1}^{1+\frac{d}{2}-2\alpha}\big).
\end{eqnarray*}
Utilizing the Aubin-Lions Lemma and the Aubin-Lions-Simon Lemma, we obtain
\begin{eqnarray}
     \chi_{j}u^{(n)}\rightarrow u \in L^{2}\big((0,T);B_{2,1}^{1+\frac{d}{2}-s_{1}}\big) \;\;\mbox{ for }\;\; s_{1}\in(0,2\alpha),
     \\
      \chi_{j}B^{(n)}\rightarrow B \in L^{2}\big((0,T);B_{2,1}^{1+\frac{d}{2}-s_{1}}\big) \;\;\mbox{ for }\;\; s_{2}\in(0,2\alpha),
     \\
      \chi_{j}\theta^{(n)}\rightarrow \theta \in L^{2}\big((0,T);B_{2,1}^{1+\frac{d}{2}-s_{3}}\big) \;\;\mbox{ for }\;\; s_{3}\in(\alpha,2\alpha).
\end{eqnarray}
Given that we are dealing with the entire space $\mathbb{R}^{d}$, it is necessary to combine Cantor's diagonal process to demonstrate that a subsequence of the weakly convergent sequence $( \chi_{j}u^{(n)},\chi_{j}B^{(n)},\chi_{j}\theta^{(n)})$ exists such that:
\begin{eqnarray}
    ( \chi_{j}u^{(n)},\chi_{j}B^{(n)},\chi_{j}\theta^{(n)}) \rightarrow (u,B,\theta) \;\;\mbox{ for }\;\; j \rightarrow 0.
\end{eqnarray}
As a result, we have the following strongly convergent property:
\begin{eqnarray}
     u^{(n)}\rightarrow u \in L^{2}\big((0,T);B_{2,1}^{1+\frac{d}{2}-s_{1}}\big) \;\;\mbox{ for }\;\; s_{1}\in(0,2\alpha),
     \\
     B^{(n)}\rightarrow B \in L^{2}\big((0,T);B_{2,1}^{1+\frac{d}{2}-s_{1}}\big) \;\;\mbox{ for }\;\; s_{2}\in(0,2\alpha),
     \\
     \theta^{(n)}\rightarrow \theta \in L^{2}\big((0,T);B_{2,1}^{1+\frac{d}{2}-s_{3}}\big) \;\;\mbox{ for }\;\; s_{3}\in(\alpha,2\alpha).
\end{eqnarray}
\section{Proof for the uniqueness part of theorem 1.1}
This section aims to prove the uniqueness part of Theorem 1.1. To this end, assume that $(u^{1},B^{1},\theta^{1})$ and $(u^{2},B^{2},\theta^{2})$ are two solutions of (\ref{sys1}). Their difference
$(\tilde{u},\Tilde{B},\Tilde{\theta})$ with
$$ \tilde{u}=u^{2}-u^{1}, ~~~~~~ \tilde{B}=B^{2}-B^{1}, ~~~~~~ \tilde{\theta}=\theta^{2}-\theta^{1}
$$
satisfies 
\begin{equation}\label{eq72}
\left\{
    \begin{aligned}
         & \partial_{t} \widetilde{u} +\mu (-\Delta)^{\alpha} \widetilde{u}  = -\mathbb{P}\bigg[u^{(1)}\cdot\nabla \widetilde{u} + \widetilde{u}\cdot \nabla u^{(2)}- B^{(1)}\cdot\nabla \widetilde{B} - \widetilde{B}\cdot \nabla B^{(2)}-\widetilde{\theta} e_{d} \bigg],
     \\
    & \partial_{t} \widetilde{B} +\eta (-\Delta)^{\beta}\widetilde{B} = -u^{(1)}\cdot \nabla \widetilde{B}-\widetilde{u}\cdot \nabla B^{(2)} + B^{(1)}\cdot \nabla \widetilde{u}+ \widetilde{B} \cdot \nabla u^{(2)},
     \\
   & \partial_{t} \widetilde{\theta} =- u^{(1)}\cdot \nabla \widetilde{\theta}-\widetilde{u}\cdot \nabla \theta^{(2)}+  \widetilde{u}\cdot e_{d},
  \\
&\Div{\widetilde{u}}=\Div{\widetilde{B}}=0,
\\
&(\widetilde{u},\widetilde{B},\widetilde{\theta} )\mid_{t=0} = 0,
    \end{aligned}
    \right.
\end{equation}
\begin{proof}
Let $j \geq 0 $ be an integer, applying $\Delta_{j} $ to (\ref{eq72})$_{1}$, and dotting the equation by $\Delta_{j} \widetilde{u} $ we find
\begin{equation*}
    \begin{aligned}
           \dfrac{1}{2} \dfrac{d}{dt}\lVert\Delta_{j} \widetilde{u}||_{L^{2}}^{2} +\mu \rVert \Lambda^{\alpha} \Delta_{j} \widetilde{u} \rVert_{L^{2}}^{2} &= - \int_{\mathbb{R}^{d}} \Delta_{j}(u^{(1)}\cdot \nabla \widetilde{u})\cdot \Delta_{j}\widetilde{u} dx - \int_{\mathbb{R}^{d}} \Delta_{j}(\widetilde{u}\cdot \nabla u^{(2)})\cdot \Delta_{j}\widetilde{u} dx
    \\
   & + \int_{\mathbb{R}^{d}} \Delta_{j}(B^{(1)}\cdot \nabla \widetilde{B})\cdot \Delta_{j}\widetilde{u} dx + \int_{\mathbb{R}^{d}} \Delta_{j}(\widetilde{B}\cdot \nabla B^{(2)})\cdot \Delta_{j}\widetilde{u} dx \\
   &+ \int_{\mathbb{R}^{d}} \Delta_{j} \widetilde{\theta} \cdot \Delta_{j} \widetilde{u} dx.
    \end{aligned}
\end{equation*}
By integrating in time and eliminating $\lVert\Delta_{j} \widetilde{u}||_{L^{2}}^{2} $ from both sides of the inequality, based on Lemma (\ref{lem2}), we can achieve the following:
\begin{equation}\label{eq73}
       \lVert\Delta_{j} \widetilde{u}||_{L^{2}}^{2} \leq
    C \int_{0}^{t} e^{-c_{0}2^{2 \alpha j}(t-\tau)}\bigg(E_{1}+E_{2}+ \lVert \Delta_{j}(\widetilde{u} \cdot \nabla u^{(2)} \rVert_{L^{2}}+ \lVert \Delta_{j}(\widetilde{B} \cdot \nabla B^{(2)} \rVert_{L^{2}} + \lVert \Delta_{j} \widetilde{\theta}\rVert_{L^{2}}\bigg),
\end{equation}
where
\begin{equation*}
    \begin{aligned}
        E_{1} &:= C \bigg( \lVert \Delta_{j} u^{(1)} \rVert_{L^{2}} \sum_{m \leq j-1} 2^{(1+\frac{d}{2})m} \lVert \Delta_{m} \widetilde{u} \rVert_{L^{2}}\\
        &+\lVert \Delta_{j} \widetilde{u} \rVert_{L^{2}} \sum_{m \leq j-1} 2^{(1+\frac{d}{2})m}\lVert \Delta_{m} u^{(1)} \rVert_{L^{2}}\\
        &+2^{(1+\frac{d}{2})j} \sum_{k \geq j-4} \lVert \Delta_{k} u^{(1)} \rVert_{L^{2}}\lVert \widetilde{\Delta}_{k} \widetilde{u} \rVert_{L^{2}}\bigg) 
    \end{aligned}
		and
\end{equation*}
\begin{equation*}
    \begin{aligned}
        E_{2} &:= C\bigg( 2^{j} \lVert \Delta_{j} \widetilde{B} \rVert_{L^{2}} \sum_{m \leq j-1} 2^{\frac{d}{2}m} \lVert \Delta_{m} B^{(1)} \rVert_{L^{2}}\\
        &+\lVert \Delta_{j} B^{(1)} \rVert_{L^{2}} \sum_{m \leq j-1} 2^{(1+\frac{d}{2})m}\lVert \Delta_{m} \widetilde{B} \rVert_{L^{2}}\\
        &+2^{(1+\frac{d}{2})j} \sum_{k \geq j-4} \lVert \Delta_{k} B^{(1)} \rVert_{L^{2}}\lVert \widetilde{\Delta}_{k} \widetilde{B} \rVert_{L^{2}}\bigg). 
    \end{aligned}
\end{equation*}
We have
\begin{equation*}
    \begin{aligned}
        \lVert \Delta_{j}(\widetilde{u}\cdot \nabla u^{(2)})\rVert_{L^{2}}
        &\leq C\bigg(2^{j}\lVert\Delta_{j}u^{(2)} \rVert_{L^{2}} \sum_{m \leq j-1 } 2^{\frac{d}{2}m} \lVert \Delta_{m} \widetilde{u}\rVert_{L^{2}}\\
        &+ \lVert \Delta_{j} \widetilde{u}\rVert_{L^{2}} \sum_{m \leq j-1} 2^{(1+\frac{d}{2})m} \lVert \Delta_{m} u^{(2)} \rVert_{L^{2}}\\
        &+ 2^{(1+\frac{d}{2})j} \sum_{k \geq j-4} \lVert \Delta_{k} \widetilde{u} \rVert_{L^{2}} \lVert \widetilde{\Delta}_{k} u^{(2)}\rVert_{L^{2}}\bigg)
    \end{aligned}
\end{equation*}
and
\begin{equation*}
    \begin{aligned}
        \lVert \Delta_{j}(\widetilde{B}\cdot \nabla B^{(2)})\rVert_{L^{2}}
        &\leq C\bigg(2^{j} \lVert \Delta_{j} B^{(2)} \rVert_{L^{2}} \sum_{m \leq j-1 } 2^{\frac{d}{2}m} \lVert \Delta_{m} \widetilde{B}\rVert_{L^{2}}\\
        &+ \lVert \Delta_{j} \widetilde{B}\rVert_{L^{2}} \sum_{m \leq j-1} 2^{(1+\frac{d}{2})m} \lVert \Delta_{m} B^{(2)} \rVert_{L^{2}}\\
        &+ 2^{(1+\frac{d}{2})j} \sum_{k \geq j-4} \lVert \Delta_{k} \widetilde{B} \rVert_{L^{2}} \lVert \widetilde{\Delta}_{k} B^{(2)}\rVert_{L^{2}}\bigg).
    \end{aligned}
\end{equation*}
Considering the $L^{p}$-norm in time for $1\leq p \leq \infty $ and Young's inequality for time convolution, we derive\begin{equation}\label{eq74}
    \begin{aligned}
         \lVert\Delta_{j} \widetilde{u}||_{L_{t}^{p}(L^{2})} &\leq
    C \lVert e^{-c_{0}2^{2 \alpha j}t}\rVert_{L^{p}}\bigg(\lVert E_{1}\rVert_{L_{t}^{1}}+ \lVert E_{2} \rVert_{L_{t}^{1}}+ \lVert \Delta_{j}(\widetilde{u} \cdot \nabla u^{(2)} \rVert_{L_{t}^{1}(L^{2})}\\
    &+ \lVert \Delta_{j}(\widetilde{B} \cdot \nabla B^{(2)} \rVert_{L_{t}^{1}(L^{2})} 
    + \lVert \Delta_{j} \widetilde{\theta}\rVert_{L_{t}^{1}(L^{2})}\bigg)
    \end{aligned}
\end{equation}
Multiplying $(\ref{eq74})$ by $2^{(\frac{d}{2}-2\alpha+\frac{2\alpha}{p})j}$ and taking the supremum with respect to $j$,  we get
\begin{equation*}
    \begin{aligned}
         \lVert\Delta_{j} \widetilde{u}||_{L_{t}^{p}\big(B_{2,\infty}^{\frac{d}{2}-2\alpha+\frac{2\alpha}{p}}\big)} &\leq
    C \sup_{j} 2^{(\frac{d}{2}-2\alpha)}\bigg(\lVert E_{1}\rVert_{L_{t}^{1}}+ \lVert E_{2} \rVert_{L_{t}^{1}}+ \lVert \Delta_{j}(\widetilde{u} \cdot \nabla u^{(2)} \rVert_{L_{t}^{1}(L^{2})}\\
    &+ \lVert \Delta_{j}(\widetilde{B} \cdot \nabla B^{(2)} \rVert_{L_{t}^{1}(L^{2})} + \lVert \Delta_{j} \widetilde{\theta}\rVert_{L_{t}^{1}(L^{2})}\bigg).
    \end{aligned}
\end{equation*}
For $j=-1$, the procedure is equivalent to deriving (\ref{eq47}). Estimates (\ref{eq9})-(\ref{eq15}) give
\begin{equation*}
    \begin{aligned}
    \sup_{j} 2^{(\frac{d}{2}-2\alpha)j} \lVert E_{1} \rVert_{L_{t}^{1}}&\leq C \lVert \widetilde{u} \rVert_{L_{t}^{\infty}\big(B_{2,\infty}^{\frac{d}{2}-2\alpha} \big)}    \lVert u^{(1)} \rVert_{L_{t}^{1}\big(B_{2,1}^{1+\frac{d}{2}} \big)} , 
\\
    \sup_{j} 2^{(\frac{d}{2}-2\alpha)j} \lVert E_{2} \rVert_{L_{t}^{1}}&\leq C \lVert \widetilde{B} \rVert_{L_{t}^{\infty}\big(B_{2,\infty}^{\frac{d}{2}-2\alpha} \big)}    \lVert B^{(1)} \rVert_{L_{t}^{1}\big(B_{2,1}^{1+\frac{d}{2}} \big)},  
\\
    \sup_{j} 2^{(\frac{d}{2}-2\alpha)j} \lVert \Delta_{j}(\widetilde{u}\cdot \nabla u^{(2)})\rVert_{L_{t}^{1}(L^{2}) }&\leq C \lVert \widetilde{u} \rVert_{L_{t}^{\infty}\big(B_{2,\infty}^{\frac{d}{2}-2\alpha} \big)}    \lVert u^{(2)} \rVert_{L_{t}^{1}\big(B_{2,1}^{1+\frac{d}{2}} \big)},  
\\
    \sup_{j} 2^{(\frac{d}{2}-2\alpha)j} \lVert \Delta_{j}(\widetilde{B}\cdot \nabla B^{(2)})\rVert_{L_{t}^{1}(L^{2}) }&\leq C \lVert \widetilde{B} \rVert_{L_{t}^{\infty}\big(B_{2,\infty}^{\frac{d}{2}-2\alpha} \big)}    \lVert B^{(2)} \rVert_{L_{t}^{1}\big(B_{2,1}^{1+\frac{d}{2}} \big)}. 
    \end{aligned}
\end{equation*}
We can estimate the third term and the final term as follows:
\begin{equation*}
    \sup_{j} 2^{(\frac{d}{2}-2\alpha)j} \lVert \Delta_{j}\widetilde{\theta} \rVert_{L_{t}^{1}(L^{2})}  \leq   C \lVert \widetilde{\theta} \rVert_{L_{t}^{1}\big(B_{2,\infty}^{\frac{d}{2}-2\alpha} \big)}. 
\end{equation*}
Then, taking $p= \infty,  p = 1$ and $p=2 $ in (\ref{eq74}), we derive
\begin{equation}\label{eq75}
    \begin{aligned}
            \lVert \widetilde{u} \rVert_{\widetilde{L}_{t}^{\infty}(B_{2,\infty}^{\frac{d}{2}-2\alpha})}+\lVert \widetilde{u} \rVert_{\widetilde{L}_{t}^{1}(B_{2,\infty}^{\frac{d}{2}})}+\lVert \widetilde{u} \rVert_{\widetilde{L}_{t}^{2}(B_{2,\infty}^{\frac{d}{2}-\alpha})}
            &\leq C \bigg( \lVert \big(u^{(1)},u^{(2)}\big )\rVert_{L_{t}^{1}\big(B_{2,1}^{1+\frac{d}{2}} \big)}  \lVert \widetilde{u} \rVert_{\widetilde{L}_{t}^{\infty}(B_{2,\infty}^{\frac{d}{2}-2\alpha})}
    \\
    &+\lVert \big(B^{(1)},B^{(2)}\big )\rVert_{L_{t}^{1}\big(B_{2,1}^{1+\frac{d}{2}} \big)}  \lVert \widetilde{B} \rVert_{\widetilde{L}_{t}^{\infty}(B_{2,\infty}^{\frac{d}{2}-2\alpha})}\\
    &+  \lVert \widetilde{\theta} \rVert_{{L}_{t}^{1}(B_{2,\infty}^{\frac{d}{2}-2\alpha})}\bigg).
    \end{aligned}
\end{equation}
Applying $\Delta_{j}$ to (\ref{eq72})$_{2}$, and dotting the equation by $\Delta_{j} \widetilde{B}$, we get
\begin{equation*}
    \begin{aligned}
         \dfrac{1}{2} \dfrac{d}{dt} \lVert \Delta_{j} \widetilde{B}\rVert_{L^{2}}^{2} + \eta \lVert \Lambda^{\alpha} \Delta_{j} \widetilde{B} \rVert_{L^{2}}^{2}
         &= - \int_{\mathbb{R^{d}}} \Delta_{j}(u^{(1)} \cdot \nabla \widetilde{B})\cdot \Delta_{j} \widetilde{B} dx - \int_{\mathbb{R^{d}}} \Delta_{j}(\widetilde{u} \cdot \nabla B^{(2)})\cdot \Delta_{j} \widetilde{B} dx
    \\ 
  &  + \int_{\mathbb{R^{d}}} \Delta_{j}(B^{(1)} \cdot \nabla \widetilde{u})\cdot \Delta_{j} \widetilde{B} dx +
    \int_{\mathbb{R^{d}}} \Delta_{j}(\widetilde{B} \cdot \nabla u^{(2)})\cdot \Delta_{j} \widetilde{B} dx.  
    \end{aligned}
\end{equation*}
Through analogous reasoning to deriving (\ref{eq73}), we conclude 
\begin{equation*}
       \lVert\Delta_{j} \widetilde{B}||_{L^{2}}^{2} \leq
    C \int_{0}^{t} e^{-c_{1}2^{2 \alpha j}(t-\tau)}\bigg(F_{1}+F_{2}+ \lVert \Delta_{j}(\widetilde{u} \cdot \nabla B^{(2)} )\rVert_{L^{2}}+ \lVert \Delta_{j}(\widetilde{B} \cdot \nabla u^{(2)} \rVert_{L^{2}} \bigg),
\end{equation*}
where
\begin{equation*}
    \begin{aligned}
        F_{1} &:= C\Bigg( \lVert \Delta_{j} u^{(1)} \rVert_{L^{2}} \sum_{m \leq j-1} 2^{(1+\frac{d}{2})m} \lVert \Delta_{m} \widetilde{B} \rVert_{L^{2}}\\
        &+\lVert \Delta_{j} \widetilde{B} \rVert_{L^{2}} \sum_{m \leq j-1} 2^{(1+\frac{d}{2})m}\lVert \Delta_{m} u^{(1)} \rVert_{L^{2}}\\
        &+2^{(1+\frac{d}{2})j} \sum_{k \geq j-4} \lVert \Delta_{k} u^{(1)} \rVert_{L^{2}}\lVert \widetilde{\Delta}_{k} \widetilde{B} \rVert_{L^{2}}\Bigg)
    \end{aligned}
\end{equation*}
and
\begin{equation*}
    \begin{aligned}
        F_{2} &= C\Bigg( 2^{j} \lVert \Delta_{j} \widetilde{u} \rVert_{L^{2}} \sum_{m \leq j-1} 2^{\frac{d}{2}m} \lVert \Delta_{m} B^{(1)} \rVert_{L^{2}}\\
        &+\lVert \Delta_{j} B^{(1)} \rVert_{L^{2}} \sum_{m \leq j-1} 2^{(1+\frac{d}{2})m}\lVert \Delta_{m} \widetilde{u} \rVert_{L^{2}}\\
        &+2^{(1+\frac{d}{2})j} \sum_{k \geq j-4} \lVert \Delta_{k} B^{(1)} \rVert_{L^{2}}\lVert \widetilde{\Delta}_{k} \widetilde{u} \rVert_{L^{2}}\Bigg).
    \end{aligned}
\end{equation*}
So,
\begin{equation*}
    \begin{aligned}
        \lVert \Delta_{j}(\widetilde{u}\cdot \nabla B^{(2)})\rVert_{L^{2}}
        &\leq C\bigg(2^{j}\lVert\Delta_{j}B^{(2)} \rVert_{L^{2}} \sum_{m \leq j-1 } 2^{\frac{d}{2}m} \lVert \Delta_{m} \widetilde{u}\rVert_{L^{2}}\\
        &+ \lVert \Delta_{j} \widetilde{u}\rVert_{L^{2}} \sum_{m \leq j-1} 2^{(1+\frac{d}{2})m} \lVert \Delta_{m} B^{(2)} \rVert_{L^{2}}\\
        & + 2^{(1+\frac{d}{2})j} \sum_{k \geq j-4} \lVert \Delta_{k} \widetilde{u} \rVert_{L^{2}} \lVert \widetilde{\Delta}_{k} B^{(2)}\rVert_{L^{2}}\bigg).
    \end{aligned}
\end{equation*}
Also,
\begin{equation*}
    \begin{aligned}
        \lVert \Delta_{j}(\widetilde{B}\cdot \nabla u^{(2)})\rVert_{L^{2}}
        &\leq C\bigg(2^{j} \lVert \Delta_{j} u^{(2)} \rVert_{L^{2}} \sum_{m \leq j-1 } 2^{\frac{d}{2}m} \lVert \Delta_{m} \widetilde{B}\rVert_{L^{2}}\\
        &+ \lVert \Delta_{j} \widetilde{B}\rVert_{L^{2}} \sum_{m \leq j-1} 2^{(1+\frac{d}{2})m} \lVert \Delta_{m} u^{(2)} \rVert_{L^{2}}\\
        &+ 2^{(1+\frac{d}{2})j} \sum_{k \geq j-4} \lVert \Delta_{k} \widetilde{B} \rVert_{L^{2}} \lVert \widetilde{\Delta}_{k} u^{(2)}\rVert_{L^{2}}\bigg)
    \end{aligned}
\end{equation*}
Taking the $L^{p}$-norm in time for $1\leq p \leq \infty $ and applying Young’s inequality for the time convolution, we yield
\begin{equation}\label{eq76}
        \lVert\Delta_{j} \widetilde{B}||_{L_{t}^{p}(L^{2})} \leq
    C \lVert e^{-c_{1}2^{2 \alpha j}t}\rVert_{L^{p}}\bigg(\lVert F_{1}\rVert_{L_{t}^{1}}+ \lVert F_{2} \rVert_{L_{t}^{1}}+ \lVert \Delta_{j}(\widetilde{u} \cdot \nabla B^{(2)} \rVert_{L_{t}^{1}(L^{2})}+ \lVert \Delta_{j}(\widetilde{B} \cdot \nabla u^{(2)} \rVert_{L_{t}^{1}(L^{2})} \bigg).
\end{equation}
Multiplying (\ref{eq76})by $2^{(\frac{d}{2}-2\alpha+\frac{2\alpha}{p})j}$ and taking the supremum with respect to $j$ for $j=-1$ the method is same as deriving (\ref{eq47}), we get
\begin{equation}\label{eq77}
      \lVert \widetilde{B}||_{L_{t}^{p}\big(B_{2,\infty}^{\frac{d}{2}-2\alpha+\frac{2\alpha}{p}}\big)} \leq
    C \sup_{j} 2^{(\frac{d}{2}-2\alpha)j}\bigg(\lVert F_{1}\rVert_{L_{t}^{1}}+ \lVert F_{2} \rVert_{L_{t}^{1}}+ \lVert \Delta_{j}(\widetilde{u} \cdot \nabla B^{(2)} \rVert_{L_{t}^{1}(L^{2})}+ \lVert \Delta_{j}(\widetilde{B} \cdot \nabla u^{(2)} \rVert_{L_{t}^{1}(L^{2})}\bigg ) 
\end{equation}
The estimates (\ref{eq23})–(\ref{eq28}) illustrate that:
\begin{equation*}
    \begin{aligned}
   \sup_{j} 2^{(\frac{d}{2}-2\alpha)j} \lVert F_{1} \rVert_{L_{t}^{1}}&\leq C \lVert \widetilde{B} \rVert_{L_{t}^{\infty}\big(B_{2,\infty}^{\frac{d}{2}-2\alpha} \big)}    \lVert u^{(1)} \rVert_{L_{t}^{1}\big(B_{2,1}^{1+\frac{d}{2}} \big)},  
\\
    \sup_{j} 2^{(\frac{d}{2}-2\alpha)j} \lVert F_{2} \rVert_{L_{t}^{1}}&\leq C \lVert \widetilde{u} \rVert_{L_{t}^{\infty}\big(B_{2,\infty}^{\frac{d}{2}-2\alpha} \big)}    \lVert B^{(1)} \rVert_{L_{t}^{1}\big(B_{2,1}^{1+\frac{d}{2}} \big)},  
\\
    \sup_{j} 2^{(\frac{d}{2}-2\alpha)j} \lVert \Delta_{j}(\widetilde{u}\cdot \nabla B^{(2)})\rVert_{L_{t}^{1}(L^{2}) }&\leq C \lVert \widetilde{u} \rVert_{L_{t}^{\infty}\big(B_{2,\infty}^{\frac{d}{2}-2\alpha} \big)}    \lVert B^{(2)} \rVert_{L_{t}^{1}\big(B_{2,1}^{1+\frac{d}{2}} \big),}  
\\
    \sup_{j} 2^{(\frac{d}{2}-2\alpha)j} \lVert \Delta_{j}(\widetilde{B}\cdot \nabla u^{(2)})\rVert_{L_{t}^{1}(L^{2}) }&\leq C \lVert \widetilde{B} \rVert_{L_{t}^{\infty}\big(B_{2,\infty}^{\frac{d}{2}-2\alpha} \big)}    \lVert u^{(2)} \rVert_{L_{t}^{1}\big(B_{2,1}^{1+\frac{d}{2}} \big)} . 
    \end{aligned}
\end{equation*}
Taking $p= \infty,  p = 1$ and $p=2 $ in (\ref{eq77}), we obtain
\begin{equation*}
    \begin{aligned}
           \lVert \widetilde{B} \rVert_{\widetilde{L}_{t}^{\infty}(B_{2,\infty}^{\frac{d}{2}-2\alpha})}+\lVert \widetilde{B} \rVert_{\widetilde{L}_{t}^{1}(B_{2,\infty}^{\frac{d}{2}})}+\lVert \widetilde{B} \rVert_{\widetilde{L}_{t}^{2}(B_{2,\infty}^{\frac{d}{2}-\alpha})}
           &\leq C \bigg( \lVert \big(B^{(1)},B^{(2)}\big )\rVert_{L_{t}^{1}\big(B_{2,1}^{1+\frac{d}{2}} \big)}  \lVert \widetilde{u} \rVert_{\widetilde{L}_{t}^{\infty}(B_{2,\infty}^{\frac{d}{2}-2\alpha})}\\
    &+\lVert \big(u^{(1)},u^{(2)}\big )\rVert_{L_{t}^{1}\big(B_{2,1}^{1+\frac{d}{2}} \big)}  \lVert \widetilde{B} \rVert_{\widetilde{L}_{t}^{\infty}(B_{2,\infty}^{\frac{d}{2}-2\alpha})}\bigg),
    \end{aligned}
\end{equation*}
\begin{equation}\label{eq78}
        C \lVert (u^{(1)},u^{(2)},B^{(1)},B^{(2)})\rVert_{L_{t}^{1}(B_{2,1}^{1+\frac{d}{2}})} \leq \dfrac{1}{4}.
\end{equation}
Thus, from (\ref{eq75}), (\ref{eq77}) and (\ref{eq78}), we have
\begin{equation}\label{eq79}
       \lVert \widetilde{B} \rVert_{\widetilde{L}_{t}^{1}(B_{2,\infty}^{\frac{d}{2}})}+ \lVert \widetilde{u} \rVert_{\widetilde{L}_{t}^{1}(B_{2,\infty}^{\frac{d}{2}})}+\lVert \widetilde{u} \rVert_{\widetilde{L}_{t}^{2}(B_{2,\infty}^{\frac{d}{2}-\alpha})}\leq c \lVert \widetilde{\theta} \rVert_{{L}_{t}^{1}(B_{2,\infty}^{\frac{d}{2}-2\alpha})}.
\end{equation}
Now, we estimate $\lVert \widetilde{\theta} \rVert_{{L}_{t}^{1}(B_{2,\infty}^{\frac{d}{2}-2\alpha})}.$ Applying $ \Delta_{j}$ to (\ref{eq72})$_{3}$
and dotting the equation by $\Delta_{j}\widetilde{\theta}$, we derive
\begin{equation*}
    \dfrac{1}{2} \dfrac{d}{dt} \lVert \Delta_{j} \widetilde{\theta}\rVert_{L^{2}}^{2} = - \int_{\mathbb{R^{d}}} \Delta_{j}(u^{(1)} \cdot \nabla \widetilde{\theta})\cdot \Delta_{j} \widetilde{\theta} dx - \int_{\mathbb{R^{d}}} \Delta_{j}(\widetilde{u} \cdot \nabla \theta^{(2)})\cdot \Delta_{j} \widetilde{\theta} dx
    + \int_{\mathbb{R^{d}}} \Delta_{j}(\widetilde{u} \cdot e_{d})\cdot \Delta_{j} \widetilde{\theta} dx. 
\end{equation*}
After subtracting $\lVert \Delta_{j} \widetilde{\theta}\rVert_{L^{2}}^{2} $ from both sides of the inequality and integrating in time in accordance with Lemma (\ref{lem2}), we acquire
 \begin{equation}\label{eq80}
       \lVert \Delta_{j} \widetilde{\theta} \rVert_{L^{2}} \leq C \int_{0}^{t} \big( G + \lVert \Delta_{j}(\widetilde{u}\cdot \nabla \theta^{(2)})\rVert_{L^{2}}+ \lVert \Delta_{j} \widetilde{u} \rVert_{L^{2}}\big) d\tau
 \end{equation}
where
\begin{equation*}
    \begin{aligned}
          G
          &:= C\bigg( \lVert \Delta_{j} u^{(1)} \rVert_{L^{2}} \sum_{m \leq j-1} 2^{(1+\frac{d}{2})m} \lVert \Delta_{m} \widetilde{\theta} \rVert_{L^{2}}\\
          &+\lVert \Delta_{j} \widetilde{\theta} \rVert_{L^{2}} \sum_{m \leq j-1} 2^{(1+\frac{d}{2})m}\lVert \Delta_{m} u^{(1)} \rVert_{L^{2}}\\
          &+2^{(1+\frac{d}{2})j} \sum_{k \geq j-4} \lVert \Delta_{k} u^{(1)} \rVert_{L^{2}}\lVert \widetilde{\Delta}_{k} \widetilde{\theta} \rVert_{L^{2}}\bigg). 
    \end{aligned}
\end{equation*}
So,
\begin{equation*}
    \begin{aligned}
        \lVert \Delta_{j}(\widetilde{u}\cdot \nabla \theta^{(2)})\rVert_{L^{2}}
        &\leq C\bigg(2^{j}\lVert\Delta_{j}\theta^{(2)} \rVert_{L^{2}} \sum_{m \leq j-1 } 2^{\frac{d}{2}m} \lVert \Delta_{m} \widetilde{u}\rVert_{L^{2}}\\
        &+ \lVert \Delta_{j} \widetilde{u}\rVert_{L^{2}} \sum_{m \leq j-1} 2^{(1+\frac{d}{2})m} \lVert \Delta_{m} \theta^{(2)} \rVert_{L^{2}}\\
        &+ 2^{(1+\frac{d}{2})j} \sum_{k \geq j-4} \lVert \Delta_{k} \widetilde{u} \rVert_{L^{2}} \lVert \widetilde{\Delta}_{k} \theta^{(2)}\rVert_{L^{2}}\bigg)
    \end{aligned}
\end{equation*}
Multiplying (\ref{eq80}) by $2^{(\frac{d}{2}-2\alpha)j}$ and taking the supremum with respect to $j$, we obtain
\begin{equation}\label{eq81}
        \lVert \widetilde{\theta}\rVert_{B_{2,\infty}^{\frac{d}{2}-2\alpha} }\leq
    C \sup_{j} 2^{(\frac{d}{2}-2\alpha)j}\bigg(\lVert G\rVert_{L_{t}^{1}}+ \lVert \Delta_{j}(\widetilde{u} \cdot \nabla \theta^{(2)} \rVert_{L_{t}^{1}(L^{2})}+ \lVert \Delta_{j} \widetilde{u}  \rVert_{L_{t}^{1}(L^{2})}\bigg) 
\end{equation}
It is possible to estimate the three terms on the right as:
\begin{equation*}
    \begin{aligned}
    \sup_{j} 2^{(\frac{d}{2}-2\alpha)j} \lVert G \rVert_{L_{t}^{1}}&\leq C \lVert \widetilde{\theta} \rVert_{L_{t}^{\infty}\big(B_{2,\infty}^{\frac{d}{2}-2\alpha} \big)}    \lVert u^{(1)} \rVert_{L_{t}^{1}\big(B_{2,1}^{1+\frac{d}{2}} \big)},  
\\
    \sup_{j} 2^{(\frac{d}{2}-2\alpha)j} \lVert \Delta_{j}(\widetilde{u}\cdot \nabla \theta^{(2)})\rVert_{L_{t}^{1}(L^{2}) }&\leq C \sqrt{T}\lVert \widetilde{u} \rVert_{\widetilde{L}_{t}^{2}\big(B_{2,\infty}^{\frac{d}{2}-2\alpha} \big)}    \lVert \theta^{(2)} \rVert_{\widetilde{L}_{t}^{\infty}\big(B_{2,1}^{1+\frac{d}{2}-\alpha} \big)},  
\\
    \sup_{j} 2^{(\frac{d}{2}-2\alpha)j} \lVert \Delta_{j} \widetilde{u} \rVert_{L_{t}^{1}(L^{2}) }&\leq C   \lVert \widetilde{u} \rVert_{\widetilde{L}_{t}^{1}\big(B_{2,\infty}^{\frac{d}{2}} \big)}.  
    \end{aligned}
\end{equation*}
Inserting them into (\ref{eq81}), gives us
\begin{equation}\label{eq82}
        \lVert \widetilde{\theta}\rVert_{B_{2,\infty}^{\frac{d}{2}-2\alpha} }\leq  C \sqrt{T}\lVert \widetilde{u} \rVert_{\widetilde{L}_{t}^{2}\big(B_{2,\infty}^{\frac{d}{2}-2\alpha} \big)}    \lVert \theta^{(2)} \rVert_{\widetilde{L}_{t}^{\infty}\big(B_{2,1}^{1+\frac{d}{2}-\alpha}\big)}+C   \lVert \widetilde{u} \rVert_{\widetilde{L}_{t}^{1}\big(B_{2,\infty}^{\frac{d}{2}} \big)}.  
\end{equation}
We employ the Chemin-Lerner type Besov space and Lemma (\ref{lem4}) combining (\ref{eq79}) and (\ref{eq82}). The inequality of Gr\"onwall derives
\begin{equation*}
    \lVert \widetilde{B} \rVert_{\widetilde{L}_{t}^{1}(B_{2,\infty}^{\frac{d}{2}})}= \lVert \widetilde{u} \rVert_{\widetilde{L}_{t}^{1}(B_{2,\infty}^{\frac{d}{2}})}=\lVert \widetilde{u} \rVert_{\widetilde{L}_{t}^{2}(B_{2,\infty}^{\frac{d}{2}-\alpha})}=0.
\end{equation*}
Consequently, 
$$
\lVert \widetilde{\theta} \rVert_{{L}_{t}^{1}(B_{2,\infty}^{\frac{d}{2}})} =0\;\;\mbox{for all}\;\; t\in [0,T].
$$
\end{proof}
\section{Conclusion}
In this study, we investigated the existence and uniqueness of local weak solutions for the d-dimensional $(d \geq 2)$ fractional magnetic B\'enard system without thermal diffusion. Notably, although kinematic viscosity and magnetic diffusion are present, thermal diffusion is zero. The thought invites us to think about more issues concerning these constraints. Four open problems that we think may be important will be introduced:
\begin{equation}\label{sys3}
\left\{
\begin{aligned}
    & \partial_{t} u + u\cdot\nabla u  =- \mu (-\Delta)^{\alpha}u-\nabla p + B\cdot \nabla B +\theta e_{d}, \quad (x,t)\in \mathbb{R}^{d}\times (0,\infty),
     \\
    & \partial_{t} B + u\cdot\nabla B =- \eta (-\Delta)^{\beta}B + B\cdot \nabla u,\quad (x,t)\in \mathbb{R}^{d}\times (0,\infty),
     \\
   & \partial_{t}\theta + u\cdot\nabla\theta = u\cdot e_{d},\quad (x,t)\in \mathbb{R}^{d}\times (0,\infty),
  \\
&\Div{u}=0 ~~~~~\Div{B}=0\quad\quad (x,t)\in \mathbb{R}^{d}\times (0,\infty),
\\
& (u,B,\theta )\mid_{t=0} = (u_{0}, B_{0},\theta _{0}),\quad (x,t)\in \mathbb{R}^{d}\times (0,\infty).
\end{aligned}
\right.
\end{equation}
\section{Further questions}
We draw the reader's attention to the following outstanding issues as we wrap up our analysis. In any event, we are prepared to work together.\\
\textbf{Open problem 01:} Existence and Uniqueness of local weak solution of d-dimensional  Magnetic-Bénard system with  fractional dissipation in Besov Space
\begin{equation}
\left\{
\begin{aligned}
    & \partial_{t} u + u\cdot\nabla u  =- \mu (-\Delta)^{\alpha}u-\nabla p + B\cdot \nabla B +\theta e_{d},\quad (x,t)\in \mathbb{R}^{d}\times (0,\infty),
     \\
    & \partial_{t} B + u\cdot\nabla B =- \eta (-\Delta)^{\beta}B + B\cdot \nabla u,\quad (x,t)\in \mathbb{R}^{d}\times (0,\infty),
     \\
   & \partial_{t}\theta + u\cdot\nabla\theta =-\kappa(-\Delta)^{\gamma}\theta + u\cdot e_{d},\quad (x,t)\in \mathbb{R}^{d}\times (0,\infty),
  \\
&\Div{u}=0 ~~~~~\Div{B}=0,\quad (x,t)\in \mathbb{R}^{d}\times (0,\infty),
\\
& (u,B,\theta )\mid_{t=0} = (u_{0}, B_{0},\theta _{0}),\quad (x,t)\in \mathbb{R}^{d}\times (0,\infty).
\end{aligned}
\right.
\end{equation}
   It is noteworthy that every parameter is present in the first open problem. This requires determining if a local weak solution exists and is unique in Besov space, based on the parameters and outcomes that have been altered from Theorem \ref{Th1}.\\
	\textbf{Open problem 02:} Existence and uniqueness of  the non-resistive  Magnetic-Bénard system With  fractional dissipation in Besov Space
\begin{equation}
\left\{
\begin{aligned}
    & \partial_{t} u + u\cdot\nabla u  =- \mu (-\Delta)^{\alpha}u-\nabla p + B\cdot \nabla B +\theta e_{d},\quad (x,t)\in \mathbb{R}^{d}\times (0,\infty),
     \\
    & \partial_{t} B + u\cdot\nabla B =
    B\cdot \nabla u  \quad\quad (x,t)\in \mathbb{R}^{d}\times (0,\infty)
     \\
   & \partial_{t}\theta + u\cdot\nabla\theta =-\kappa(-\Delta)^{\gamma}\theta + u\cdot e_{d},\quad (x,t)\in \mathbb{R}^{d}\times (0,\infty),
  \\
&\Div{u}=0 ~~~~~\Div{B}=0,\quad (x,t)\in \mathbb{R}^{d}\times (0,\infty),
\\
& (u,B,\theta )\mid_{t=0} = (u_{0}, B_{0},\theta _{0}),\quad (x,t)\in \mathbb{R}^{d}\times (0,\infty).
\end{aligned}
\right.
\end{equation}
We treat magnetic diffusion in the second open problem as zero, i.e., $\eta=0$, and assume the presence of both thermal and kinetic viscosity. This requires determining if a local weak solution exists and is unique in Besov space, based on the parameters and outcomes that have been altered from Theorem \ref{Th1}.\\
\textbf{Open problem 03:} Existence and uniqueness of local weak solution of d-dimensional Magnetic-B\'enard
system With  kinematic viscosity zero dissipation in Besov Space
\begin{equation}
\left\{
\begin{aligned}
    & \partial_{t} u + u\cdot\nabla u  =\nabla p + B\cdot \nabla B +\theta e_{d},\quad (x,t)\in \mathbb{R}^{d}\times (0,\infty),
     \\
   & \partial_{t} B + u\cdot\nabla B =- \eta (-\Delta)^{\beta}B + B\cdot \nabla u,\quad (x,t)\in \mathbb{R}^{d}\times (0,\infty),
     \\
   & \partial_{t}\theta + u\cdot\nabla\theta =-\kappa(-\Delta)^{\gamma}\theta + u\cdot e_{d},\quad (x,t)\in \mathbb{R}^{d}\times (0,\infty),
  \\
&\Div{u}=0 ~~~~~\Div{B}=0,\quad (x,t)\in \mathbb{R}^{d}\times (0,\infty),
\\
& (u,B,\theta )\mid_{t=0} = (u_{0}, B_{0},\theta _{0}),\quad (x,t)\in \mathbb{R}^{d}\times (0,\infty).
\end{aligned}
\right.
\end{equation}
   In open problem three, we consider kinematic viscosity to be zero, as are magnetic diffusion and thermal diffusion. This necessitates identifying the existence and uniqueness of a weak local solution. in Besov space, according to the conditions and results modified from Theorem \ref{Th1}.\\
\textbf{Open problem 04:} Existence and uniqueness of local weak solution of d-Ddmensional Magnetic-B\'enard
system with  kinematic viscosity zero dissipation and without thermal diffusion in Besov Space
\begin{equation}
\left\{
\begin{aligned}
    & \partial_{t} u + u\cdot\nabla u  =\nabla p + B\cdot \nabla B +\theta e_{d},\quad (x,t)\in \mathbb{R}^{d}\times (0,\infty),
     \\
   & \partial_{t} B + u\cdot\nabla B =- \eta (-\Delta)^{\beta}B + B\cdot \nabla u,\quad (x,t)\in \mathbb{R}^{d}\times (0,\infty),
     \\
   & \partial_{t}\theta + u\cdot\nabla\theta = u\cdot e_{d},\quad (x,t)\in \mathbb{R}^{d}\times (0,\infty),
  \\
&\Div{u}=0 ~~~~~\Div{B}=0,\quad (x,t)\in \mathbb{R}^{d}\times (0,\infty),
\\
& (u,B,\theta )\mid_{t=0} = (u_{0}, B_{0},\theta _{0}),\quad (x,t)\in \mathbb{R}^{d}\times (0,\infty).
\end{aligned}
\right.
\end{equation}
  In the final open problem, we analyze kinetic viscosity, and while thermal diffusion is zero, magnetic diffusion is present. This requires determining the presence and uniqueness of a local weak solution. The criteria and findings of Theorem \ref{Th1} are adjusted in Besov space.

\section*{Acknowledgements}
The authors thank the anonymous referees for their constructive criticism and suggestions.

\end{document}